\newcommand{\blue}[1]{{\textcolor{blue}{#1}\color{black}\xspace}}
\newtheorem{theorem}{Theorem}[section]
\newtheorem{cor}[theorem]{Corollary}
\newtheorem{lem}[theorem]{Lemma}
\newtheorem{prop}[theorem]{Proposition}
\newtheorem{rem}[theorem]{Remark}
\newtheorem{defn}[theorem]{Definition}
\newtheorem{hyp}{\bf Hypothesis}
\newcounter{subhyp}
\let\savedc@hyp\c@hyp
\newenvironment{subhyp}
 {\setcounter{subhyp}{0}%
  \stepcounter{hyp}%
  \edef\saved@hyp{\thehyp}% 
  \let\c@hyp\c@subhyp     %
  \renewcommand{\thehyp}{H\saved@hyp}%
 }
\newtheorem{hypbis}{\bf Hypothesis}
\newcounter{subhypbis}
\let\savedc@hypbis\c@hypbis
\newenvironment{subhypbis}
 {\setcounter{subhypbis}{0}%
  \stepcounter{hypbis}%
  \edef\saved@hypbis{\thehypbis}% 
  \let\c@hypbis\c@subhypbis     % 
  \renewcommand{\thehypbis}{H\saved@hypbis}%
 }
\newtheorem{primehyp}{\bf Hypothesis}
\newcounter{subprimehyp}
\let\savedc@primehyp\c@primehyp
\newenvironment{subprimehyp}
 {\setcounter{subprimehyp}{1}%
  \stepcounter{primehyp}%
  \edef\saved@primehyp{\theprimehyp}% 
  \let\c@primehyp\c@subprimehyp     % 
  \renewcommand{\theprimehyp}{H\saved@primehyp{'}}%
 }
\numberwithin{equation}{section}
\newcommand{\brac}[1]{\big[#1\big]}
\def\RR{\mathbb{R}}
\def\QQ{\mathbb{Q}}
\def\Ff{\mathcal{F}}
\def\qs{X_{s}^x}
\def\EE{\mathbb{E}}
\def\PP{\mathbb{P}}
\def\qn1{{X}_{t_{n+1}}}
\newcommand{\Dt}{\Delta t}
\newcommand{\dt}{\delta(t)}
\newcommand{\ds}{\delta(s)}
\newcommand{\Ld}{\overline{\mathcal{L}}}
\newcommand{\tabhead}[1]{\textbf{#1}}
\begin{document}

%--------------------------------------------------------------------------
\title{On the weak convergence rate of an exponential Euler scheme for SDEs governed  by coefficients with superlinear growth
}

\author[1]{Mireille Bossy\footnote{mireille.bossy@inria.fr}}

\author[2]{Jean-Fran\c{c}ois Jabir\footnote{ jjabir@hse.ru; The second author is supported by the Russian Academic Excellence Project '5-100'.}}

\author[3]{Kerlyns Mart\'inez\footnote{kerlynsmartinez@gmail.com; The third author acknowledges the support of the CONICYT National Doctoral scholarship N\textordmasculine 2116094.}}

\affil[1]{Universit{\'e} C{\^o}te d'Azur, Inria, France}

\affil[2]{School of Mathematics, University of Edinburgh, Scotland and 
National Research University Higher School of Economics, Moscow, Russia}

\affil[3]{Universidad de Valpara\'iso, Doctorate in Mathematics of Valpara\'iso, Chile}

\maketitle

\begin{abstract}
We consider the problem of the approximation of the solution of a one-dimensional SDE with non-globally Lipschitz drift and diffusion coefficients behaving as $x^\alpha$, with $\alpha>1$.
We propose an (semi-explicit) exponential-Euler scheme and study its convergence through its weak approximation error. To this aim,
we analyze the $C^{1,4}$ regularity of the solution of the associated backward Kolmogorov PDE using its Feynman-Kac representation and the flow derivative of the involved processes. From this, under some suitable hypotheses on the parameters of the model ensuring the control of its positive moments,  we recover a  rate of weak convergence of order one for the proposed exponential Euler scheme. Finally, numerical experiments are shown in order  to support and complement our theoretical result.
\medskip

\it{\paragraph{\it Keywords.}{Stochastic differential equation}, {Numerical scheme}, {Polynomial coefficients}, {Weak convergence}, {Rate of convergence}; \\ 
{MSC 2010 subject classifications. } 65C20. 60-08. 91G60.
}

\end{abstract}

\section{Introduction}\label{sec:intro}
%--------------------------------------------------------------------------

Within the extensive literature on the numerical analysis of  approximation schemes for Brownian-driven stochastic differential equations  with non-Lipschitz coefficients, existing convergence results mainly deal separately with the singularity hypothesis on the drift coefficient or on the diffusion coefficient. More rarely, the Lipschitz property is dropped for both coefficients.  In this paper, we  propose a  numerical scheme for  one-dimensional stochastic differential equations (SDEs for short) having non-globally Lipschitz, polynomial  drift and diffusion coefficients, and we analyze its  convergence  for the weak error. In this context, we present the first direct proof of  the weak convergence with rate one, accompanied by an  extendable methodology to analyze the $C^{1,4}$ regularity of the Feynman-Kac representation involving the exact process.

More precisely, we are interested in the numerical approximation of the solution to the following class of SDEs
\begin{equation}\label{eq:IntroSDE}
dX_t = b(X_t)dt + \sigma X_t^\alpha dW_t,~X_0=x>0,
\end{equation}
where $(W_t;0\leq t\leq T)$ is a standard Brownian motion on the probability space $(\Omega,\Ff,\PP)$ equipped with its natural filtration $(\Ff_t;\,0\leq t\leq T)$, and the power $\alpha$, characterizing the diffusion,  is assumed strictly  greater than one. The drift  $b:[0,+\infty)\rightarrow\RR$  is a locally Lipschitz continuous function allowing polynomial dependence in the Lipschitz constant (see Definition \ref{def:Lipschitz} below for a precise statement), and with a polynomial growth bound of the form:
\begin{equation}\label{eq:IntroHypo}
b(x)\leq B_1 x - B_2 x^{2\alpha-1}+b(0),~\forall x\in\RR^+,
\end{equation}
for some constants $B_1,B_2,b(0)\geq 0$.
Since exponent $\alpha$ can be non-integer, some particular hypotheses under which the SDE \eqref{eq:IntroSDE} has a unique positive solution have to be made (see Proposition~\ref{prop:XMoments}).

Convergence results in this particular setting of non-Lipschitz coefficients rarely  deal directly on the weak error analysis.
In this particular setting, Gy{\"o}ngy~\cite{Gyongy} obtained pathwise almost surely convergence, with a  convergence rate's order of at most $\tfrac{1}{4}$, for the classical  Euler-Maruyama scheme applied to SDEs with locally Lipschitz continuous drift and diffusion coefficients satisfying some Lyapunov condition. Such result immediately implies weak convergence for continuous bounded test functions but not $L^p$-strong convergence. Similarly, Higham, Mao and Stuart~\cite{Higham} established $L^2$-strong convergence of the classical Euler-Maruyama scheme for locally Lipschitz coefficients but  assuming a priori the control of some $p$\,th-moments ($p>2$) of the continuous solution of the SDE and of its approximation. A  rate of strong convergence of order $\tfrac{1}{2}$ was also established for the  time-implicit split-step backward Euler-Maruyama scheme, when the diffusion  is globally Lipschitz, and the drift  satisfies a one-sided Lipschitz condition and locally Lipschitz condition. Staying in the framework of the Euler-Maruyama scheme, Yan~\cite{Yan} obtained the weak convergence  with  diffusion and drift coefficients continuous only almost everywhere and having at most linear growth.

With superlinear growth coefficients, classical Euler-Maruyama scheme may present some degenerated behavior. Hutzenthaler, Jentzen and Kloeden~\cite{Kloeden} established the $L^p$-strong divergence, for $p\in[1,+\infty)$, related to the Euler-Maruyama scheme for SDEs with both drift and diffusion satisfying some superlinear growth condition.
In particular, the authors obtained the divergence of the  moments of the Euler approximation. Later in \cite{Kloeden2}, the authors proposed a time-explicit tamed-Euler scheme to overcome this \emph{divergence} problem of the Euler approximation, based on renormalized-increments to the scheme.
Recently Hutzenthaler and  Jentzen \cite{Jentzen-b} proved  the $\tfrac{1}{2}$ rate of the $L^p$-strong convergence for the tamed-Euler scheme for a family of SDE  that includes some locally Lipschitz cases for both  continuous drift  and diffusion coefficients.

In the same vein of explicit in time alternative scheme to the $L^p$-strongly divergent Euler-Maruyama scheme, Sabanis \cite{Sabanis} obtained the $L^p$-strong convergence for a scheme with renormalized coefficients under some superlinear growth condition, and recovered  the $\tfrac{1}{2}$-$L^p$-strong convergence rate under Lispchitz diffusion and one-sided global Lispchitz drift.

Other time-explicit numerical schemes have been proposed over the years to solve the approximation problem of SDEs with locally Lipschitz continuous coefficients. For instance,  Lamba, Mattingly and Stuart~\cite{Lamba} proposed an adaptive Euler algorithm based on the control of the drift coefficient, and proved the $L^2$-strong convergence assuming the control of some moments of the solution and of its approximation. Chassagneux, Jacquier and Mihaylov~\cite{Cha16}  considered the case of globally Lipschitz diffusion and locally-Lipschitz drift function satisfying a one-sided Lipschitz condition and  proposed a modified explicit Euler numerical scheme, for which, under suitable assumptions on the control of some moments of the solution, an $L^2$-strong convergence with explicit rate is proven.

Fewer results dealing with weak convergence are available. Milstein and Tretyakov~\cite{MilsteinTr} established a  weak convergence result  for a class of SDEs with non-globally Lipschitz coefficients, based on existing schemes with known rate of weak convergence for Lipschitz and smooth coefficients, and  on the rejection of the approximated trajectories that go out a sphere of given radius. But the  relation between the level of error, the radius of the rejection sphere and the time step threshold to be used in order  to observe the convergence is not explicit, making the algorithm difficult to use in practice.

In this paper, we propose a new scheme for SDEs with smooth coefficients under some superlinear growth condition.  The scheme is designed to ease  the upper bound control of some moments of the approximated process  and we prove the optimal convergence rate of order one for the weak error.
The convergence analysis extends the methodology introduced in Bossy and Diop~\cite{BD15} to establish the regularity of the associated backward Kolmogorov PDE.

\subsubsection*{Our motivating problem}

Our interest for the numerical approximation  of \eqref{eq:IntroSDE} was  initially motivated by the simulation/calibration  problem for the {\it instantaneous turbulent kinetic energy model} issued from the Lagrangian description of a non inertial particle dynamics within a turbulent fluid flow (see \cite[Chapter 1]{kerlyns}). Such model can be described by a SDE  having the prototype form:
\begin{equation}\label{eq:Generalized CIR}
dX_t = -B\;X_t^{2\alpha-1}dt+\sigma X_t^\alpha dW_t, \;X_0=x>0,
\end{equation}
where $\alpha>1$. To your knowledge, no weak convergence rate for this model are available. Only strong convergence results are proposed.
Equation \eqref{eq:Generalized CIR} is a particular case of \eqref{eq:IntroSDE} and can be seen as a generalized Constant Elasticity of Variance (CEV) model  (see e.g. Delbaen and Sirakawa~\cite{Delbaen}).   In particular, the transformation $r_t=\tfrac{X_t^{2(1-\alpha)}}{4\sigma^2(\alpha-1)^2}$ applied to the solution of \eqref{eq:Generalized CIR} produces the so-called CIR process (Cox, Ingersoll and Ross~\cite{CIR}) classically used for modeling short interest rate dynamics, and for which various schemes have been considered over the years. For the $L^p$-strong convergence of some  proposed explicit schemes for CEV models,  we refer to Bossy and Olivero \cite{B17} and the reference therein; for implicit proposed schemes, we refer to Dereich, Neuenkirch and Szpruch~\cite{DNS11}, Alfonsi~\cite{Alfonsi} and the references therein.

Alternatively, the transformation $Y_t=\tfrac{X_t^{(1-\alpha)}}{\sigma(\alpha-1)}$ produces a Bessel process for which we can use an Explicit Projected Euler scheme proposed in~\cite{Cha16}, obtaining a strong rate of convergence of order $\frac{1}{6}$ provided that we control up to the $4(\alpha-1)$-th moments of the process $(X_t;0\leq t\leq T)$ (or higher rate of convergence by controlling higher moments).

\subsubsection*{Exponential scheme}

The keyword {\it exponential scheme} refers to generic semi-linear integration methods and is of main importance in numerical analysis.
Methods for ODEs proposing integration schemes based on the semi-linear integration of equations are classics (see e.g. Pope \cite{Pope-da-63}, Hochbruck and Ostermann \cite{HocOst-10} and the references therein).  Extend this methodology for SDEs is straightforward (particularly in dimension one where affine diffusions allow exact scheme), but establishing  the weak rate of convergence results  in the context of non-globally Lipschitz coefficients is much more demanding.
With the same appellation,  semi-linear integration methods are proposed for PDEs or SPDEs and concern schemes based on a mild formulation of the underlying equations  (see e.g. Beccari {\it et al} \cite{Jentzen-a} for SPDE problems with superlinear coefficients).

We would like to stress out that we were  looking for a scheme that, potentially applied to prototype model \eqref{eq:Generalized CIR}, allows weak convergence rate of order one to set up an efficient calibration method for the model.  Motivated by this problem  for the model \eqref{eq:Generalized CIR}, the requirement of  stability condition on the moments   brings us to the variant scheme \eqref{eq:IntroNumericScheme} below,  as a remedy for the  divergence problem of the Euler-Maruyama scheme, and an alternative to the tamed-Euler scheme (for which the weak  convergence rate of order one is not established).

The proposed numerical approximation, which will be referred, from now on, to as the  ex\-po\-nen\-tial-Euler  scheme (exp-ES, for short), originates  from rewriting the SDE \eqref{eq:IntroSDE} into
\[
dX_t = X_t \big( \tfrac{b(X_t)}{X_t}dt + \sigma {X_t^{\alpha-1}} dW_t\big),\quad X_0=x>0,
\]
and semi-linear integration produces, for an  homogeneous $N$-partition of the time interval $[0,T]$   with  time-step $\Delta t=t_{n+1}-t_n$, the approximation algorithm:
\begin{equation}\label{eq:IntroNumericScheme}
\overline{X}_{t_{n+1}}=\overline{X}_{t_n}\exp\{\sigma\overline{X}_{t_n}^{\alpha-1}\;(W_{t_{n+1}}-W_{t_n})+
 \big(\tfrac{b(\overline{X}_{t_n})-b^+(0)}{\overline{X}_{t_n}}-\frac{\sigma^2}2 \overline{X}_{t_n}^{2(\alpha-1)}\big)\Delta t\}+ b^+(0)\Delta t,
\end{equation}
(where $b^+(0)$ stands for $b(0)\vee 0$) that preserves the positiveness of the solution.
We refer the reader to Section \ref{weakerror}
for a detailed construction of  \eqref{eq:IntroNumericScheme}.

The exponential Euler scheme \eqref{eq:IntroNumericScheme} can  be applied to large family of SDEs with non-globally Lipschitz coefficients, having  strictly positive solution. The range of possible applications of our results includes some meaningful financial models such as the generalized CEV model, the non-linear mean reversion model (see e.g., Ait-Sahalia~\cite{Ait96}, Higham et al.~\cite{Spr}) and  the Chan-Karolyi-Longstaff-Sanders model~\cite{Chan} among others.

As it will be established later on, a main advantage of the exp-ES scheme is that it  preserves the control of the moments of the continuous model, assuming a superlinear growth condition on the drift coefficient (see  Proposition \ref{prop:Weak rate}  and Lemma \ref{leq:Schememoments}).

\subsubsection*{Weak convergence and $C^{1,4}$-regularity of the  Kolmogorov PDE associated to \eqref{eq:IntroSDE} } 

Our main result, stated in  Proposition \ref{prop:Weak rate}, exhibits an optimal theoretical rate of convergence of order one under hypotheses that are introduced in Section \ref{weakerror}, and for bounded $C^4$ test functions.

Although some space of improvement are identified, the hypotheses  in Section \ref{weakerror} are stated in order to balance the control moments of the exact and approximated processes  with the moments and exponential moments required  for the flow-derivative process used to establish the regularity of the Feynman-Kac formula. Indeed, the key point of the convergence rate analysis  is to estimate the regularity of the solution to the backward Kolmogorov PDE associated with the representation $\EE[f(X_T^x)]$, where $(X_t^x;0\leq t\leq T)$ denotes the flow of diffeomorphisms with initial condition $x>0$.

The technique presented in this paper for the analysis of the Kolmogorov PDE can be derived for a larger family of SDEs. Adapted from \cite{BD15} which was dealing with the particular situation where $\tfrac{1}{2} < \alpha <1$, this methodology allows to control the  successive derivatives of the Feynman-Kac  representation up to the order four,  by  bypassing the difficulty of deriving the flow process more than one time, through a change of measure technique  (see Sections \ref{sec:sketch} and \ref{sec:changemeasure} for dedicated results and details on this main point).

The paper is organized as follows. Conditions for the well-posedness of the generic SDE \eqref{eq:IntroSDE} as well as  on the finiteness of the positive, negative and exponential moments of its solution are stated in  Section \ref{sec:Analytical}.
In Section \ref{weakerror} we construct the exponential-Euler scheme
\eqref{eq:IntroNumericScheme} and we present our main  Proposition \ref{prop:Weak rate} on the weak rate of convergence when applied to model \eqref{eq:IntroSDE}  and when applied to model  \eqref{eq:Generalized CIR} (Corollary \ref{prop:Weak rateCor}).
Section \ref{sec:Numeric} presents some numerical experiments in order to show the effectiveness of the theoretical rate of convergence of the proposed exponential Euler scheme. We also compare exp-ES with the classical Symmetrized Euler scheme, the Symmetrized Milstein scheme, and the (stopped)-tamed scheme. Section \ref{sec:CauchyP} is devoted to the analysis  of the regularity of the backward Kolmogorov PDE (Proposition  \ref{prop:KolPDE}) and Section \ref{sec:Proofs}  presents the proof of the weak error estimate.

For additional comments on the presented results and proofs,   and complements on  numerical experiments related to this work, we refer to \cite[Chapters 1 and 2]{kerlyns}.

\subsection{Notation}\label{subsec:NotDef}
Throughout this paper, $T>0$ will refer to an arbitrary finite time horizon,  $C$ will denote a positive constant, possibly depending on the parameters of the dynamic, which may change from line to line. Any process $(Z_t, t\in [0,T])$ will be simply denoted $Z$. For any measurable function $f$, $x\mapsto f^+(x)$ denotes its positive part. 

For any $a,b\in\mathbb{R}$, $a\vee b$ and $a\wedge b$ denote respectively the maximum and minimum between $a$ and $b$. Given the non-negative discrete time-step parameter $\Delta t$, we set $\eta(t) =      \Delta t  \lfloor \frac{t}{\Delta t} \rfloor$, and $\dt = t - \eta(t)$. In order to shorten  the writing of some expressions, we will use $\EE^\beta[Z]$ for $\left(\EE[Z]\right)^\beta$ and $f^{(k)}$ for the $k$-th derivative (whenever $k>1$) of a function $f:\mathbb{R}\rightarrow\mathbb{R}$.

We introduce the notion of locally Lipschitz continuity property used in this paper from the
formalism previously used in \cite{Higham} and \cite{Cha16}. The following definition specifies power-dependencies involved in the local Lipschitz factor.
\begin{defn}\label{def:Lipschitz}
Let $f$ be a real-valued function and $\textsl{dom}(f)$ denotes  its definition domain. 
We say that $f$ is $(\overline{\gamma}, \underline{\gamma})$-locally Lipschitz if there exists a non-negative constant $C$, $\overline{\gamma}$ and $\underline{\gamma}$ such that
\begin{equation}\label{eq:defLocallyLips}
|f(x)-f(y)|\leq C\left(1+|x|^{\overline{\gamma}}+|y|^{\overline{\gamma}}+|x|^{-\underline{\gamma}}+|y|^{-\underline{\gamma}}\right)|x-y|,~\forall x,y\in \textsl{dom$(f)$-\{$0$\}}.
\end{equation}
When $\underline{\gamma}=0$, $f$ is said to be $\overline{\gamma}$-locally Lipschitz continuous and
\begin{equation}\label{eq:defLocallyLips1}
|f(x)-f(y)|\leq C\left(1+|x|^{\gamma}+|y|^{{\gamma}}\right)|x-y|,~\forall x,y\in \textsl{dom}(f).
\end{equation}
\end{defn}

With this definition, a Lipschitz function is $0$-locally Lipschitz, and it is included in the set of all $\gamma$-locally Lipschitz functions, $\gamma\geq 0$.

The following lemma formalize the straightforward link between the locally Lipschitz property of a function and its derivative.
\begin{lem}\label{lem:LocallyLipschitz}
Let $f$ be a real-valued function, continuously differentiable, with $f'$ being $(\overline{\beta},\underline{\beta})$-locally Lipschitz continuous in the sense of Definition \ref{def:Lipschitz}. Then, $f$ is $(\overline{\alpha},\underline{\alpha})$-locally Lipschitz continuous with $\overline{\alpha}\leq\overline{\beta} +1$, and  $\underline{\alpha}\leq \underline{\beta}$.
\end{lem}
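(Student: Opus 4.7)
The plan is to recover the Lipschitz estimate on $f$ from the pointwise bound on $f'$ via the fundamental theorem of calculus. Fix $x,y\in\textsl{dom}(f)\setminus\{0\}$ of the same sign, so that the closed segment $[\min(x,y),\max(x,y)]$ stays inside $\textsl{dom}(f)\setminus\{0\}$; this is the relevant case since in the paper $\textsl{dom}(f)=[0,+\infty)$ and only positive values arise. Then
\[
f(x)-f(y)=(x-y)\int_{0}^{1} f'\bigl(y+t(x-y)\bigr)\,dt,
\]
so it suffices to control $|f'(z)|$ uniformly for $z$ on this segment by a polynomial expression that factorizes through $|x|$ and $|y|$ with exponents $\overline{\beta}+1$ and $-\underline{\beta}$.

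First I would produce a global polynomial bound on $f'$. Applying Definition \ref{def:Lipschitz} to $f'$ with the fixed reference point $z_{0}=1$, which belongs to $\textsl{dom}(f)\setminus\{0\}$ in the setting of the paper, gives
\[
|f'(z)|\le |f'(1)|+C\bigl(3+|z|^{\overline{\beta}}+|z|^{-\underline{\beta}}\bigr)(|z|+1),\qquad z\neq 0.
\]
Expanding the product and absorbing the subpower terms $|z|$, $|z|^{\overline{\beta}}$ and $|z|^{1-\underline{\beta}}$ into $1+|z|^{\overline{\beta}+1}+|z|^{-\underline{\beta}}$ (the only nontrivial bound being $|z|^{1-\underline{\beta}}\le 1+|z|^{-\underline{\beta}}$ when $\underline{\beta}>1$, valid by splitting over $|z|\le 1$ and $|z|\ge 1$), one gets
\[
|f'(z)|\le C\bigl(1+|z|^{\overline{\beta}+1}+|z|^{-\underline{\beta}}\bigr).
\]

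Next I would specialize this bound to $z=y+t(x-y)$. Since $x,y$ share sign, $\min(|x|,|y|)\le |z|\le \max(|x|,|y|)$, so $|z|^{\overline{\beta}+1}\le C(|x|^{\overline{\beta}+1}+|y|^{\overline{\beta}+1})$ and, using $\min(|x|,|y|)^{-\underline{\beta}}=\max(|x|^{-\underline{\beta}},|y|^{-\underline{\beta}})\le |x|^{-\underline{\beta}}+|y|^{-\underline{\beta}}$, one has $|z|^{-\underline{\beta}}\le |x|^{-\underline{\beta}}+|y|^{-\underline{\beta}}$. Inserting these into the integral representation yields
\[
|f(x)-f(y)|\le C\bigl(1+|x|^{\overline{\beta}+1}+|y|^{\overline{\beta}+1}+|x|^{-\underline{\beta}}+|y|^{-\underline{\beta}}\bigr)|x-y|,
\]
which is exactly \eqref{eq:defLocallyLips} with $\overline{\alpha}=\overline{\beta}+1$ and $\underline{\alpha}=\underline{\beta}$.

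The only delicate point, and the step I would be most careful about, is that the integrand $f'(y+t(x-y))$ must remain well-defined and polynomially controlled on the whole segment; the singular factor $|z|^{-\underline{\beta}}$ in the bound on $f'$ forces the segment to avoid the origin, which is guaranteed precisely because $x$ and $y$ lie on the same side of $0$ in the paper's setting. All other computations reduce to the elementary monomial inequalities indicated above.
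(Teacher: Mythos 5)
Your proof is correct, and it is the natural argument: the paper states this lemma without proof (calling the link "straightforward"), and the intended reasoning is exactly what you wrote — the fundamental theorem of calculus along the segment joining $x$ and $y$, a polynomial bound on $|f'|$ obtained from its local Lipschitz property against a fixed reference point, and the elementary monomial comparisons (including the careful handling of $|z|^{1-\underline{\beta}}$ and the fact that the segment avoids the origin since the domain is contained in $[0,+\infty)$).
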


\section{Strong wellposedness for the solution to SDE \eqref{eq:IntroSDE}}\label{sec:Analytical}

Some sufficient conditions ensuring the strong well-posedness and control of moments of the solution to \eqref{eq:IntroSDE} are now exhibited.
Hereafter we assume the following hypotheses on the SDE~\eqref{eq:IntroSDE}:

\begin{subhyp}
\begin{hyp}\label{C2:H1}
$\alpha>1$, $\sigma>0$, and the (deterministic) initial condition $x>0$.
\end{hyp}
\end{subhyp}
\begin{subhyp}
\begin{hyp}\label{C2:H_loclip}
The drift $b$ is $2(\alpha-1)$-locally Lipschitz continuous (in the sense of Definition \ref{def:Lipschitz}) and $b(0)\geq0$.
\end{hyp}
\end{subhyp}
\begin{subhyp}
\begin{hyp}\label{C2:H_poly}
There exist some finite constants $B_i\geq0$, with $i=1,2$,  such that, for all $x\geq0$,
\begin{equation*}
b(x)\leq B_1 x -B_2 x^{2\alpha-1}+b(0).
\end{equation*}
\end{hyp}
\end{subhyp}

The proofs of the following proposition and lemmas are technical and by itself not directly in relation with the convergence analysis of our scheme. They are given in Appendix \ref{sec:appendix:proof_wellposedness}.
\begin{prop}\label{prop:XMoments}
Assume \ref{C2:H1}, \ref{C2:H_loclip} and \ref{C2:H_poly}. Then there exists a unique (strictly) positive strong  solution $X$  to the SDE \eqref{eq:IntroSDE}. In  addition, for all exponent $p$ such that $0\leq 2p\leq 1+\tfrac{2B_2}{\sigma^2}$, we have
\begin{align}\label{eq:moments_cont}
\sup_{t\in[0,T]}\EE\big[X_t^{2p}\big]\leq~C_p(1+ x^{2p}),
\end{align}
where the  non-negative constant $C_p$ may depend on $p$, but does not depend on $x$.
\end{prop}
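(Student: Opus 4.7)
The plan is to combine a standard localization argument with an Itô-based a priori moment estimate and a Feller-type argument to rule out attainment of the boundaries $0$ and $+\infty$.

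\textbf{Step 1 (local strong solution).} I would introduce the stopping times $\tau_n := \inf\{t \geq 0 : X_t \notin (1/n, n)\}$ for $n \geq 1$. On any interval $(1/n, n)$, the diffusion $x \mapsto \sigma x^\alpha$ is smooth and the drift $b$ is globally Lipschitz (by \ref{C2:H_loclip}), so the classical theory of SDEs with Lipschitz coefficients produces a pathwise unique strong solution defined up to $\tau_n \wedge T$. Consistency of the localized solutions in $n$ yields a well-defined process on $[0, \tau_\infty \wedge T)$, where $\tau_\infty := \lim_n \tau_n$.

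\textbf{Step 2 (a priori moment bound).} I would apply Itô's formula to $x \mapsto x^{2p}$ along $(X_{t \wedge \tau_n})_t$, obtaining
$$dX_t^{2p} = \bigl(2p\, X_t^{2p-1} b(X_t) + p(2p-1)\sigma^2 X_t^{2(p+\alpha-1)}\bigr)\,dt + 2p\sigma\, X_t^{2p+\alpha-1}\,dW_t.$$
Injecting the polynomial upper bound \ref{C2:H_poly} into the drift contribution produces a leading-order term $-p\bigl[2B_2 - (2p-1)\sigma^2\bigr] X_t^{2(p+\alpha-1)}$, whose coefficient is non-positive precisely by the hypothesis $2p \leq 1 + 2B_2/\sigma^2$. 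Dropping this non-positive term and bounding the sub-leading terms $2pB_1 X_t^{2p} + 2pb(0)X_t^{2p-1}$ by $C(1 + X_t^{2p})$ --- which for $2p < 1$ can be justified by first establishing the bound at $2p = 1$ and then invoking Jensen's inequality --- I would take expectations (the stochastic integral is a true martingale on $[0,t\wedge\tau_n]$ by boundedness) and apply Gronwall's lemma to obtain
$$\sup_{n}\sup_{t \in [0,T]} \EE\bigl[X_{t \wedge \tau_n}^{2p}\bigr] \leq C_p(1 + x^{2p}).$$

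\textbf{Step 3 (non-explosion, positivity, conclusion).} The uniform bound from Step 2 together with Markov's inequality prevents $X$ from hitting $n$ in finite time, so the upper part of $\tau_n$ diverges a.s. For non-attainment of $0$, I would exploit that $\alpha > 1$ forces $\sigma x^\alpha$ to vanish strictly faster than linearly at the origin: applying Itô to $X_t^{-q}$ for a well-chosen $q > 0$ (or to $-\log X_t$) and using the lower bound $b(x) \geq -C(x + x^{2\alpha-1})$ that follows from \ref{C2:H_loclip} together with $b(0) \geq 0$, one obtains a local supermartingale estimate on the negative-power process that rules out explosion of $X_t^{-q}$, and hence the non-attainment of $0$. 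Therefore $\tau_\infty = +\infty$ a.s., and Fatou's lemma applied to the bound of Step 2 yields \eqref{eq:moments_cont}.

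\textbf{Main obstacle.} I expect the genuinely delicate point to be the non-attainment of $0$ in the degenerate case $b(0) = 0$: the drift then does not actively repel the process from the origin, and one must rely on a sharp Feller-type criterion tailored to the exponent $\alpha > 1$ to obtain the supermartingale bound on negative powers of $X$. The remainder of the argument consists of careful but routine Itô--Gronwall manipulations.
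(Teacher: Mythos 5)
Your proposal is correct, but it reaches well-posedness by a different route than the paper. The moment estimate itself (Itô on $x^{2p}$ for $2p\geq 1$ along the stopped process, cancellation of the $X^{2p+2\alpha-2}$ term under $2p\leq 1+2B_2/\sigma^2$, Gronwall, then Jensen/H\"older to reach $2p<1$) is exactly the paper's argument. For existence, uniqueness and boundary behaviour, however, the paper does not localize in the original variable: it applies the Lamperti-type transformation $\widetilde{X}_t=X_t^{-2(\alpha-1)}$, which turns \eqref{eq:IntroSDE} into a CIR-like equation with square-root diffusion, gets pathwise uniqueness from the Yamada--Watanabe-type criterion of Ikeda--Watanabe, and excludes both boundaries of $\widetilde{X}$ by an explicit Feller test (scale-function and $v$-function estimates); this transformed picture is then reused for the negative and exponential moment bounds of Lemmas \ref{lem:Negative} and \ref{lem:Exponentials}. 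Your localization-plus-Lyapunov approach (positive moments to exclude explosion, negative powers $X^{-q}$ to exclude hitting $0$, with local Lipschitz uniqueness on $(1/n,n)$) is more elementary and equally valid; note also that the "main obstacle" you flag is not really one in your framework: since \ref{C2:H_loclip} and $b(0)\geq 0$ give $b(x)\geq -Cx(1+x^{2(\alpha-1)})$ whether or not $b(0)=0$, choosing $q\geq 2(\alpha-1)$ makes the Itô correction term $X^{2(\alpha-1)-q}\leq 1+X^{-q}$, so the drift of $X^{-q}$ is bounded by $C(1+X^{-q})$ and Gronwall plus Markov's inequality rules out attainment of $0$ with no Feller-test refinement needed (interestingly, it is in the paper's Feller-test route that $b(0)=0$ forces the finer analysis of $v(+\infty)$). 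What the paper's transformation buys, beyond well-posedness, is the direct comparison with a CIR process and uniform-in-order control of negative moments used later; what your route buys is a shorter, self-contained proof of this proposition alone.
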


\begin{rem}\label{rem:exponentialPrototype}

In all the results of this section, Assumption \ref{C2:H_loclip} can be weaken to a $\gamma$-locally Lipschitz property on $b$, provided that $\gamma\geq 2(\alpha-1).$ The limit case $\gamma=2(\alpha-1)$ is considered in order to simplify the computation.
\smallskip
The effect of the non-negative constants $B_1$ and $b(0)$ on the SDE \eqref{eq:IntroSDE} is to push the solution away from zero, while the constant $B_2$ enables to counter the growth of the solution due to the diffusion term. The assumption on $B_2 \geq 0$ can so be weaken into $B_2 \geq - \tfrac{\sigma^2}{2}$ in order to preserve the well-posedness of SDE \eqref{eq:IntroSDE}. However, in that case, only the moments of order strictly less than one are controlled.
\end{rem}

We complement the preceding proposition by the control of the negative and exponential moments.
\begin{lem}\label{lem:Negative}
Assume \ref{C2:H1}, \ref{C2:H_loclip}  and \ref{C2:H_poly}. Then, for all $q>0$, the solution $X$ to  \eqref{eq:IntroSDE} satisfies
\[\sup_{t\in[0,T]}\EE[X_t^{-q}]\leq C_{q}(1+x^{-q}),\]
where  the non-negative constant $C_{q}$ may depend on $q$, but does not depend on $x$.
\end{lem}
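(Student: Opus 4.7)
The plan is to apply It\^o's formula to $V(x) = x^{-q}$ along $X$ and to establish a Lyapunov-type bound for the associated infinitesimal generator, then to conclude by Gr\"onwall's lemma and a localization argument. A direct computation gives
\[
\mathcal{L}V(x) = -q\, x^{-q-1}\, b(x) + \tfrac{\sigma^2}{2}\, q(q+1)\, x^{2\alpha-q-2}.
\]
To handle the first term, I would combine Hypothesis \ref{C2:H_loclip} with $b(0)\geq 0$: the $2(\alpha-1)$-local Lipschitz property of $b$ applied at the point $0$ gives $|b(x)-b(0)|\leq C(1+x^{2(\alpha-1)})\,x$, whence $b(x)\geq -C\,x - C\,x^{2\alpha-1}$ on $\mathbb{R}_+$. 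Injecting this lower bound yields
\[
\mathcal{L}V(x) \leq qC\, x^{-q} + C'_q\, x^{2\alpha-q-2}.
\]

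To compare $x^{2\alpha-q-2}$ with $V(x)$, I would first restrict to $q\geq 2(\alpha-1)$. Writing $2\alpha-q-2=-(q+2-2\alpha)$, one has $0\leq q+2-2\alpha\leq q$ (the second inequality being equivalent to $\alpha\geq 1$), and a case split on $x\geq 1$ versus $0<x<1$ shows that $x^{2\alpha-q-2}\leq 1+x^{-q}$ on $(0,+\infty)$. This gives the Lyapunov inequality $\mathcal{L}V(x)\leq C_q(1+V(x))$. Then, introducing the stopping times $\tau_n=\inf\{t\geq 0 : X_t\notin(1/n,n)\}$, which converge a.s. to $+\infty$ thanks to the strict positivity and the moment bound of Proposition \ref{prop:XMoments}, I would apply It\^o's formula on $[0,t\wedge\tau_n]$ so that the stochastic integral is a true martingale, take expectations, and invoke Gr\"onwall's lemma to get a bound on $\mathbb{E}[V(X_{t\wedge\tau_n})]$ uniform in $n$. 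Fatou's lemma then produces the desired estimate for every $q\geq 2(\alpha-1)$.

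The remaining range $0<q<2(\alpha-1)$ is handled by Jensen's inequality: fixing any $q'\geq 2(\alpha-1)$ and applying the concave map $u\mapsto u^{q/q'}$ to the random variable $X_t^{-q'}$, one gets $\mathbb{E}[X_t^{-q}]\leq (\mathbb{E}[X_t^{-q'}])^{q/q'}$, and the elementary bound $(1+x^{-q'})^{q/q'}\leq 2^{q/q'}(1+x^{-q})$ transports the estimate to the exponent $q$.

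The main obstacle is the cross term $x^{2\alpha-q-2}$, whose sign and dominance relative to $V(x)$ depend on the range of $q$: for small $q$ it behaves as a positive moment (not controlled in general without extra restrictions on $B_2/\sigma^2$ through Proposition \ref{prop:XMoments}), whereas for large $q$ it is a smaller negative moment that can be absorbed into $V(x)$. This is precisely why the Lyapunov argument is first carried out for $q\geq 2(\alpha-1)$ and then extended to the full range by Jensen. A secondary, more technical point is to justify the martingale property of the stochastic integral on the localized intervals and the passage to the limit as $n\to\infty$.
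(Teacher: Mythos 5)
Your argument is correct. The paper proves this lemma by passing to the Lamperti transform $\widetilde{X}_t=X_t^{-2(\alpha-1)}$ (the same auxiliary process already used for well-posedness), applying It\^o's formula to $\widetilde{X}_t^{\,q}$, bounding the transformed drift $\widetilde b$ from above via the local Lipschitz property of $b$ and $b(0)\ge 0$, and closing with Gronwall; you instead apply It\^o directly to $V(x)=x^{-q}$ along $X$ and derive the Lyapunov inequality $\mathcal{L}V\le C_q(1+V)$ from the lower bound $b(x)\ge -Cx-Cx^{2\alpha-1}$. These are essentially the same computation in different coordinates, but your route forces the cross term $x^{2\alpha-q-2}$ into view, hence the restriction $q\ge 2(\alpha-1)$ followed by Jensen for small $q$ — a step the paper avoids because in the $\widetilde{X}$ variable every positive moment closes directly under Gronwall (after a Young-type balancing of $\widetilde{X}^{q-1}$ against $\widetilde{X}^{q}$). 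Your version is more self-contained and elementary; the paper's version pays for the transform up front but is then reused in Lemma \ref{lem:Exponentials}, where the explicit dependence of the constant on the moment order $k$ (needed to sum the exponential series) is extracted from the same transformed estimate. Your identification of the cross term as the real obstacle, the case split, the localization on $(1/n,n)$ making the stochastic integral a true martingale, and the Fatou passage are all sound; the only cosmetic imprecision is that $\tau_n\wedge T\uparrow T$ a.s. (the solution is only defined on $[0,T]$), which is what the strict positivity and non-explosion from Proposition \ref{prop:XMoments} actually give and is all that is needed.
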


\begin{lem}\label{lem:Exponentials}
Assume \ref{C2:H1}, \ref{C2:H_loclip},  and \ref{C2:H_poly}.
Then, when $b(0)=0$, there exists a constant $C$, independent on $x$, such that for all $\mu\leq\tfrac{(\sigma^2+2B_2)^2}{8\sigma^2}$,
\begin{equation}\label{ExpMoments0}
\sup_{t\in[0,T]}\EE\big[\exp\{\mu\int_0^tX_s^{2\alpha-2}ds\}\big]\leq\;C\;\left(1+x^{\frac12+\frac{B_2}{\sigma^2}}\right).
\end{equation}
Otherwise, when $b(0)>0$, if we assume in addition that $\alpha> \frac32$, then for all $\mu< B_2\sigma^2$,
\begin{equation}\label{ExpMoments}
\sup_{t\in[0,T]}\EE\big[\exp\{\mu\int_0^tX_s^{2\alpha-2}ds\}\big]\leq\;C(1+x^{\frac{2\mu}{\sigma^2}})~\left(1+\exp\{C\mu x^{-1}\}\right).
\end{equation}

In the above upper-bounds, the non-negative constant $C$ ca be bounded uniformly in $\mu$.
\end{lem}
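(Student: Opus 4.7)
My plan is to apply It\^o's formula to the process $Y_t=\phi(X_t)\exp(\mu\int_0^t X_s^{2\alpha-2}\,ds)$ for a carefully chosen positive $\phi$, and to close a Gronwall-type estimate after localisation at $\tau_n=\inf\{t\ge 0:X_t\notin(1/n,n)\}$. Writing the generator $\mathcal{L}\phi(x)=b(x)\phi'(x)+\tfrac{\sigma^2}{2}x^{2\alpha}\phi''(x)$, It\^o gives
\begin{equation*}
dY_t=\Big(\mathcal{L}\phi(X_t)+\mu X_t^{2\alpha-2}\phi(X_t)\Big)\exp\!\Big(\mu\int_0^t X_s^{2\alpha-2}\,ds\Big)\,dt+\text{(local mart.)},
\end{equation*}
so the strategy reduces to finding $\phi$ for which the drift is bounded above by $C\,\phi(X_t)\exp(\mu\int)$ plus lower order terms absorbable through Proposition~\ref{prop:XMoments} and Lemma~\ref{lem:Negative}.

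For the case $b(0)=0$, I would take $\phi(x)=x^\beta$. Using \ref{C2:H_poly} one finds $\mathcal{L}\phi(x)+\mu x^{2\alpha-2}\phi(x)\le\beta B_1\phi(x)+[\mu-\beta B_2+\tfrac{\sigma^2}{2}\beta(\beta-1)]x^{\beta+2\alpha-2}$. The quadratic $\beta\mapsto\beta B_2-\tfrac{\sigma^2}{2}\beta(\beta-1)$ attains its maximum $(\sigma^2+2B_2)^2/(8\sigma^2)$ at $\beta_\ast=\tfrac{1}{2}+B_2/\sigma^2$, so for $\mu$ within the admissible range and $\beta=\beta_\ast$ the square bracket is non-positive. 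Gronwall then delivers $\sup_{t\le T}\EE[X_t^{\beta_\ast}\exp(\mu\int_0^t X_s^{2\alpha-2}\,ds)]\le x^{\beta_\ast}e^{\beta_\ast B_1 T}$, and \eqref{ExpMoments0} follows by stripping the $X_t^{\beta_\ast}$ factor through a Cauchy--Schwarz estimate combined with the negative moment control of Lemma~\ref{lem:Negative}.

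For the case $b(0)>0$ and $\alpha>3/2$, the same test $\phi=x^\beta$ leaves an extra term $\beta b(0)x^{\beta-1}$ in $\mathcal{L}\phi$ which is singular at the origin and prevents closure. To compensate I would use the multiplicatively corrected test function $\phi(x)=x^\beta\exp(\kappa/x)$ with $\beta=2\mu/\sigma^2$ and $\kappa>0$ small enough. The derivative $\phi'(x)=(\beta x^{-1}-\kappa x^{-2})\phi(x)$ feeds, through $b\phi'$ and the locally Lipschitz lower bound $b(x)\ge b(0)-Cx-Cx^{2\alpha-1}$ (implied by \ref{C2:H_loclip} and $b(0)\ge 0$), a strongly negative contribution $-\kappa b(0)\phi(x)/x^2$. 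For $\kappa$ suitably tuned this negative contribution absorbs, near the origin, the $\beta b(0)\phi(x)/x$ term and the $\phi''$-induced pieces $\sigma^2\kappa(1-\beta)x^{\beta+2\alpha-3}+\sigma^2\kappa^2 x^{\beta+2\alpha-4}/2$; the assumption $\alpha>3/2$ is what allows these $\phi''$-contributions to be controlled uniformly. Meanwhile, $\mu<B_2\sigma^2$ ensures a non-positive coefficient on $x^{\beta+2\alpha-2}$. Once the drift inequality $\mathcal{L}\phi+\mu x^{2\alpha-2}\phi\le C\phi$ is proved, Gronwall yields $\EE[\phi(X_t)\exp(\mu\int)]\le\phi(x)e^{CT}$, which is precisely \eqref{ExpMoments}: the factors $1+x^{2\mu/\sigma^2}$ and $1+\exp(C\mu/x)$ correspond directly to the $x^\beta$ and $\exp(\kappa/x)$ blocks of $\phi$.

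The main obstacle will be the simultaneous calibration of $(\beta,\kappa)$ in case~(ii) so that the drift inequality closes uniformly in $\mu$ throughout the admissible interval, together with the verification that the localisation procedure preserves integrability when $b(0)>0$. The hypothesis $\alpha>3/2$ stands out as the structural threshold below which the singular contributions of $\phi''$ at $0$ cannot be simultaneously absorbed by the $-\kappa b(0)/x^2$ penalty and the negative moments of $X$ granted by Lemma~\ref{lem:Negative}.
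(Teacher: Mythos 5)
Your Lyapunov/supersolution strategy is a genuinely different route from the paper (which, for $b(0)=0$, passes through the Lamperti transform $\widetilde X=X^{-2(\alpha-1)}$, a comparison with a CIR process and a known exponential-moment lemma for CIR, and, for $b(0)>0$, applies It\^o to $\log X$, a three-way H\"older split with a supermartingale exponential factor, and a series expansion of $\exp\{\theta\int_0^t X_s^{-1}ds\}$ controlled by quantitative negative-moment bounds, where $\alpha>\tfrac32$ enters through a ratio test). However, as sketched it has a concrete gap in case (i), precisely at the step you call ``stripping''. Writing $g(\beta)=\beta B_2-\tfrac{\sigma^2}{2}\beta(\beta-1)$, your drift inequality requires $p\mu\le g(p\beta_*)$ for the weighted exponential at level $p\mu$; but $g(2\beta_*)=2\beta_*\big(B_2-\tfrac{\sigma^2}{2}(2\beta_*-1)\big)=0$, so the Cauchy--Schwarz step would need $2\mu\le 0$ and fails for every $\mu>0$ (indeed $\EE[\exp\{2\mu\int_0^t X_s^{2\alpha-2}ds\}]$ is not expected to be finite once $2\mu$ exceeds the critical threshold, so no such doubling argument can work). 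One can repair this by H\"older with conjugate exponents $(p,q)$, $p=\mu_*/\mu$, $\beta=\beta_*/p$, together with Lemma \ref{lem:Negative}, but this only covers $\mu<\mu_*:=\tfrac{(\sigma^2+2B_2)^2}{8\sigma^2}$ strictly, and the constant degenerates as $\mu\uparrow\mu_*$ (since $q\to\infty$ and the negative-moment constant $C_{\beta q}^{1/q}$ blows up), whereas the lemma includes $\mu=\mu_*$ and asserts a constant bounded uniformly in $\mu$. So the unweighted bound \eqref{ExpMoments0} at and near the critical value is not reached by your argument; this is exactly what the CIR comparison in the paper's proof delivers.

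In case (ii) the structure is sounder than you fear on the stripping side: $\phi(x)=x^{\beta}e^{\kappa/x}$ is bounded below by $(\kappa/\beta)^{\beta}e^{\beta}>0$, so once the drift inequality closes, the unweighted bound follows trivially, and the factors $x^{2\mu/\sigma^2}$ and $e^{C\mu/x}$ do come out as you say. The real problem is the admissible range of $\mu$: with $\beta=2\mu/\sigma^2$ the coefficient of the leading term $x^{2\alpha-2}\phi(x)$ is $\mu-\beta B_2+\tfrac{\sigma^2}{2}\beta(\beta-1)=\tfrac{2\mu}{\sigma^2}(\mu-B_2)$, so your inequality closes only for $\mu\le B_2$, and even optimizing the power only reaches $\mu\le\mu_*$; both can be strictly smaller than the stated range $\mu<B_2\sigma^2$ when $\sigma^2>1$ (e.g.\ $B_2=1$, $\sigma^2=2$ gives $\mu_*=\tfrac{25}{16}<2=B_2\sigma^2$). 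Hence the proposal does not prove the lemma in the full stated generality; the paper's proof avoids this because the quadratic-variation balance is handled through the exponential-martingale/H\"older argument rather than through a polynomial Lyapunov weight. Finally, your assertion that $\alpha>\tfrac32$ is the structural threshold for absorbing the $\phi''$ singularities is not borne out by your own computation (near $0$ the terms $x^{2\alpha-4}$, $x^{2\alpha-3}$ are dominated by $-\kappa b(0)x^{-2}\phi$ for any $\alpha>1$); in the paper that hypothesis arises from the convergence of the series of negative moments, a point your sketch does not touch.
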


\section{The exponential Euler scheme and its rate of  convergence}\label{weakerror}

Given the possible non-integer power value for $\alpha$ in the diffusion term,  we seek for an appropriate numerical approximation preserving the positiveness of the process and exponential form is a good candidate for this purpose.
By rewriting the SDE \eqref{eq:IntroSDE} as
\[
dX_t = X_t\Big(\tfrac{b(X_t)}{X_t}dt+\sigma X^{\alpha-1}dW_t,\Big),~~ X_0=x>0,
\]
and given $\{t_0=0,t_1,\ldots,t_{N-1},t_N=T\}$, a $N$-partition of the time interval $[0,T]$   with  time-step $\Delta t=t_{n+1}-t_n$,  we consider  first  the approximation $(\widehat{X}_{t_{n}},n\geq1)$ given by
 \begin{equation}\label{eq:Explicit exp-Euler}
\widehat{X}_{t_{n+1}}=\widehat{X}_{t_n}\exp\Big\{\Big(\tfrac{b(\widehat{X}_{t_n})}{\widehat{X}_{t_n}}-\frac{\sigma^2}2 \widehat{X}_{t_n}^{2(\alpha-1)}\Big)\Dt + \sigma\overline{X}_{t_n}^{\alpha-1}\;(W_{t_{n+1}}-W_{t_n}) \Big\}, ~~\widehat{X}_0=x,
\end{equation}
and its continuous version given by the interpolation in time:
\[d\widehat{X}_t = \widehat{X}_t\Big(\tfrac{b(\widehat{X}_{\eta(t)})}{\widehat{X}_{\eta(t)}}dt+\sigma \widehat{X}^{\alpha-1}_{\eta(t)}dW_t\Big), ~~X_0=x>0,\]
where $\eta(t):=\sup\{t_i:t_i <  t\}$. Ensuring the strict positivity of the approximation at all times, the scheme \eqref{eq:Explicit exp-Euler}  enables also  to counterbalance the rapid growth of the diffusion $\widehat{X}^{\alpha-1}_{\eta(t)}$ by the drift contribution $\tfrac{b(\widehat{X}_{\eta(t)})}{\widehat{X}_{\eta(t)}}$ subject to  \ref{C2:H_poly} and \ref{C2:H_loclip}. The scheme \eqref{eq:Explicit exp-Euler}  is also sensitive to the value of $b$ near zero: when $b(0)=0$,  \ref{C2:H_poly} yields to
\[ \frac{b(x)}{x}\leq B_1-B_2 x^{2(\alpha-1)}, \forall x\geq 0,\]
and, combined with \ref{C2:H_loclip},  enables to prove the existence of some positive moments for $(\widehat{X}_t;0\leq t\leq T)$  (replicating for instance the last proof steps of Proposition \ref{prop:XMoments}). But when $b(0)> 0$, numerical instabilities can be observed when $\widehat{X}$ comes close to zero. More specifically, we  haven't been able to find  a threshold $\xi$ such that $\PP(\widehat{X}_t\leq \xi)$ decays in $\Delta t$, nor to control some positive moments in that case. 

To overcome such instabilities, the continuous version of the scheme \eqref{eq:Explicit exp-Euler} can be modified by adding and subtracting $b(0)$ as follows:
 \[d\widehat{X}_t = b(0)dt+\widehat{X}_t\Big(\tfrac{b(\widehat{X}_{\eta(t)})-b(0)}{\widehat{X}_{\eta(t)}}dt+\sigma \widehat{X}^{\alpha-1}_{\eta(t)}dW_t\Big),\]
or equivalently, defining ${\dt}:= t-{\eta(t)}$,
\begin{align*}
\widehat{X}_t=\widehat{X}_{\eta(t)}\exp\Big\{\sigma\widehat{X}_{\eta(t)}^{\alpha-1}\;(W_t-W_{\eta(t)})+\Big(\tfrac{b(\widehat{X}_{\eta(t)})-b(0)}{\overline{X}_{\eta(t)}}-\tfrac{\sigma^2}2 \widehat{X}_{\eta(t)}^{2(\alpha-1)}\Big){\dt}+\int_{\eta(t)}^t\tfrac{b(0)}{\widehat{X}_s}ds\Big\},
\end{align*}
for which we need to discretize the integral appearing in the right hand-side to turn it in  a numerical algorithm. The approximation
$$
\int_{\eta(t)}^t\tfrac{b(0)}{\widehat{X}_s}ds \approx \tfrac{b(0)}{\widehat{X}_{\eta(t)}}{\dt},$$
 makes the corresponding scheme comes back to  \eqref{eq:Explicit exp-Euler} for which we do not control --a priori-- positive moments. In contrast,  the approximation
$$\int_{\eta(t)}^t\tfrac{b(0)}{\widehat{X}_s}ds \approx \tfrac{b(0)}{\widehat{X}_{t}}{\dt},$$
produces the following implicit numerical scheme:
\begin{equation}\label{eq:Explicit exp-Euler III}
f\left(t,\check{X}_t\right)=\check{X}_{\eta(t)}\exp\big\{\sigma\check{X}_{\eta(t)}^{\alpha-1}\;(W_t-W_{\eta(t)})+\big(\tfrac{b\left(\check{X}_{\eta(t)}\right)-b(0)}{\check{X}_{\eta(t)}}-\tfrac{\sigma^2}2 \check{X}_{\eta(t)}^{2(\alpha-1)}\big){\dt}\big\},
\end{equation}
where $f(t,x)= x\exp\{-\frac{b(0){\dt}}{x}\}$, for which control of positive moments for $(\check{X}_t;0\leq t\leq T)$ are obtained under~\ref{C2:H_poly} (see \cite[Chap. 2]{kerlyns}).

To turn \eqref{eq:Explicit exp-Euler III} in a numerical scheme, we combine it with an approximation  method for $x\mapsto f^{-1}(x)$,  by considering a Taylor expansion of first order:
\[f(t,x)\approx x-b(0){\dt}.\]
With this last approximation, we define the scheme  $(\overline{X}_{t_{n}},n\geq1)$, that we now refer to as exp-ES, for \emph{ Exponential-Euler Scheme}, by $\overline{X}_0=x$, and
\begin{equation}\label{eq:DecomposednumericScheme2}
\overline{X}_{t_{n+1}}=b(0)\Delta t+\overline{X}_{t_n}\exp\Big\{\sigma\overline{X}_{t_n}^{\alpha-1}(W_{t_{n+1}}-W_{t_n})+\big(\tfrac{b\left(\overline{X}_{t_n}\right)-b(0)}{\overline{X}_{t_n}}-\tfrac{\sigma^2}2 \overline{X}_{t_n}^{2(\alpha-1)}\big)\Dt\Big\},
\end{equation}
admitting the continuous version
\begin{equation}\label{eq:DecomposednumericScheme}
\overline{X}_t=b(0){\dt}+\overline{X}_{\eta(t)}\exp\Big\{\sigma\overline{X}_{\eta(t)}^{\alpha-1}(W_t-W_{\eta(t)})+\big(\tfrac{b\left(\overline{X}_{\eta(t)}\right)-b(0)}{\overline{X}_{\eta(t)}}-\tfrac{\sigma^2}2 \overline{X}_{\eta(t)}^{2(\alpha-1)}\big){\dt}\Big\},
\end{equation}
driven by the SDE:
\begin{equation}\label{eq:ContExpscheme}
d\overline{X}_t=\left(\overline{X}_t-b(0){\dt}\right)\Big(\tfrac{b\left(\overline{X}_{\eta(t)}\right)-b(0)}{\overline{X}_{\eta(t)}}dt+\sigma\overline{X}_{\eta(t)}^{\alpha-1}dW_t\Big)+b(0) dt.
\end{equation}

\begin{rem}\label{rem:SchemePositive}
By construction,  due to the exponential form in \eqref{eq:DecomposednumericScheme}, $\overline{X}_t-b(0){\dt}>0$, provided that $x>0$. In particular, for all $0\leq t\leq T$, $\overline{X}_t>0$ .
\end{rem}

For the exp-ES  $\overline{X}$, we bound the same order of  $2p$\,th-moments than for $X$ in Proposition \ref{prop:XMoments}.
\begin{lem}\label{leq:Schememoments}
Assume \ref{C2:H1}, \ref{C2:H_loclip} and \ref{C2:H_poly}. For all exponent $0 < 2p \leq 1+\frac{2B_2}{\sigma^2}$, there exists a non-negative constant $C_p$, depending on $p$ but not on $x$, such that
\[
\sup_{t\in[0,T]}\EE\big[{\overline{X}_t^{2p}}\big]\leq C_p(1+x^{2p}),\,x>0.
\]
\end{lem}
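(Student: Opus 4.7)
The plan is to mimic the argument sketched for Proposition~\ref{prop:XMoments}, but at the discrete level, exploiting the fact that conditional on $\mathcal{F}_{\eta(t)}$ the scheme is an explicit shifted log-normal. By Remark~\ref{rem:SchemePositive} the quantity $U_t := \overline{X}_t - b(0)\dt$ is strictly positive, and \eqref{eq:ContExpscheme} (or directly \eqref{eq:DecomposednumericScheme}) gives the closed form
\[
U_t = \overline{X}_{\eta(t)}\exp\Big\{\sigma\overline{X}_{\eta(t)}^{\alpha-1}(W_t-W_{\eta(t)}) + \Big(\tfrac{b(\overline{X}_{\eta(t)})-b(0)}{\overline{X}_{\eta(t)}}-\tfrac{\sigma^2}{2}\overline{X}_{\eta(t)}^{2(\alpha-1)}\Big)\dt\Big\},
\]
so $\EE[U_t^{2p}\mid\mathcal{F}_{\eta(t)}]$ is computable via the Gaussian MGF.

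I would first treat the range $2p\in[1,1+2B_2/\sigma^{2}]$. Apply the weighted convexity estimate $(a+b)^{2p}\leq(1-\Dt)^{1-2p}b^{2p}+\Dt^{1-2p}a^{2p}$ to $\overline{X}_t=b(0)\dt+U_t$; the choice of weights is made so that $\Dt^{1-2p}(b(0)\dt)^{2p}\leq(b(0))^{2p}\Dt$ and $(1-\Dt)^{1-2p}\leq e^{C\Dt}$ for $\Dt$ small. Taking conditional expectation and using the explicit Gaussian computation,
\[
\EE[U_t^{2p}\mid\mathcal{F}_{\eta(t)}] = \overline{X}_{\eta(t)}^{2p}\exp\Big\{\Big[p(2p-1)\sigma^{2}\overline{X}_{\eta(t)}^{2(\alpha-1)}+2p\tfrac{b(\overline{X}_{\eta(t)})-b(0)}{\overline{X}_{\eta(t)}}\Big]\dt\Big\},
\]
and invoking \ref{C2:H_poly} via $\tfrac{b(y)-b(0)}{y}\leq B_1-B_2y^{2(\alpha-1)}$, the exponent is bounded by $\bigl[p(2p-1)\sigma^2-2pB_2\bigr]\overline{X}_{\eta(t)}^{2(\alpha-1)}\dt+2pB_1\dt$. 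The standing hypothesis $2p\leq 1+2B_2/\sigma^2$ makes the bracketed coefficient $\leq 0$, so the conditional expectation is bounded by $e^{2pB_1\Dt}\overline{X}_{\eta(t)}^{2p}$. Combining these two ingredients yields the recursion $\EE[\overline{X}_{t_{n+1}}^{2p}\mid\mathcal{F}_{t_n}]\leq(1+C\Dt)\overline{X}_{t_n}^{2p}+C\Dt$; discrete Gronwall then gives $\sup_n\EE[\overline{X}_{t_n}^{2p}]\leq C_p(1+x^{2p})$, and the same estimate holds for arbitrary $t\in[t_n,t_{n+1}]$ because $\dt\leq\Dt$.

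For $2p\in(0,1)$ the range $2p=1$ is admissible (since $B_2\geq 0$), so I would conclude by Jensen applied to the concave map $y\mapsto y^{2p}$: $\EE[\overline{X}_t^{2p}]\leq(\EE[\overline{X}_t])^{2p}\leq(C(1+x))^{2p}\leq C_p(1+x^{2p})$, using finally the subadditivity $(1+x)^{2p}\leq 1+x^{2p}$ valid in this regime. The main obstacle throughout is handling the additive term $b(0)\Dt$ in the scheme: the naive subadditivity $(a+b)^{2p}\leq a^{2p}+b^{2p}$, applied at each step for $2p<1$, accumulates a contribution $n(b(0)\Dt)^{2p}\sim T\Dt^{2p-1}$ that diverges as $\Dt\to 0$. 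This is precisely why the two-step strategy above is required, closing the recursion at $2p\geq 1$ with a $\Dt$-weighted Young inequality and only then descending to lower exponents through Jensen.
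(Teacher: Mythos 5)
Your proof is correct, but it follows a genuinely different route from the paper's. The paper applies It\^o's formula to $\overline{X}_t^{2p}$ using the continuous dynamics \eqref{eq:ContExpscheme}: the superlinear drift contribution $-2pB_2\,\overline{X}_{\eta(s)}^{2(\alpha-1)}$ coming from \ref{C2:H_poly} absorbs the It\^o correction $p(2p-1)\sigma^2\overline{X}_{\eta(s)}^{2(\alpha-1)}$ precisely when $2p\leq 1+2B_2/\sigma^2$, after which a single continuous-time Gronwall argument concludes (with a localization step that the paper omits). You instead exploit the fact that, conditionally on $\mathcal{F}_{\eta(t)}$, the scheme is an explicit shifted log-normal, compute the conditional $2p$-th moment exactly via the Gaussian moment generating function, and observe that the same algebraic condition $p(2p-1)\sigma^2\leq 2pB_2$ makes the exponent nonpositive. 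Your discrete recursion plus discrete Gronwall then replaces It\^o and avoids any localization or stochastic-calculus argument, since integrability propagates step by step through the tower property. Your treatment of the additive term $b(0)\Dt$ via the $\Dt$-weighted convexity inequality, and your observation that naive subadditivity for $2p<1$ would accumulate a divergent $T\Dt^{2p-1}$ contribution (forcing the descent to $2p<1$ through Jensen from the admissible case $2p=1$), are both correct and address a point that the paper's continuous-time argument sidesteps automatically because $b(0)\,dt$ enters the drift rather than as a per-step additive constant. The two proofs identify the same moment threshold; yours is more elementary and makes the origin of that threshold (cancellation in the log-normal exponent) completely explicit, while the paper's is shorter and reuses the machinery of Proposition \ref{prop:XMoments}.
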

\begin{proof}
Applying It\^{o}'s formula to the  process $\overline{X}$ (for simplification, we omit the localization argument previously used in the proof of Proposition \ref{prop:XMoments}), we get
\begin{align*}
& \EE[\overline{X}_{t}^{2p}]
= x^{2p}+2pb(0)\EE\big[\int_0^{t\wedge\tau_M}\overline{X}_s^{2p-1}ds\big]\\
& +p\EE\Big[\int_0^{t}\overline{X}_s^{2p-2}(\overline{X}_s-b(0){\ds})\Big\{2\overline{X}_s\tfrac{b(\overline{X}_{\eta(s)})-b(0)}{\overline{X}_{\eta(s)}}+(2p-1)\sigma^2\left(\overline{X}_s-b(0){\ds}\right)
\overline{X}_{\eta(s)}^{2(\alpha-1)}\Big\}ds
\Big]\\
&\leq x^{2p}+2pb(0)\int_0^{t}\EE[\overline{X}_s^{2p-1}] ds
+2pB_1
\EE\big[\int_0^{t}\overline{X}_s^{2p-1}(\overline{X}_s-b(0){\ds})ds\big]\\
&\leq C(1+x^{2p})+C\int_0^{t}\EE[\overline{X}_{s}^{2p}] ds .
\end{align*}
The proof ends by applying Gronwall's inequality.
\end{proof}

\subsection{Main results}
Under the following hypotheses we state below the weak rate of convergence of order one for \eqref{eq:IntroSDE} associated with the exponential-Euler scheme \eqref{eq:DecomposednumericScheme2}:

\setcounter{primehyp}{1}
\begin{subprimehyp}
\begin{primehyp}\label{C2:H_loclip'}
{\bf (For the regularity of the  Kolmogorov PDE \eqref{eq:KolPDE}.)} The function $b$ is $2(\alpha-1)$-locally Lipschitz, and $b(0)\geq0$. In addition, $b$ is of class $\mathcal{C}^4(\mathbb{R}^+)$, with derivatives $b^{(i)}$ being  $({\overline{\gamma}}_{(i)},{\underline{\gamma}}_{(i)})$-locally Lipschitz continuous, for $i=1,\ldots,4$.
\end{primehyp}
\end{subprimehyp}
\ref{C2:H_loclip'} implies in particular that 
\begin{align}\label{eq:poly_bi}
|b^{(i)}(x)| \leq C \big(1 +x^{\overline{\gamma}_{(i)}+1} + x^{-\underline{\gamma}_{(i)}}\big), ~ \mbox{ for }i=1,2,3,4,
\end{align}
where, according to Lemma \ref{lem:LocallyLipschitz}, $2\alpha-3 \leq \overline{\gamma}_{(1)}$, $\overline{\gamma}_{(i)}\leq \overline{\gamma}_{(i+1)} + 1$, and $\underline{\gamma}_{(i)}\leq \underline{\gamma}_{(i+1)}$ for $i=1,2,3$.

\begin{subprimehyp}
\begin{primehyp}\label{C2:H_poly'}
{\bf (For the exponential moments of $X$.)} There exist a set of constants $B_i, B_i' \geq0$, with $i=1,2$, such that
\begin{equation*}
b(x)\leq B_1 x - B_2 \,x^{2\alpha-1}+b(0),\;\mbox{ and }~ b'(x)\leq B_1' - B_2^\prime\,x^{2(\alpha-1)}.
\end{equation*}
\end{primehyp}
\end{subprimehyp}

\setcounter{hypbis}{3}
\begin{subhypbis}
\begin{hypbis}\label{C2:H4_poly}%{C2:H_polyiv}
{\bf (For the weak convergence rate derivation.)} The powers $\underline{\gamma}_{(i)}$ in \ref{C2:H_loclip'} satisfy:
\[\underline{\gamma}_{(i)}\leq i-1, ~ \mbox{ for }i=1,2,3 ~~\mbox{and }~\underline{\gamma}_{(4)}\leq 4.\]
\end{hypbis}
\end{subhypbis}

\begin{subhypbis}
\begin{hypbis}\label{C2:H5}
{\bf (For the regularity of the Kolmogorov PDE in Proposition  \ref{prop:KolPDE}).} The constants $B_2$, $B_2'$, $\alpha$ and the $(\overline{\gamma}_{(i)}, i=1,\ldots,4)$ in \ref{C2:H_loclip'}  and  \ref{C2:H_poly'} satisfy
\begin{align*}
B_2 \geq &  3 \sigma^2 \alpha  +  \tfrac{\sigma^2}{2}
\big[
\big\{
(2\overline{\overline{\beta}})\vee(\overline{\overline{\beta}} + 2\alpha)
\big\} -1
\big],\\
B_2^\prime \geq & \sigma^2\alpha\left(\tfrac{17}{2}\alpha -3\right)
\end{align*}
where $\overline{\overline{\beta}}=3(\overline{\gamma}_{(2)}+1)\vee ( \overline{\gamma}_{(2)} + \overline{\gamma}_{(3)}+2) \vee (\overline{\gamma}_{(4)}+1)$.

In addition whenever $b(0)>0$, we assume that $\alpha >\tfrac32$ and  we modify the constraint on $B_2$ as
$$B_2 \geq 3  \sigma^2 \alpha  + \frac{\alpha^2}{2} \vee \frac{\sigma^2}{2}
\Big[
\big\{
(2\overline{\overline{\beta}})\vee(\overline{\overline{\beta}} + 2\alpha)
\big\} -1
\Big].$$
\end{hypbis}
\end{subhypbis}

\begin{prop}\label{prop:Weak rate}
Let $f$ be a bounded $\mathcal{C}^4(\RR^+)$ function with bounded derivatives up to order 4. Consider the process $X$ solution to the SDE \eqref{eq:IntroSDE} with deterministic initial condition $x>0$, together with its approximation $\overline{X}$ in \eqref{eq:DecomposednumericScheme}. Assume \ref{C2:H1}, \ref{C2:H_loclip'},\ref{C2:H_poly'}, \ref{C2:H4_poly},  and \ref{C2:H5}. Then, there exists a constant $C>0$ depending on the parameters $B_i,B_i',\alpha,\sigma$ and possibly on $T$ and $x$, but independent on $\Delta t$, such that
\begin{equation}\label{eq:RateWeakConv1}
\left|\EE\big[ f(X_T)\big]- \EE\big[ f(\overline{X}_T)\big] \right|\leq C\;\Delta t.
\end{equation}
\end{prop}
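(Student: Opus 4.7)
The plan follows the classical Talay--Tubaro framework for weak error analysis, adapted to the superlinear positive-process setting and to the specific structure of the exp-ES dynamics. The two main inputs are the regularity of the backward Kolmogorov PDE announced as Proposition~\ref{prop:KolPDE} (to be established in Section~\ref{sec:CauchyP}) and the moment estimate of Lemma~\ref{leq:Schememoments}, together with the strict positivity of Remark~\ref{rem:SchemePositive}.

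Set $u(t,x):=\EE[f(X_T^{t,x})]$, with $X^{t,x}$ solving \eqref{eq:IntroSDE} from $x>0$ at time $t$. By Proposition~\ref{prop:KolPDE}, $u\in C^{1,4}([0,T)\times(0,\infty))$ solves
\begin{equation*}
\partial_t u+\mathcal{L}u=0,\quad u(T,\cdot)=f,\qquad \mathcal{L}\varphi(x):=b(x)\varphi'(x)+\tfrac{\sigma^2}{2}x^{2\alpha}\varphi''(x),
\end{equation*}
with each $\partial_x^k u$ ($k\le 4$) controlled by $C(1+x^{\overline{p}_k}+x^{-\underline{p}_k})$ for explicit exponents. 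Since $u(0,x)=\EE[f(X_T)]$ and $u(T,\overline{X}_T)=f(\overline{X}_T)$, applying It\^o's formula on $[0,T]$ to $u(t,\overline{X}_t)$ with the dynamics \eqref{eq:ContExpscheme} and substituting $\partial_t u=-\mathcal{L}u$ yields
\begin{equation*}
\EE[f(\overline{X}_T)]-\EE[f(X_T)]=\EE\Big[\int_0^T(\tilde{\mathcal{L}}_t-\mathcal{L})u(t,\overline{X}_t)\,dt\Big],
\end{equation*}
where, on $[t_n,t_{n+1}]$,
\begin{equation*}
\tilde{\mathcal{L}}_t\varphi(y)=\Big[(y-b(0)\dt)\tfrac{b(\overline{X}_{\eta(t)})-b(0)}{\overline{X}_{\eta(t)}}+b(0)\Big]\varphi'(y)+\tfrac{\sigma^2}{2}(y-b(0)\dt)^2\overline{X}_{\eta(t)}^{2(\alpha-1)}\varphi''(y).
\end{equation*}

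The cornerstone is the cancellation $(\tilde{\mathcal{L}}_t-\mathcal{L})\varphi(\overline{X}_{\eta(t)})=0$ at every grid point $t=t_n$: there $\dt=0$ and the coefficients of $\tilde{\mathcal{L}}_t$ reduce to those of $\mathcal{L}$. Viewing $g(t,y):=(\tilde{\mathcal{L}}_t-\mathcal{L})u(t,y)$ as a smooth function on each $[t_n,t_{n+1}]\times(0,\infty)$ (with $\overline{X}_{t_n}$ frozen), one has $g(t_n,\overline{X}_{t_n})=0$, so a second application of It\^o's formula between $t_n$ and $t$ followed by $\Ff_{t_n}$-conditional expectation gives
\begin{equation*}
\EE\big[g(t,\overline{X}_t)\big]=\EE\Big[\int_{t_n}^t\big(\partial_s g+\tilde{\mathcal{L}}_s g\big)(s,\overline{X}_s)\,ds\Big].
\end{equation*}
Integrating in $t$ over $[t_n,t_{n+1}]$ and summing $n=0,\ldots,N-1$ then delivers $N$ terms, each of order $\Delta t^2$, hence the sought $O(\Delta t)$ bound.

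The main obstacle is showing that $\sup_{s\in[0,T]}\EE\big[|(\partial_s+\tilde{\mathcal{L}}_s)g|(s,\overline{X}_s)\big]$ is finite uniformly in $\Delta t$. Because $g$ already involves $\partial_{xx}u$, the operator $\tilde{\mathcal{L}}_s g$ brings in $\partial_x^3 u$ and $\partial_x^4 u$, multiplied by coefficients built from $b(y)$, $b'(y)$, $b(0)$ and powers $\overline{X}_{\eta(s)}^{\alpha-1}$, $\overline{X}_{\eta(s)}^{2(\alpha-1)}$, $y^{2\alpha}$; analogous remarks apply to $\partial_s g$ after using the PDE. By Proposition~\ref{prop:KolPDE}, each $\partial_x^k u$ carries both positive- and negative-power growth in $y$, so one must pair these with the positive moment bounds of Lemma~\ref{leq:Schememoments} and with negative moment bounds on $\overline{X}_s$ exploiting the strict positivity of Remark~\ref{rem:SchemePositive} and the explicit exponential form \eqref{eq:DecomposednumericScheme}. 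Wherever useful, Taylor-expanding $b$ and $x\mapsto x^{\alpha}$ around $\overline{X}_{\eta(s)}$ trades cancellations for extra factors $\overline{X}_s-\overline{X}_{\eta(s)}$ (of order $\sqrt{\Delta t}$ in $L^p$). Hypotheses \ref{C2:H4_poly} and \ref{C2:H5} are calibrated precisely so that all resulting exponents fit within the positive/negative and exponential moment budget provided by Proposition~\ref{prop:XMoments} and Lemmas~\ref{lem:Negative}--\ref{lem:Exponentials}, finally yielding \eqref{eq:RateWeakConv1}.
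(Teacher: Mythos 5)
Your overall strategy coincides with the paper's: write the error via the $C^{1,4}$ Feynman--Kac solution $u$ of \eqref{eq:KolPDE}, apply It\^o along $\overline{X}$ using \eqref{eq:ContExpscheme}, use $\partial_t u=-\mathcal{L}u$ so that only the generator mismatch survives, observe that this mismatch vanishes at the grid points, apply It\^o a second time on each $[\eta(s),s]$, and bound the resulting integrands with Proposition \ref{prop:KolPDE} and Lemma \ref{leq:Schememoments} to get $O(\Delta t^2)$ per interval. That is exactly the paper's decomposition into the terms $I_1,I_2$ and then $I_j^k$.

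There is, however, a genuine gap in how you propose to close the moment estimates. You state that the negative-power growth of $\partial_x^k u$ is to be paired with ``negative moment bounds on $\overline{X}_s$'' obtained from the strict positivity of Remark \ref{rem:SchemePositive} and the exponential form \eqref{eq:DecomposednumericScheme}, and that the exponents fit into the budget of Proposition \ref{prop:XMoments} and Lemmas \ref{lem:Negative}--\ref{lem:Exponentials}. Neither claim holds: those results concern the exact process $X$, not the scheme, and no negative (nor exponential) moment bound for $\overline{X}$ is established anywhere --- Remark \ref{rem:SchemePositive} is purely qualitative and gives no quantitative control of $\EE[\overline{X}_t^{-q}]$; indeed the authors state explicitly that they could not control the behaviour of the scheme near zero when $b(0)>0$. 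The paper's resolution is different and is precisely the point of \ref{C2:H4_poly}: the constraints $\underline{\gamma}_{(i)}\leq i-1$ ($i=1,2,3$), $\underline{\gamma}_{(4)}\leq 4$ are calibrated so that, after multiplying the negative powers coming from $\partial_x^k u$ and $b^{(i)}$ by the positive powers carried by the coefficients $\overline{b},\overline{\sigma}$ and $x^{2\alpha}$, every surviving exponent of $\overline{X}$ in each $I_j^k$ is non-negative, so that only the positive-moment bound of Lemma \ref{leq:Schememoments} (with order controlled through \ref{C2:H5}) is ever invoked. Your sketch acknowledges \ref{C2:H4_poly} but misattributes its role, and as written the bounding step would fail for lack of the negative-moment input. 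A secondary omission: the expectations of the stochastic-integral terms produced by the second It\^o expansion (the analogues of $I_1^3,I_2^3$) must be shown to vanish, which requires checking square-integrability of their integrands via the same scheme-moment bounds; this should be stated rather than assumed.
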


The hypotheses in  Proposition \ref{prop:Weak rate}  are all sufficient conditions, considered in order to simplify the analysis of the regularity associated with the solution of the backward Kolmogorov PDE.

Precisely, Hypotheses \ref{C2:H_poly'} and \ref{C2:H_loclip'} are considered in order to obtain polynomial bounds  for the derivatives of the solution to the backward Kolmogorov PDE (Proposition \ref{prop:KolPDE}).
Later, in the computation of the weak error,  we use \ref{C2:H5} specifically to control the resulting positive moments of the exp-ES process (see the proof of  Proposition \ref{prop:Weak rate} in Section 6), and  by considering \ref{C2:H4_poly} we seek to avoid the need to control the negative moments of the  approximation scheme  arising also from the estimation of these derivatives  (see for instance the inequality \eqref{eq:exemple_control} below).

We also emphasis that the analysis exposed in this paper can be easily  adapted to the case of locally bounded $\mathcal{C}^4(\RR^+)$-function $f$  with locally bounded derivatives.

\section{Numerical Experiments}\label{sec:Numeric}

This section illustrates with some experiments the theoretical rate of convergence in  Proposition \ref{prop:Weak rate}. More experiments are shown in Appendix \ref{appendix:numerical} and  explore the hypotheses on the coefficient parameters by testing the convergence  through different cases,  fulfilling or not some of the hypotheses.

In particular we illustrate the fact that hypothesis  \ref{C2:H5} do not correspond to a necessary condition on the parameters $B_2,B_2',\alpha,\sigma$ involved in the model. First we restrict the set of parameters by  considering   the explicit model 
\begin{equation}\label{eq:proto_CEV_with_const}
dX_t = (B_0 + B_1 X_t -B_2\;X_t^{2\alpha-1}) dt+\sigma X_t^\alpha dW_t, \;X_0=x>0,
\end{equation}
for which $B'_1=1$ and $B'_2= (2\alpha-1) B_2$.  Proposition \ref{prop:Weak rate} can be shapely adapted in this particular situation as follows:
\begin{cor}\label{prop:Weak rateCor}
We consider the solution $X$  to \eqref{eq:proto_CEV_with_const}. When $B_0 =0$, assume $\alpha >1$ and
\begin{align}\label{eq:constraint_a1}
B_2 - 3 \sigma^2 \alpha  -   \tfrac{\sigma^2}{2}
\big[
(12\alpha-19) \vee (8\alpha-10) \vee \tfrac{5\alpha^2}{(2\alpha -1)}
\big]\geq 0,
\end{align}
When $B_0 >0$, assume  in addition that $\alpha >\tfrac32$ and
\begin{align}\label{eq:constraint_a32}
B_2 - 3  \sigma^2 \alpha  -  \tfrac{\alpha^2}{2} \vee \tfrac{\sigma^2}{2}
\big[
(12\alpha-19) \vee (8\alpha-10) \vee \tfrac{5\alpha^2}{(2\alpha -1)}
\big]\geq 0.
\end{align}
Then, for $f \in \mathcal{C}_b^4(\RR^+)$, there exist a constant $C>0$ depending on the parameters $B_i,\alpha,\sigma$ and possibly on $T$ and $x$, but independent on $\Delta t$, such that
\begin{equation}\label{eq:RateWeakConv}
\left|\EE[f(X_T)]-\EE[f(\overline{X}_T)] \right|\leq C\;\Delta t.
\end{equation}
\end{cor}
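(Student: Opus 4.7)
The plan is to verify that the polynomial drift $b(x)=B_0+B_1x-B_2x^{2\alpha-1}$ of the explicit model \eqref{eq:proto_CEV_with_const} fulfils every hypothesis of Proposition \ref{prop:Weak rate}, and then to translate the abstract constraint \ref{C2:H5} into the explicit inequalities \eqref{eq:constraint_a1} or \eqref{eq:constraint_a32}. Since $b$ is a polynomial, the verification reduces to elementary differentiation.

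First I would identify the local-Lipschitz exponents. Direct differentiation gives
$$b'(x)=B_1-B_2(2\alpha-1)x^{2\alpha-2},\qquad b^{(i)}(x)=-B_2\prod_{j=1}^{i}(2\alpha-j)\,x^{2\alpha-1-i}\quad\text{for }2\leq i\leq 4.$$
These are pure polynomials with no inverse power of $x$, so the growth bound \eqref{eq:poly_bi} holds with $\underline{\gamma}_{(i)}=0$ and $\overline{\gamma}_{(i)}=2\alpha-2-i$ for $i=1,2,3,4$. Hence \ref{C2:H4_poly} is automatically satisfied (all $\underline{\gamma}_{(i)}$ vanish), and the $\mathcal{C}^4$-smoothness together with the local-Lipschitz regularity of $b$ and its derivatives required in \ref{C2:H_loclip'} follow at once. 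From the expression for $b'(x)$ one reads that \ref{C2:H_poly'} holds with $B_1'=B_1$ and $B_2'=(2\alpha-1)B_2$.

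Second I would compute the quantity $\overline{\overline{\beta}}$ appearing in \ref{C2:H5}. With the exponents above,
$$\overline{\overline{\beta}}=3(\overline{\gamma}_{(2)}+1)\vee(\overline{\gamma}_{(2)}+\overline{\gamma}_{(3)}+2)\vee(\overline{\gamma}_{(4)}+1)=\max(6\alpha-9,\,4\alpha-7,\,2\alpha-5)=6\alpha-9$$
for every $\alpha>1$, so $2\overline{\overline{\beta}}\vee(\overline{\overline{\beta}}+2\alpha)-1=(12\alpha-19)\vee(8\alpha-10)$, which accounts for the first two arguments of the max in \eqref{eq:constraint_a1}. The third argument $\tfrac{5\alpha^2}{2\alpha-1}$ comes from the second inequality of \ref{C2:H5}: substituting $B_2'=(2\alpha-1)B_2$ into $B_2'\geq\sigma^2\alpha(\tfrac{17}{2}\alpha-3)$ and rearranging yields
$$B_2\geq\frac{\sigma^2\alpha(\tfrac{17}{2}\alpha-3)}{2\alpha-1}=3\sigma^2\alpha+\frac{\sigma^2}{2}\cdot\frac{5\alpha^2}{2\alpha-1}.$$
Combining the two resulting lower bounds on $B_2$ gives exactly \eqref{eq:constraint_a1}.

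Third, in the case $B_0>0$ the additional requirement $\alpha>\tfrac{3}{2}$ is inherited directly from \ref{C2:H5}, and the modified max gains the extra argument $\tfrac{\alpha^2}{2}$, producing \eqref{eq:constraint_a32}. The conclusion \eqref{eq:RateWeakConv} is then an immediate consequence of \eqref{eq:RateWeakConv1} in Proposition \ref{prop:Weak rate}. I do not foresee any genuine obstacle: the proof is entirely a bookkeeping exercise, since the explicit polynomial form of $b$ makes the local-Lipschitz exponents immediately computable and collapses the two constraints of \ref{C2:H5} on $B_2$ and $B_2'$ into a single constraint on $B_2$ through the relation $B_2'=(2\alpha-1)B_2$.
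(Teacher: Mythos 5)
Your overall route is exactly the intended one (and the only one available: the paper gives no separate proof of the corollary, it simply specializes Proposition \ref{prop:Weak rate}), and your arithmetic is the paper's: with $B_2'=(2\alpha-1)B_2$ and $\overline{\overline{\beta}}=6\alpha-9$ one gets $\{(2\overline{\overline{\beta}})\vee(\overline{\overline{\beta}}+2\alpha)\}-1=(12\alpha-19)\vee(8\alpha-10)$, and $\frac{\sigma^2\alpha(\frac{17}{2}\alpha-3)}{2\alpha-1}=3\sigma^2\alpha+\frac{\sigma^2}{2}\cdot\frac{5\alpha^2}{2\alpha-1}$, which reproduces \eqref{eq:constraint_a1} and \eqref{eq:constraint_a32}.

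There is, however, a concrete flaw in your verification of the hypotheses. The claim that $b^{(i)}$, $i\ge 2$, ``are pure polynomials with no inverse power of $x$'', so that $\underline{\gamma}_{(i)}=0$ and \ref{C2:H4_poly} is automatic, is false on a large part of the parameter range the corollary covers: $b^{(i)}(x)=-B_2\prod_{j=1}^{i}(2\alpha-j)\,x^{2\alpha-1-i}$ has a \emph{negative} exponent whenever $i>2\alpha-1$, e.g.\ $b''(x)\sim x^{2\alpha-3}$ blows up at $0$ for $1<\alpha<\tfrac32$ (the case $B_0=0$, $\alpha>1$ explicitly allowed), and $b^{(3)},b^{(4)}$ are singular for all non-integer $2\alpha$ with $\alpha<\tfrac52$. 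The honest Lipschitz exponents are $\underline{\gamma}_{(i)}=(i+2-2\alpha)^+$ (and $\overline{\gamma}_{(i)}=(2\alpha-2-i)^+$, since Definition \ref{def:Lipschitz} requires non-negative exponents). For $\alpha\ge\tfrac32$ these still satisfy \ref{C2:H4_poly}, so your conclusion stands after correcting the exponents; for $1<\alpha<\tfrac32$ the literal \ref{C2:H4_poly} fails ($\underline{\gamma}_{(1)}=3-2\alpha>0$), and one must instead observe that in the weak-error estimates the $\underline{\gamma}_{(i)}$ only enter through the growth bounds \eqref{eq:poly_bi} on $|b^{(i)}|$ itself, whose negative parts have exponents $(i+1-2\alpha)^+\le i-1$ (and $\le 4$ for $i=4$), so no uncontrolled negative moments of $\overline{X}$ arise — this is precisely the adaptation the corollary's ``shapely adapted'' wording hides, and it needs to be said rather than asserted away. (The replacement of your possibly negative $\overline{\gamma}_{(i)}=2\alpha-2-i$ by their positive parts is harmless for the final constraint, because when the truncation bites one has $\frac{5\alpha^2}{2\alpha-1}\ge 2\overline{\overline{\beta}}-1$ and $\ge\overline{\overline{\beta}}+2\alpha-1$, so the bracket in \eqref{eq:constraint_a1}--\eqref{eq:constraint_a32} still dominates the requirement of \ref{C2:H5}.)
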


\paragraph{Numerical parameters.  }For all the presented numerical experiments,  we consider a unit terminal time $T=1$, the initial condition $x=1$ and the time step $\Delta t=1/2^p$, for $p=1,\ldots, 9$. In addition, the empirical mean of the scheme approximating $\EE f(\overline{X}_T)$ is estimated by a Monte Carlo approximation, involving $n=10^5$ independent trajectories.

\paragraph{Test functions. }Along this section, we consider four different test functions, not all bounded,
\[f(x)=x,\;x^2,\;\exp(-x^2).\]

\paragraph{Model cases. }Denoting $\kappa$  as the left-handside of \eqref{eq:constraint_a1}  or \eqref{eq:constraint_a32}, we consider the following cases, determined by the data $(B_0, B_1,B_2,\sigma,\alpha)$
\begin{center}
\begin{tabular}{lll}
{\bf Case 1}\qquad
$(0,0,2,\tfrac{1}{10},\tfrac32)$
&  $dX_t = -2X_t^{2} dt+ \frac{X_t^{3/2}}{10} dW_t$
&\qquad $\kappa > 1.95$ \\
{\bf Case 2}\qquad %
$(0,0,3,1,\tfrac54)$
&   $dX_t = -3X_t^{{3}/{2}} dt+ X_t^{{5}/{4}} dW_t $
&\qquad $\kappa < -3$ \\
{\bf Case 3}\qquad %
$(0,0,1,1,\tfrac32)$
&   $dX_t = -X_t^{2} dt+\sigma X_t^{3/2} dW_t$
&\qquad $\kappa < -3$ \\
{\bf Case 4}\qquad %
$(1,1,\tfrac{2}{5},\tfrac{1}{10},3)$
&  $dX_t =(1+X_t -\tfrac{2}{5} X_t^{5} )\ dt+\frac{X_t^{3}}{10} dW_t$
&\qquad $\kappa < -4$ \\
\end{tabular}
\end{center}
with two of them, {\bf Cases 2} and {\bf 3},  that are not satisfying \ref{C2:H5}.

Moreover, {\bf Case 3} satisfies assumptions of Theorem 2.1 in \cite{Kloeden} that states that  the approximated moments by the  Euler-Maruyama scheme and the strong $L^p$-error associated to moment-approximations diverges. Also in \cite{Kloeden}, the authors prove the divergence in the weak sense for  the $p$-th moments of the Euler-Maruyama scheme in that case.

From Proposition \ref{prop:XMoments} and Lemma \ref{lem:Negative}, the triplets $(B_2,\sigma,\alpha)$ in
{\bf Case 1} to {\bf Case 4},  guarantee the finiteness of the expectation $\EE f(X_T)$ for each test function in the set $\{x,x^2,\exp(-x^2)\}$.

\paragraph{Computation of the reference values. }
For both test functions $f(x)=x$ and $f(x)=x^2$,  reference values of $\EE[X_T]$ and $\EE[X_T^2]$ are computed analytically  for {\bf Case 1} to {\bf Case 3},  (see details in  \cite[Chapter 1]{kerlyns} and Appendix \ref{appendix:numerical}). For the others cases, the reference values are computed based on a $n_0$-Monte Carlo method combined with  the scheme \eqref{eq:Explicit exp-Euler}, $n_0 = 10^7$  and  $\Delta t_{\text{ref}} =2^{-14}$:
\[\EE[f(X_T)]\approx\frac{1}{n_0}\sum_{i=1}^{n_0}f(\overline{X}_T(\omega_i,\Delta t_{\text{ref}})).\]

Numerical results are shown in Table \ref{tab:C2experiment1}, where we can observe the rate of convergence of order one, except for Case 3 with test function $f(x)=x^2$, in all the rows corresponding to the selection of bounded/unbounded test functions: the error is divided by 2 when going from left to right,  even if some saturation can be observed for the smallest error values $(p=8,9)$ when Monte Carlo error starts to be dominant.
\begin{figure}[H]%[ht!]
\centering
{\includegraphics[width=\textwidth]{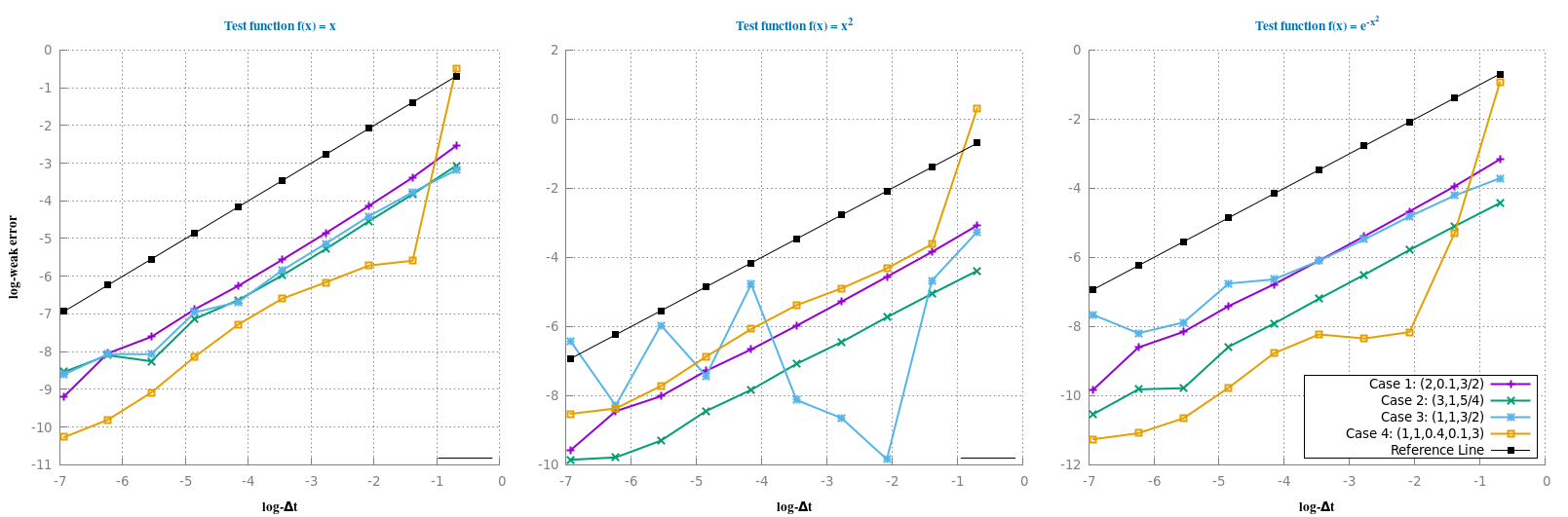}}
\caption{ Weak approximation error for the exponential-Euler scheme applied to  \eqref{eq:proto_CEV_with_const}, with  {\bf Case 1  to 4} (in log-log scale), the weak error is compared with the reference slope of order 1 (black).\label{im:C2experiment1}}
\end{figure}

\begin{table}[H]%[ht]
\centering
{\footnotesize{
\begin{tabular}{l| l l l l l l l l}
\toprule
\multicolumn{9}{c}{Weak Error with $\Delta t={2^{-p}}$, for $p=2,\ldots,9$}\\
\hline
\multicolumn{9}{c}{\tabhead{Case 1: $(B_2,\sigma,\alpha) = (2,\frac{1}{10},\frac32)$ and \ref{C2:H5} is valid.}}\\
\hline
\tabhead{{\small Test function}}
& \tabhead{$p=2$} &\tabhead{$p=3$}& \tabhead{$p=4$} & \tabhead{$p=5$} &\tabhead{$p=6$} & \tabhead{$p=7$} & \tabhead{$p=8$}&\tabhead{$p=9$}\\
\midrule
$f(x)=x$ & 3.397e-2& 1.606e-2& 7.756e-3& 3.823e-3& 1.923e-3& 1.033e-3& 4.965e-4&3.199e-4 \\
$f(x)=x^2$ & 2.147e-2& 1.043e-2&5.102e-3 &2.529e-3 &1.277e-3 &6.864e-4 &3.297e-4&2.131e-4\\
$f(x)=e^{-x^2}$ & 1.94e-2& 9.378e-3& 4.568e-3&2.258e-3 &1.135e-3 &6.06e-4 &2.874e-4 &1.829e-4\\
\hline
\multicolumn{9}{c}{\cellcolor{blue!25}\tabhead{Case 2: $(B_2,\sigma,\alpha) =(3,1, \frac54)$ and \ref{C2:H5} is  not valid.}}\\
\hline
\tabhead{{}}
& \tabhead{$p=2$} &\tabhead{$p=3$}& \tabhead{$p=4$} & \tabhead{$p=5$} &\tabhead{$p=6$} & \tabhead{$p=7$} & \tabhead{$p=8$}&\tabhead{$p=9$}\\
\midrule
$f(x)=x$& 2.179e-2& 1.069e-2& 5.07e-3& 2.529e-3 & 1.32e-3& 8.021e-4& 2.598e-4&3.043e-4\\
$f(x)=x^2$ &6.412e-3 &3.243e-3 &1.582e-3 &8.397e-4 &3.965e-4 &2.148e-4 &9.101e-5 &5.065e-5 \\
$f(x)=e^{-x^2}$ & 6.113e-3&3.07e-3 &1.5e-3 &7.5e-4 &3.65e-4 &1.868e-4 &5.625e-5 &5.439e-5  \\
\hline
\multicolumn{9}{c}{\cellcolor{violet!25} \tabhead{Case 3: $(B_2,\sigma,\alpha) =(1,1, \frac32)$ and \ref{C2:H5} is  not valid.}}\\
\hline
\tabhead{{}} & \tabhead{$p=2$} &\tabhead{$p=3$}& \tabhead{$p=4$} & \tabhead{$p=5$} &\tabhead{$p=6$} & \tabhead{$p=7$} & \tabhead{$p=8$}&\tabhead{$p=9$}\\
\midrule
$f(x)=x$& 2.3e-2& 1.219e-2&5.864e-3 &2.893e-3 & 1.255e-3& 9.507e-4& 3.13e-4& 3.14e-4\\
\cellcolor{red!25} $f(x)=x^2$ \cellcolor{red!25}  &\cellcolor{red!25}  9.408e-3 &\cellcolor{red!25}   5.302e-5 & \cellcolor{red!25}  1.749e-4 & \cellcolor{red!25}  2.956e-4 &\cellcolor{red!25}  8.41e-3 & \cellcolor{red!25}  5.95e-4 &\cellcolor{red!25}  2.574e-3 & \cellcolor{red!25}  2.52e-4  \\
$f(x)=e^{-x^2}$ & 1.485e-2&8.108e-3 &4.162e-3 &2.248e-3 &1.31e-3 &1.164e-3 &3.78e-4 &2.762e-4   \\
\hline
\multicolumn{9}{c}{\cellcolor{blue!25} \tabhead{Case 4: $(B_0,B_1,B_2,\sigma,\alpha) = (1,1,\frac25,\frac{1}{10},3)$ and \ref{C2:H5} is not valid.}}\\
\hline
\tabhead{{}}
& \tabhead{$p=2$} &\tabhead{$p=3$}& \tabhead{$p=4$} & \tabhead{$p=5$} &\tabhead{$p=6$} & \tabhead{$p=7$} & \tabhead{$p=8$}&\tabhead{$p=9$}\\
\midrule
$f(x)=x$ & 3.741e-3& 3.292e-3& 2.103e-3& 1.364e-3& 6.915e-4 & 2.936e-4& 1.131e-4& 5.497e-5\\
$f(x)=x^2$ & 2.687e-2&1.332e-2 &7.476e-3 &4.584e-3 &2.302e-3 &1.034e-3 &4.423e-4 &2.312e-4 \\
$f(x)=e^{-x^2}$ &5.027e-3 &2.846e-4 &2.372e-4 &2.666e-4 & 1.545e-4& 5.72e-5 &2.363e-5 &1.529e-5  \\
\bottomrule
\end{tabular}
}}
\caption{ Observed numerical weak error $|\EE[f(X_T)] - \frac{1}{n}\sum_{i=1}^{n}f(\overline{X}_T(\omega_i, 2^{-p}))|$.\label{tab:C2experiment1}}
\end{table}
This behavior is also illustrated in Figure \ref{im:C2experiment1}, plotting  the  obtained results in a log-log scale.  This confirms that our proofs can certainly be extended for a larger class of test functions, and model parameters.
In particular, we highlight {\bf Case 3} that converges weakly with order one for $f(x) = x, \exp\{-x^2\}$, even if \ref{C2:H5} is not fulfilled, and even moreover   we know that the classical Euler-Maruyama scheme is  strongly diverging (as stated in \cite{Kloeden}) in this case.
\medskip

Additional  numerical results  are proposed in  Appendix \ref{appendix:numerical} where, in particular, some comparisons on the exp-ES with other dedicated numerical schemes are presented.

\section{Analysis of the backward Kolmogorov PDE related to \eqref{eq:IntroSDE}}\label{sec:CauchyP}

This section is devoted to the regularity analysis on the solution of the backward Kolmogorov PDE related to \eqref{eq:IntroSDE}. Stochastic analysis is used here to establish key estimates on the solution of the PDE.  We consider the flow process $(X_t^x;0\leq t\leq T)$ starting from $x>0$:
\begin{equation}\label{eq:flux}
X_t^x = x +  \int_0^tb(X^x_s)ds + \sigma\int_0^t(X_s^x)^{\alpha}dW_s, ~~\forall \;t\in(0,T].
\end{equation}
According to the Feynman-Kac representation theorem (see e.g. \cite[Chap. V]{KarShr-88}), provided that $x\mapsto X_{T-t}^x$ and a given  $f$ are smooth enough, the function
\begin{align}\label{eq:feymann-kac}
u(t,x)=\EE[f(X_{T-t}^x)]
\end{align}
is a natural candidate to be the classical solution to the backward Kolmogorov PDE:
\begin{align}\label{eq:KolPDE}
\left\{
\begin{aligned}
& \frac{\partial u}{\partial t}(t,x) + b(x)\frac{\partial u}{\partial x}(t,x) + \frac{\sigma^2}2\;x^{2\alpha}\; \frac{\partial^2 u}{\partial ^2x}(t,x)=0,~\;\mbox{ for all }~(t,x)\in[0,T)\times\RR^+, \\
& u(T,x)=f(x),\mbox{ for all }~\;x\in[0,+\infty).
\end{aligned}
\right.
\end{align}

\begin{prop}\label{prop:KolPDE}
Let $f\in\mathcal{C}^4_b(\RR^+)$. Then, assuming \ref{C2:H1},\ref{C2:H_loclip'},\ref{C2:H_poly'} and \ref{C2:H5}, the function $u(t,x)$ in \eqref{eq:feymann-kac}  is a  solution to the PDE \eqref{eq:KolPDE} of class $\mathcal{C}^{1,4}([0,T]\times\RR^+)$, and there exists a finite constant $C$ such that
\begin{equation}\label{eq:PDE bounds1}
\begin{array}{ll}
&\|u\|_{L^\infty((0,T)\times\RR^+)} + \;\left\|\frac{\partial u}{\partial x}\right\|_{L^\infty((0,T)\times\RR^+)}\leq C,\\
\mbox{and for all } x\in (0,+\infty),&\sup_{t\in[0,T]}\big|\frac{\partial u}{\partial t}\big|(t,x)\leq C\;\big(1+x^{2\alpha}\big),\\
&\sup_{t\in[0,T]}\big|\frac{\partial^2 u}{\partial x^2}\big|(t,x)\leq C\big(1+x^{\overline{\gamma}_{(2)}+1}+x^{-\underline{\gamma}_{(2)}}\big),\\
&\sup_{t\in[0,T]}\big|\frac{\partial^3 u}{\partial x^3}\big|(t,x)\leq C\;\big(1+x^{\overline{\beta}}+x^{-\underline{\beta}}\big),\\
&\sup_{t\in[0,T]}\big|\frac{\partial^4 u}{\partial x^4}\big|(t,x)\leq C\;\big(1+x^{\overline{\overline{\beta}}}
+x^{-~\underline{\underline{\beta}}}\big),
\end{array}
\end{equation}
where $\overline{\gamma}_{(i)}$ and $\underline{\gamma}_{(i)}$ are as in \ref{C2:H_loclip'}, $\overline{\overline{\beta}}$ as in \ref{C2:H5},
and $\overline{\beta}$, $\underline{\beta}$ are $\underline{\underline{\beta}}$ are given by
\begin{align*}
&\overline{\beta}=2(\overline{\gamma}_{(2)}+1)\vee(1+\overline{\gamma}_{(3)}),\,\,\,\,\underline{\beta}=2\underline{\gamma}_{(2)} \vee\underline{\gamma}_{(3)}\vee (\underline{\gamma}_{(2)}+3-2\alpha),\\
&\underline{\underline{\beta}}
= \big\{\big(\underline{\gamma}_{(2)} \vee (3 -2\alpha)\big)  + \underline{\beta} \big\} \vee \big\{\underline{\gamma}_{(2)} + \big(\underline{\gamma}_{(3)} \vee (4 -2\alpha)\big)\big\}\vee \underline{\gamma}_{(4)}.
\end{align*}
\end{prop}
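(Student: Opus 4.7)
The plan is to establish the $\mathcal{C}^{1,4}$ regularity of $u(t,x)=\EE[f(X_{T-t}^x)]$ by first analyzing the regularity of the flow $x\mapsto X_t^x$ and then transferring it to $u$ via the chain rule, using a Girsanov change of measure to circumvent the explosion of iterated flow derivatives.

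First, I would show that under \ref{C2:H1}, \ref{C2:H_loclip'} and \ref{C2:H_poly'} the flow $x\mapsto X_t^x$ of \eqref{eq:flux} is a.s.\ four times continuously differentiable on $(0,+\infty)$, with successive variations $J_t:=\partial_x X_t^x$, $K_t:=\partial_x^2 X_t^x$, $L_t:=\partial_x^3 X_t^x$, $M_t:=\partial_x^4 X_t^x$ solving the linear SDEs obtained by formal differentiation of \eqref{eq:flux}. The first variation admits the explicit exponential representation
\begin{equation*}
J_t = \exp\Big(\int_0^t b'(X_s^x)\,ds - \tfrac{\sigma^2\alpha^2}{2}\int_0^t(X_s^x)^{2(\alpha-1)}\,ds + \sigma\alpha\int_0^t(X_s^x)^{\alpha-1}\,dW_s\Big),
\end{equation*}
whose positive and negative moments of all orders are finite by combining Lemma \ref{lem:Exponentials} with the upper bound $b'(x)\leq B_1'-B_2' x^{2(\alpha-1)}$ from \ref{C2:H_poly'}. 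The variations $K,L,M$ satisfy linear SDEs whose source terms involve products of lower-order variations, of the derivatives $b^{(i)}(X^x)$ (polynomially controlled with possibly negative powers via \eqref{eq:poly_bi}), and of $(X_s^x)^{\alpha-k}$ for $k\leq 4$; Duhamel resolution yields expressions of the form $J_t\int_0^t J_s^{-1}(\cdots)\,ds + J_t\int_0^t J_s^{-1}(\cdots)\,dW_s$.

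Second, differentiating inside the expectation gives
\begin{equation*}
\partial_x u = \EE[f'(X_{T-t}^x)J_{T-t}],\qquad \partial_x^2 u = \EE[f''(X_{T-t}^x)J_{T-t}^2] + \EE[f'(X_{T-t}^x)K_{T-t}],
\end{equation*}
and analogous but increasingly involved expressions for $\partial_x^3 u$ and $\partial_x^4 u$. A direct $L^p$-estimate of $K,L,M$ would require simultaneously very high positive moments of $X^x$ (through $b^{(i)}$ and $X^{2(\alpha-1)}$) and high negative moments (through $(X^x)^{\alpha-k}$ and $J^{-1}$), forcing constraints stricter than \ref{C2:H5}. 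The central trick, adapted from \cite{BD15}, is to introduce the change of measure with Radon-Nikodym density $\mathcal{E}\big(\sigma\alpha\int_0^\cdot(X_s^x)^{\alpha-1}\,dW_s\big)_T$, whose exponential moments are controlled by Lemma \ref{lem:Exponentials} under the precise lower bounds on $B_2,B_2'$ in \ref{C2:H5}. Under the new measure, the stochastic integrals in $J$ and $J^{-1}$ collapse into deterministic drift terms, so the iterated flow derivatives can be re-expressed, once pulled back under $\PP$, as expectations that involve only one differentiation of the flow together with polynomial weights in $X^x$, values of $f^{(j)}(X_{T-t}^x)$, and exponentials of $\int(X_s^x)^{2(\alpha-1)}\,ds$; all of these are controllable by Proposition \ref{prop:XMoments}, Lemma \ref{lem:Negative} and Lemma \ref{lem:Exponentials} under \ref{C2:H5}.

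Third, I would apply H\"older's inequality to each resulting expectation, carefully matching the positive powers accumulated from successive $b^{(i)}$ with the negative powers produced by $(X_s^x)^{\alpha-k}$ and the Duhamel weights. This accounting produces exactly the exponent pairs $(\overline{\gamma}_{(2)}+1,\underline{\gamma}_{(2)})$, $(\overline{\beta},\underline{\beta})$ and $(\overline{\overline{\beta}},\underline{\underline{\beta}})$ stated in \eqref{eq:PDE bounds1}, while the uniform bounds on $u$ and $\partial_x u$ come from the boundedness of $f,f'$. Continuity in $(t,x)$ of all the derivatives follows from dominated convergence using the same moment bounds as dominating functions.

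Finally, to close the proof that $u\in\mathcal{C}^{1,4}$ solves \eqref{eq:KolPDE}, I would apply It\^o's formula to $u(T-t,X_t^x)$ and use the Markov property to identify $t\mapsto u(T-t,X_t^x)$ as a martingale; reading off the drift gives the PDE in the classical sense, and the estimate on $\partial_t u$ follows immediately from the PDE combined with the already-established bounds on $\partial_x u$ and $\partial_x^2 u$. The main obstacle is the second step: one must verify that the Radon-Nikodym density has sufficient exponential moments for H\"older's inequality to close on all of $\partial_x^2 u,\partial_x^3 u,\partial_x^4 u$ simultaneously, and the constants $B_2,B_2'$ in \ref{C2:H5} are calibrated precisely so that every exponent required by this argument stays within the admissible range of Lemma \ref{lem:Exponentials}.
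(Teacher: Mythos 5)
Your high-level idea --- use a Girsanov transform built on $\mathcal{E}\big(\sigma\alpha\int_0^\cdot (X_s^x)^{\alpha-1}dW_s\big)$ to avoid paying for iterated flow derivatives --- is the right one, but the concrete plan has a genuine gap in how that transform is supposed to act. You first construct $K=\partial_x^2 X$, $L=\partial_x^3X$, $M=\partial_x^4X$ via Duhamel, which produces terms of the form $J_t\int_0^t J_s^{-1}(\cdots)\,ds+J_t\int_0^t J_s^{-1}(\cdots)\,dW_s$, and then claim a single change of measure collapses the stochastic integrals in $J$ and $J^{-1}$. It does not: the density $\mathcal{Z}_t$ cancels the exponential martingale in $J_t$ (giving $J_t\mathcal{Z}_t=\exp\{\int_0^t b'(X_s^x)ds\}$), but $J_tJ_s^{-1}\mathcal{Z}_t$ leaves a dangling exponential martingale on $[0,s]$ sitting inside a time integral, and nothing in your sketch removes it. The paper's proof never forms $K,L,M$ at all: it alternates (i) differentiate once under the expectation (Proposition \ref{prop:interchang}), producing exactly one factor $J^x(\lambda)$, (ii) use the Markov property and time homogeneity to fold the time-integrated remainder back into $\tfrac{\partial^j u}{\partial x^j}(t+s,X_s^x(\lambda))$ so that every $J_s^x(\lambda)$ appears with the full weight $\exp\{\int_0^s\cdots\}$ needed for cancellation, and (iii) change measure to kill that single $J$, landing on an expression of the canonical form \eqref{eq:DerivativeRewritten} in the shifted process $X^x(\lambda+\alpha)$ that is again free of flow derivatives. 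That Markov-property restructuring (see \eqref{eq:MarkovProp} and the integration by parts before \eqref{eq:d3udx2}) is the engine of the argument and is absent from your proposal. Relatedly, your assertion that $J$ has finite \emph{negative} moments of all orders is unjustified: \ref{C2:H_poly'} bounds $b'$ from above only, so $J_t^{-q}$ involves $\exp\{-q\int b'\}$ with $-b'(x)\leq C(1+x^{\overline{\gamma}_{(1)}+1})$ and $\overline{\gamma}_{(1)}+1$ possibly exceeding $2(\alpha-1)$, which is outside the reach of Lemma \ref{lem:Exponentials}; these negative moments are exactly what your Duhamel representation of $K,L,M$ would need.

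Two smaller points. First, even granting the $C^4$ regularity of the flow in $x$ (which is itself delicate for $1<\alpha<2$ since $\partial_x^2(x^\alpha)=\alpha(\alpha-1)x^{\alpha-2}$ and the standard smooth-flow theorems do not apply off the shelf, and which the paper deliberately never proves), the hypotheses \ref{C2:H5} are calibrated for the paper's scheme of incrementing $\lambda$ by $\alpha$ at each of at most four changes of measure; your scheme would need a different, and a priori stronger, calibration. Second, deriving the bound on $\partial_t u$ from the PDE together with the established bound on $\partial_x^2 u$ yields $C(1+x^{2\alpha+\overline{\gamma}_{(2)}+1})$, not the stated $C(1+x^{2\alpha})$; the paper obtains the sharper bound by applying It\^o's formula directly to $f(X_s^x)$ and using only the boundedness of $f'$, $f''$ together with the $2\alpha$-moment of $X^x$.
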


\subsection{Main lines for  the proof of Proposition
\ref{prop:KolPDE}}\label{sec:sketch}

The proof of Proposition \ref{prop:KolPDE}
follows the methodology used in \cite{BD15} that combines some adequate successive  changes of measure in the Feynman-Kac formula for $u$ and in its derivatives in order to kill some unsuitable term in the obtained expression for $\frac{\partial^i u}{\partial x^i}$ before to derive it again. More precisely,  the main hypothesis $\alpha > 1$ allows to derive at most one time the diffusion coefficient $x\mapsto x^\alpha$, and by extension $x\mapsto (X^{x}_t;\,0\leq t\leq T)$, before to potentially produce some negative power term for higher order derivative.  In contrast, Lemma  \ref{lem:Exponentials} embed a local Novikov condition which allows to control the exponential martingale for the first derivative of the diffusion only (the power $2(\alpha -1)$ corresponding to the quadratic variation of resulting from this derivative).
We present here, briefly and formally, how we can combine derivatives and change of measure to overcome higher order derivative of the diffusion before detailing the proof in the rest of this section and in the Appendix \ref{sec:app:proof_FK}.

Following \cite{BD15}, we introduce the family of processes $X^x(\lambda)$, with parameter $\lambda>0$, as the solution of the SDE:
\begin{equation}\label{eq:flux2}
X_t^x(\lambda) = x + \int_0^t \big\{b(X_s^x(\lambda))+\lambda \sigma^2\;(X_s^x(\lambda))^{2\alpha-1}\big\}ds + \sigma\int_0^t(X_s^x(\lambda))^{\alpha}dW_{s}, ~~\forall \;t\in(0,T].
\end{equation}
For each $\lambda>0$, Equation \eqref{eq:flux2} can be seen as a modification of \eqref{eq:flux} with a drift component $b^\lambda$ given by
$${b}^\lambda(x)=b(x)+\lambda \sigma^2\;x^{2\alpha-1}\leq B_1 x -B_2^\lambda x^{2\alpha-1}+b(0),$$ where $B_2^\lambda:=B_2-\lambda\sigma^2$. Due to the locally Lipschitz property of the coefficients (and their derivatives) in \eqref{eq:flux2}, the process $X^x(\lambda)$ is continuously differentiable w.r.t. $x$ (see Protter~\cite[Thm V.39]{Protter-04}) for $\mathbb{P}$-almost all $\omega\in\Omega$. Therefore, we can define the derivative of the flow with respect to the initial condition $x$:
$$J_t^x(\lambda)=\frac{dX_t^x}{dx}(\lambda),~\;0\leq t\leq T,$$
as the solution to the SDE, for $t\in(0,T]$,
\begin{align}\label{eq:derivativeSDE}
\left\{
\begin{aligned}
&\frac{dJ_t^x(\lambda)}{J_t^x(\lambda)} = \big[b'(X_t^x(\lambda))+\lambda \sigma^2(2\alpha-1)\;(X_t^x(\lambda))^{2(\alpha-1)}\big]dt+\alpha \sigma(X_t^x(\lambda))^{\alpha-1}dW_t,\\
&J_0^x(\lambda)=1.
\end{aligned}
\right.
\end{align}
Whenever the process $(\int_0^t\left({X_s^x}(\lambda)\right)^{\alpha-1}dW_s;~0\leq t\leq T)$ is a square integrable martingale, $J^x(\lambda)$ admits the following exponential form (see e.g. \cite[Theorem V.52]{Protter-04})
\begin{align}\label{eq:expoJlambda}
\begin{aligned}
J_t^x(\lambda)=&\exp\big\{\int_0^t[b'(X_s^x(\lambda))+\lambda \sigma^2(2\alpha-1)X_s^x(\lambda)^{2(\alpha-1)}-\tfrac{\alpha^2 \sigma^2}2 X_s^x(\lambda)^{2(\alpha-1)}]ds\\
& \hspace{1.4cm}+\,\alpha \sigma\int_0^t X_s^x(\lambda)^{\alpha-1}dW_s\big\}.
\end{aligned}
\end{align}
Now, we may identify the first order derivative of $u(t,x)=\EE[f(X^x_{T-t}(0))]$ as:
\begin{equation}
\frac{\partial u}{\partial x}(t,x)  = \EE[f'(X_{T-t}^x(0))J_{T-t}^x(0)].\label{eq:ExchangeDerExpFormal}
\end{equation}
Before computing the second derivative, we change the measure in the expectation above in order to eliminate $J_{T-t}^x(0)$ and avoid the problem of the  {\it a priori} control of  $\tfrac{d }{dx}J_{T-t}^x(0)$: Consider the Radon-Nikodym density
\begin{align*}
 \tfrac{d \mathbb{Q}^{\alpha}}{d\mathbb{P}}\Big|_{\mathcal{F}_t}:= \tfrac{1}{\mathcal{Z}_t^{(0,\alpha)}},\quad \mbox{ with } \quad
\mbox{$ \mathcal{Z}_t^{(0,\alpha)}=\exp\{ -\alpha\sigma \int_0^t (X_{s}^x(0))^{\alpha-1}dB_s^{\alpha} - \tfrac{\alpha^2\sigma^2}2 \int_0^t(X_{s}^x(0))^{2(\alpha-1)}ds\},$}
\end{align*}
for $(B_t^\alpha;\,0\leq t\leq T)$ the standard $\mathbb{Q}^{\alpha}$-Brownian motion given by ${B_t^{\alpha}=W_t-\alpha\sigma\int_0^t (X_{s}^x(0))^{\alpha-1}ds}$.
Then
\[
\frac{\partial u}{\partial x}(t,x)= \EE_{\mathbb{Q}^{\alpha}}\big[f'(X_{T-t}^x(0))\,J_{T-t}^x(0)\,\mathcal{Z}_{T-t}^{(0,\alpha)}\big].
\]
From the explicit form of the process $J^x(0)$, we recognize
 \begin{align*}
\mbox{$ J_{T-t}^x(0)
=\exp\big\{ \int_0^{T-t}b'(X_s^x(0))ds +\alpha\sigma \int_0^{T-t}(X_s^x(0))^{\alpha-1}dB_s^\alpha +\tfrac{\alpha^2\sigma^2}{2}\int_0^{T-t}(X_s^x(0))^{2(\alpha-1)}ds\big\},
$}
\end{align*}
and hence,
$J_{T-t}^x(0)\,\mathcal{Z}_{T-t}^{(0,\alpha)} = \exp \big\{ \int_0^{T-t}b'(X_s^x(0))ds\big\}$.
Moreover, from the identification \\
$\textit{Law}^{\mathbb{Q}^{\alpha}}(X^x(0))=\textit{Law}^\mathbb{P}(X^x(\alpha))$, we can rewrite $\tfrac{\partial u}{\partial x}$ as
\begin{equation}\label{eq:du/dx2}
\mbox{$ \frac{\partial u}{\partial x}(t,x)=\EE\big[f'(X_{T-t}^x(\alpha))\, \exp\{ \int_0^{T-t}b'(X_s^x(\alpha))ds\}\big]$} ,
\end{equation}
which is continuously differentiable in $x$, with a derivative that depends on the derivative of $f'$ and  $b'$ only. The same procedure can be iterated for other higher order derivative of $x\mapsto u(t,x)$. The price to pay here resides in the assumptions needed to get, at each iteration step, the estimates stated in  Proposition~\ref{prop:XMoments} and Lemmas \ref{lem:Negative} and \ref{lem:Exponentials}, which are more constraining  to satisfy according to the  increasing value $\lambda$ introduced after each  successive changes of measure, and corresponding to successive derivatives.  Typically, while we add some unbounded term $\lambda\sigma^2 x^{2\alpha -1}$ to $b$,  the constant $B_2$ in \ref{C2:H_poly}, ensuring the wellposedness for  \eqref{eq:IntroSDE},	 must be strengthened to also ensure the wellposedness and the finiteness of the moments of the solution to \eqref{eq:flux2}, as well as some moments of $J^x(\alpha)$. 
This strengthening on $B_2$ is summarized in the following corollary, combining the  results of  Proposition~\ref{prop:XMoments} and Lemmas \ref{lem:Negative} and \ref{lem:Exponentials}:
\begin{cor}\label{cor:flux}
Assume \ref{C2:H1}, \ref{C2:H_loclip} and \ref{C2:H_poly}. Then, for any $\lambda>0$ such that $\lambda < \tfrac{B_2}{ \sigma^2}$, (and so $B_2^\lambda = B_2 - \lambda\sigma^2 >0$),  there exists a unique positive strong solution to \eqref{eq:flux2}.
For all $q>0$ and  $p>0$ such that $p \leq \frac12+\frac{B^\lambda_2}{\sigma^2}$, this solution further satisfies:
\begin{equation}\label{eq:uniformMo2}
\sup_{t\in[0,T]}\EE[(X_t^x(\lambda))^{2p}]<C_p\;(1+x^{2p}),~~\sup_{t\in[0,T]}\EE[(X_t^x(\lambda))^{-2q}]<C_q(1+x^{-2q}).
\end{equation}
The non-negative constants $C_p$, $C_q$ do not depend on $x$, but $C_p$, respectively $C_q$, may depend on $p$, respectively $q$.

If $b(0)=0$, then for all $\mu\leq\frac{(\sigma^2+ 2 B_2^\lambda)^2}{8\sigma^2}$,
\begin{equation}\label{eq:uniformExp}
\sup_{t\in[0,T]}\EE\big[\exp\{\mu\int_0^tX_s^{2(\alpha-1)}(\lambda)ds\}\big]<+\infty.
\end{equation}
The estimate \eqref{eq:uniformExp} still holds when $b(0)>0$ under the restriction that $\alpha> \frac32$,  and  $\mu<B_2^\lambda\sigma^2$.
\end{cor}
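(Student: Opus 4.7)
The plan is to deduce the corollary as a direct corollary of Proposition \ref{prop:XMoments} and Lemmas \ref{lem:Negative}--\ref{lem:Exponentials} applied to the perturbed SDE \eqref{eq:flux2}, once we have checked that its modified drift $b^\lambda(x) = b(x) + \lambda \sigma^2 x^{2\alpha -1}$ still fulfills \ref{C2:H_loclip} and \ref{C2:H_poly} with the updated constant $B_2^\lambda = B_2 - \lambda\sigma^2$. So the proof reduces to two verifications followed by an invocation.

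First, for the locally Lipschitz property, I would observe that the map $x \mapsto x^{2\alpha - 1}$ on $\mathbb{R}^+$ has derivative $(2\alpha-1)x^{2(\alpha-1)}$, which is pointwise dominated by $C(1 + x^{2(\alpha-1)})$. Via a mean-value argument (or Lemma \ref{lem:LocallyLipschitz} with $\underline{\gamma}=0$), this yields that $x \mapsto \lambda \sigma^2 x^{2\alpha-1}$ is $2(\alpha-1)$-locally Lipschitz in the sense of Definition \ref{def:Lipschitz}. Since the class of $2(\alpha-1)$-locally Lipschitz functions is closed under sums, $b^\lambda$ inherits the same local Lipschitz property as $b$, and $b^\lambda(0) = b(0) \geq 0$, so \ref{C2:H_loclip} holds. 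For \ref{C2:H_poly}, adding the perturbation gives
\[
b^\lambda(x) \leq B_1 x - B_2 x^{2\alpha - 1} + b(0) + \lambda \sigma^2 x^{2\alpha - 1} = B_1 x - B_2^\lambda x^{2\alpha - 1} + b(0).
\]
The restriction $\lambda < B_2/\sigma^2$ ensures $B_2^\lambda > 0$, so \ref{C2:H_poly} is satisfied (with $B_1$ unchanged and $B_2$ replaced by $B_2^\lambda$).

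Second, I would invoke Proposition \ref{prop:XMoments} applied to \eqref{eq:flux2}, which yields the existence and uniqueness of a strictly positive strong solution $X^x(\lambda)$ together with the positive moment bound \eqref{eq:moments_cont} where $B_2$ is now $B_2^\lambda$; this produces the $2p$-moment estimate in \eqref{eq:uniformMo2} for $2p \leq 1 + 2B_2^\lambda/\sigma^2$, i.e. $p \leq \tfrac12 + B_2^\lambda/\sigma^2$. Lemma \ref{lem:Negative} applied to \eqref{eq:flux2} then provides the negative moment bound for all $q>0$. Finally, Lemma \ref{lem:Exponentials} applied to \eqref{eq:flux2} furnishes the exponential moment estimate \eqref{eq:uniformExp} with the sharp threshold $\mu \leq (\sigma^2 + 2B_2^\lambda)^2/(8\sigma^2)$ when $b(0) = 0$, and with $\mu < B_2^\lambda \sigma^2$ under the additional constraint $\alpha > 3/2$ when $b(0) > 0$.

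I do not anticipate a genuine obstacle in this proof: it is essentially a packaging statement showing that Section \ref{sec:Analytical}'s results transfer mechanically under the drift perturbation. The only detail to monitor is the constant bookkeeping --- in particular, tracking that the locally Lipschitz constant for $b^\lambda$ remains independent of $x$ but acquires a mild dependence on $\lambda$, so that the constants $C_p, C_q$ in \eqref{eq:uniformMo2} may depend on $\lambda$ even though they do not depend on $x$. This dependence is harmless for the later use of the corollary in the Kolmogorov regularity argument, since $\lambda$ takes only finitely many prescribed values ($0, \alpha, 2\alpha, \ldots$) along the successive changes of measure sketched in Section \ref{sec:sketch}.
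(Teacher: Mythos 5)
Your proposal is correct and follows exactly the paper's (unwritten) argument: the paper introduces $b^\lambda(x)=b(x)+\lambda\sigma^2 x^{2\alpha-1}\leq B_1x-B_2^\lambda x^{2\alpha-1}+b(0)$ just before the corollary and states that the corollary is obtained by combining Proposition \ref{prop:XMoments} and Lemmas \ref{lem:Negative} and \ref{lem:Exponentials} applied to \eqref{eq:flux2} with $B_2$ replaced by $B_2^\lambda$. Your two verifications (that $b^\lambda$ remains $2(\alpha-1)$-locally Lipschitz and satisfies \ref{C2:H_poly} with $B_2^\lambda>0$) are precisely the bookkeeping needed, and your remark on the $\lambda$-dependence of the constants is accurate and harmless.
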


In the rest of Section \ref{sec:CauchyP}, we address rigorously the formal steps described above, validating first the representation \eqref{eq:ExchangeDerExpFormal} in the next section, detailing the change of measure leading to \eqref{eq:du/dx2}, before finally completing the proof of Proposition \ref{prop:KolPDE}

\subsection{Interchanging derivative and expectation}

The (sufficient) conditions for the possible interchange between expectation $\EE$ and derivative with respect to the initial condition $\tfrac{\partial }{\partial x}$ are stated in following:
\begin{prop}\label{prop:interchang}
Let $\Phi,g,h\in\mathcal{C}([0,+\infty))$ some continuously differentiable functions such that $\Phi$ is bounded and has bounded first derivative in $[0,+\infty)$, $g$ is bounded from above, and the functions $g'$ and $h$ satisfy the following growth conditions
\[|h'(x)|\leq C(1+|x|^{\rho_0}+x^{-\rho_1}),\quad\quad |g'(x)|\leq C(1+x^{\rho_2}+x^{-\rho_3}),\]
\[|h(x)|\leq C(1+x^{\rho_4}+x^{-\rho_5}),\]
for all $x>0$ and some non-negative constants $\rho_i$, with $i=0,1,\ldots,5$.
Assume \ref{C2:H1}, \ref{C2:H_loclip'}, \ref{C2:H_poly'} and
\begin{equation}\label{eq:Lambda1}
\max\{\tfrac12, 2(\alpha -1), \rho_0,~\rho_2+\rho_4,~ \overline{\gamma}_{(1)}\}\leq\tfrac12 + \tfrac{B^\lambda_2}{\sigma^2},\quad\mbox{ and }\quad (2\alpha-1)\lambda+\tfrac{\alpha^2}{2} 5\leq \tfrac{B_2'}{\sigma^2}.\tag{\bf $\Lambda$.1}
\end{equation}
Then for any $\lambda\geq0$, the function defined by
$$v(t,x)=\EE\big[\Phi(X_t^x(\lambda))\exp\{\int_0^t g(X_s^x(\lambda)) ds\}\big]+\int_0^t\EE\big[h(X_s^x(\lambda))\exp\{\int_0^s g(X_r^x(\lambda)) dr\}\big]ds,$$
is continuously differentiable in $x$, with	
\begin{align*}\label{eq:interchng}
	\frac{\partial v}{\partial x} (t,x)&=\EE\left[\exp\left\{\int_0^t g(X_s^x(\lambda)) ds\right\}\left(\Phi'(X_t^x(\lambda))J_t^x(\lambda)+\Phi(X_t^x(\lambda))\int_0^t g'(X_s^x(\lambda)) J_s^x(\lambda) ds\right)\right]\\
	&+\int_0^t \EE\left[\exp\{\int_0^s g(X_r^x(\lambda)) dr\}\left(h'(X_s^x(\lambda))J_s^x(\lambda)+h(X_s^x(\lambda))\int_0^s g'(X_r^x(\lambda)) J_r^x(\lambda) dr\right)\right]ds.
	\end{align*}
\end{prop}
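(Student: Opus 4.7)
I will split $v=v_1+v_2$ into its expectation part and its time-integrated part and differentiate each via dominated convergence applied to the difference quotient. Under \ref{C2:H_loclip'}, the coefficients of \eqref{eq:flux2} are $\mathcal{C}^1$ with locally Lipschitz derivatives, so by Protter~\cite[Thm.~V.39]{Protter-04} the flow $x\mapsto X_t^x(\lambda)$ is $\PP$-a.s.\ continuously differentiable, with derivative $J_t^x(\lambda)$ solving \eqref{eq:derivativeSDE} and admitting the explicit form \eqref{eq:expoJlambda}. The identity
\begin{align*}
\frac{X_t^{x+h}(\lambda)-X_t^x(\lambda)}{h}=\int_0^1 J_t^{x+\theta h}(\lambda)\,d\theta,
\end{align*}
together with the chain rule applied to $\Phi(X_t^x(\lambda))\exp\{\int_0^t g(X_s^x(\lambda))\,ds\}$ and to the integrand of $v_2$, produces pointwise on $\Omega$ the integrands displayed in the stated formula for $\partial v/\partial x$. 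What remains is to pass the limit $h\to 0$ inside $\EE$ and inside $\int_0^t ds$.

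\textbf{Dominating majorant.} Since $g$ is bounded above, the exponential factors $\exp\{\int g(X^y(\lambda))\,ds\}$ are deterministically bounded, $\Phi$ and $\Phi'$ are bounded by hypothesis, and the growth assumptions make $g',h,h'$ dominated by positive and negative powers of $X_s^y(\lambda)$ with exponents $\rho_0,\ldots,\rho_5$. Consequently the difference quotient is majorized, uniformly for $h$ in a neighborhood of $0$, by products of powers of $X_s^{x+\theta h}(\lambda)$ with at most one factor $J_s^{x+\theta h}(\lambda)$. After a Cauchy--Schwarz split separating the $X$-factor from the $J$-factor, the positive and negative moment bounds \eqref{eq:uniformMo2} of Corollary~\ref{cor:flux} control the $X$-power part; the first inequality in \eqref{eq:Lambda1} is calibrated precisely so that each positive exponent arising (namely $\rho_0$, the product $\rho_2+\rho_4$ coming from $\Phi\cdot g'$, $\overline{\gamma}_{(1)}$ coming from $b'$ inside the drift of $J$, together with the ancillary $\tfrac12$ and $2(\alpha-1)$) falls within the admissible range $p\leq \tfrac12+\tfrac{B_2^\lambda}{\sigma^2}$.

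\textbf{Main obstacle: moments of $J^y(\lambda)$.} The last ingredient is the finiteness of $\EE[(J_t^y(\lambda))^q]$ for $q$ the Cauchy--Schwarz conjugate of the largest $X$-exponent used above. From \eqref{eq:expoJlambda} one writes $(J_t^y(\lambda))^q=\mathcal{M}_t\mathcal{R}_t$, where
\begin{align*}
\mathcal{M}_t=\exp\Bigl\{q\alpha\sigma\!\int_0^t\!(X_s^y(\lambda))^{\alpha-1}\,dW_s-\tfrac{q^2\alpha^2\sigma^2}{2}\!\int_0^t\!(X_s^y(\lambda))^{2(\alpha-1)}\,ds\Bigr\}
\end{align*}
is a local exponential martingale and $\mathcal{R}_t$ collects the drift contributions. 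Using $b'(x)\leq B_1'-B_2' x^{2(\alpha-1)}$ from \ref{C2:H_poly'},
\begin{align*}
\mathcal{R}_t\leq C\exp\Bigl\{\sigma^2\bigl(q\lambda(2\alpha-1)+\tfrac{q(q-1)\alpha^2}{2}-qB_2'/\sigma^2\bigr)\!\int_0^t\!(X_s^y(\lambda))^{2(\alpha-1)}\,ds\Bigr\},
\end{align*}
and a further Cauchy--Schwarz split of $\mathcal{M}_t$ together with Novikov's criterion reduce $\EE[(J_t^y(\lambda))^q]$ to an exponential moment of $\int_0^t (X_s^y(\lambda))^{2(\alpha-1)}\,ds$, exactly of the type controlled by Lemma~\ref{lem:Exponentials} via \eqref{eq:uniformExp}. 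The second inequality of \eqref{eq:Lambda1}, $(2\alpha-1)\lambda+\tfrac{5\alpha^2}{2}\leq B_2'/\sigma^2$, is precisely the condition keeping the resulting coefficient below the threshold of Lemma~\ref{lem:Exponentials} for the values of $q$ required by the domination step. Once integrability of the majorant is verified, dominated convergence delivers the announced formula for $\partial v/\partial x$; its joint continuity in $(t,x)$ follows by a further dominated convergence argument applied directly to the explicit formula, using pathwise continuity of $(x,t)\mapsto(X_t^x(\lambda),J_t^x(\lambda))$ and the very same moment bounds.
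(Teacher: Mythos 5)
Your overall architecture (a.s.\ differentiability of the flow via Protter, the identity $\tfrac{X_t^{x+h}-X_t^x}{h}=\int_0^1 J_t^{x+\theta h}(\lambda)\,d\theta$, then uniform-integrability/domination to pass to the limit) differs from the paper's, which works with the difference-quotient process $J^{x,\epsilon}_t=\tfrac{X_t^{x+\epsilon}-X_t^x}{\epsilon}$ and proves $L^2$-convergence $J^{x,\epsilon}_t\to J^x_t$ (Lemmas \ref{lem:Jxepsilon_Jx_bound} and \ref{lem:ConvergenceJ}) before splitting the error into four terms. That structural difference is acceptable in principle; the problem is in your ``Main obstacle'' paragraph, where the whole argument is made to rest on $\EE[(J_t^y(\lambda))^q]<\infty$ obtained via a Cauchy--Schwarz split of $\mathcal{M}_t$, Novikov's criterion and Lemma \ref{lem:Exponentials}. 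This step does not go through under the stated hypotheses. First, the second inequality of \eqref{eq:Lambda1} constrains $B_2'$ (the one-sided bound on $b'$ in \ref{C2:H_poly'}), whereas the exponential-moment threshold of Lemma \ref{lem:Exponentials} involves $B_2$ (namely $\mu\le\tfrac{(\sigma^2+2B_2^\lambda)^2}{8\sigma^2}$ when $b(0)=0$, or $\mu<B_2^\lambda\sigma^2$ with the extra restriction $\alpha>\tfrac32$ when $b(0)>0$); the two conditions are unrelated, so \eqref{eq:Lambda1} is \emph{not} ``precisely the condition keeping the resulting coefficient below the threshold''. Second, after you split off the stochastic exponential by Cauchy--Schwarz you are left with an exponential moment of $\int_0^t(X_s^y(\lambda))^{2(\alpha-1)}ds$ with coefficient of order $q^2\alpha^2\sigma^2$ (plus the contribution of $\mathcal{R}_t^2$); nothing in \eqref{eq:Lambda1} guarantees this lies below the admissible threshold, and when $b(0)>0$ your route would in addition force $\alpha>\tfrac32$, a restriction Proposition \ref{prop:interchang} does not make. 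So the key integrability claim is unproven as written.

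The repair is exactly the mechanism you already have on the page but abandon: choose $q$ with $(2\alpha-1)\lambda+\tfrac{\alpha^2}{2}(q-1)\le \tfrac{B_2'}{\sigma^2}$ (which \eqref{eq:Lambda1} grants up to $q=6$). Then, using $b'(x)\le B_1'-B_2'x^{2(\alpha-1)}$ in the exponential form \eqref{eq:expoJlambda}, the combined drift exponent in $(J_t^y(\lambda))^q$ is nonpositive, so $(J_t^y(\lambda))^q\le e^{qB_1'T}\,\mathcal{M}_t$ with $\mathcal{M}$ a nonnegative local martingale, hence a supermartingale, and $\EE[(J_t^y(\lambda))^q]\le e^{qB_1'T}$ uniformly in the starting point --- no Novikov condition and no exponential moments of $X$ are needed at this stage (this is the content of Lemma \ref{lem:Jxepsilon_Jx_bound}, stated there for $J^{x,\epsilon}$ and $J^x$). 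With that bound your uniform-integrability argument closes, since the a.s.\ convergence of the difference quotients follows from the $\mathcal{C}^1$ property of the flow; note also that your ``dominated convergence'' is really an a.s.-convergence-plus-uniform-integrability argument, because your majorants depend on $h$, so state it that way.
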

The proof of Proposition \ref{prop:interchang} can be summarized as follows:  the goal is to show (omitting the dependence to $\lambda$ in the notation during a few lines) that  $\EE[\tfrac{\Phi(X_t^{x+\epsilon})-\Phi(X_t^x)}{\epsilon}]$
tends to $\EE[\Phi'(X_t^x)J_t^x]$ when $\epsilon$ tends to  $0$. Introducing  the process
\[J_t^{x,\epsilon}:=\tfrac{X_t^{x+\epsilon}-X_t^x}{\epsilon},\]
we start with the following decomposition
\begin{align}\label{eq:decompo}
&\EE\big[\tfrac{\Phi(X_t^{x+\epsilon})-\Phi(X_t^x)}{\epsilon} -
\Phi'(X_t^x)J_t^x \big]=\EE\big[J_t^{x,\epsilon}\int _0^1\Phi'(X_t^x+
\theta \epsilon  J_t^{x,\epsilon})d\theta  - \Phi'(X_t^x) J_t^x \big]\\
&=\EE\big[(J_t^{x,\epsilon}-J_t^x)\int _0^1\Phi'(X_t^x+ \theta \epsilon
  J_t^{x,\epsilon})d\theta\big] +\EE\big[J_t^x\int _0^1\left(\Phi'(X_t^x+
\theta\epsilon J_t^{x,\epsilon})-\Phi'(X_t^x)\right)d\theta\big]. \nonumber
\end{align}
The convergence $\EE[\left|J_t^{x,\epsilon}-J_t^x\right|]\rightarrow 0$, when $\epsilon$ tends to 0, is thus a crucial step of the proof and we first establish this result.  The detailed proof of the proposition is postponed,  after having obtained some dedicated estimates on processes  $J^{x,\epsilon}(\lambda)$ and $J^{x}(\lambda)$ in the following subsection.

\subsubsection{Preliminary estimations}
The process $J^{x,\epsilon}(\lambda)$ given by $J_t^{x,\epsilon}(\lambda)=\tfrac{X_t^{x+\epsilon}(\lambda)-X_t^x(\lambda)}{\epsilon}$ satisfies the  linear SDE
\begin{align}\label{eq:Jxe SDE}
\mbox{$ J_t^{x,\epsilon}(\lambda) =1
+\int_0^tJ_s^{x,\epsilon}(\lambda)\big( \xi_s^\epsilon\, ds
+  \sigma\,\psi_s^\epsilon \, dW_s
+ \lambda \sigma^2 \phi_s^\epsilon \,ds\big), $}
\end{align}
where we have defined 
\begin{align*}
\mbox{$\xi_t^\epsilon:=\int_0^1b'(X_t^{x}(\lambda)+\theta\epsilon\;J_t^{x,\epsilon}(\lambda))d\theta,\qquad $} &
\mbox{$\psi_t^\epsilon:=\int_0^1(X_t^{x}(\lambda)+\theta\epsilon\;J_t^{x,\epsilon}(\lambda))^{\alpha-1}d\theta,$}\\
\mbox{and}\qquad &\mbox{$\phi_t^\epsilon:=\int_0^1(X_t^{x}(\lambda)+\theta\epsilon\;J_t^{x,\epsilon}(\lambda))^{2(\alpha-1)}d\theta.$}
\end{align*}

As $J_t^{x,\epsilon}(\lambda)>0$ a.s., these auxiliary processes may also write
\begin{align*}
\xi_t^\epsilon = \tfrac{b\left(X_t^{x+\epsilon}(\lambda)\right)-b\left(X_t^{x}(\lambda)\right)}{X_t^{x+\epsilon}(\lambda)-X_t^{x}(\lambda)},
\quad
\psi_t^\epsilon = \tfrac{(X_t^{x+\epsilon}(\lambda))^{\alpha}-(X_t^{x}(\lambda))^{\alpha}}{X_t^{x+\epsilon}(\lambda)-X_t^{x}(\lambda)},
\quad
\phi_t^\epsilon =\tfrac{(X_t^{x+\epsilon}(\lambda))^{2\alpha-1}-(X_t^{x}(\lambda))^{2\alpha-1}}{X_t^{x+\epsilon}(\lambda)-X_t^{x}(\lambda)}.
\end{align*}

The two following lemmas, whose proofs are postponed to  Appendix \ref{sec:app:proof_moments}, assert the finiteness of the moments of the processes $J^{x,\epsilon}(\lambda)$ and $J^{x}(\lambda)$ as well as the $L^p$ continuity of $x\mapsto X^{x}_t(\lambda)$ (Lemma \ref{lem:Jxepsilon_Jx_bound}) and the $L^2$ continuity of $x\mapsto J^{x}_t(\lambda)$ in Lemma \ref{lem:ConvergenceJ}.

\begin{lem}\label{lem:Jxepsilon_Jx_bound}
Assume \ref{C2:H1}, \ref{C2:H_loclip}, \ref{C2:H_poly'} and
\begin{equation}\label{eq:Lambda2}
\max\{\tfrac12,2(\alpha-1)\} \leq \tfrac12+\tfrac{B^\lambda_2}{\sigma^2}.\tag{\bf $\Lambda$.2}
\end{equation}
Then, for all $q>0$ such that $(2\alpha-1)\lambda+\tfrac{\alpha^2}{2}(q-1)\leq\tfrac{B_2'}{\sigma^2}$, the processes $J^{x,\epsilon}(\lambda)$ and $J^x(\lambda)$,  respective solutions to \eqref{eq:Jxe SDE} and \eqref{eq:derivativeSDE}, satisfy
\begin{align}\label{eq:Jx alpha2 and eq:Jxepsilon alpha}
\sup_{t\in[0,T]}\EE\big[(J_t^{x}(\lambda))^q \big]+ \sup_{t\in[0,T]}\EE\big[(J_t^{x,\epsilon}(\lambda))^q \big]\leq 2\exp\{qB_1'T\},
\end{align}
and
\begin{equation}\label{eq:Lalphaconvergence q}
\lim_{\epsilon \rightarrow 0} \sup_{t\in[0,T]}\EE\big[|X_t^{x+\epsilon}(\lambda)-X_t^x(\lambda)|^q\big] = 0.
\end{equation}
\end{lem}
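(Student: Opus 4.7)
Both linear SDEs \eqref{eq:derivativeSDE} and \eqref{eq:Jxe SDE} admit explicit Dol\'eans--Dade exponential representations. Raising them to the power $q$ and factoring out the Dol\'eans--Dade exponential of the $dW$-martingale part, one obtains the pathwise identity
\begin{align*}
(J_t^x(\lambda))^q = \mathcal{Z}_t^{(q\alpha)}\exp\bigl\{q\!\int_0^t\!b'(X_s^x(\lambda))\,ds + q\sigma^2\bigl((2\alpha-1)\lambda + \tfrac{\alpha^2(q-1)}{2}\bigr)\!\int_0^t\!(X_s^x(\lambda))^{2(\alpha-1)}ds\bigr\},
\end{align*}
where $\mathcal{Z}_t^{(q\alpha)}$ is the Dol\'eans--Dade exponential of $q\alpha\sigma\int_0^\cdot(X_s^x(\lambda))^{\alpha-1}dW_s$, together with a structurally identical identity for $(J_t^{x,\epsilon}(\lambda))^q$ in which $b'(X_s^x(\lambda))$, $(X_s^x(\lambda))^{2(\alpha-1)}$, $(X_s^x(\lambda))^{2(\alpha-1)}$ are respectively replaced by $\xi_s^\epsilon$, $\phi_s^\epsilon$, $(\psi_s^\epsilon)^2$, and where the relevant stochastic exponential $\widetilde{\mathcal{Z}}_t$ is that of $q\sigma\int_0^\cdot\psi_s^\epsilon\,dW_s$.

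\textbf{Drift reduction.} By the second part of \ref{C2:H_poly'}, $b'(y) \leq B_1' - B_2'y^{2(\alpha-1)}$ for $y > 0$. Inserted above, the coefficient of $\int_0^t(X_s^x(\lambda))^{2(\alpha-1)}ds$ in the exponent becomes $q\sigma^2\bigl((2\alpha-1)\lambda + \alpha^2(q-1)/2 - B_2'/\sigma^2\bigr) \leq 0$ by the standing hypothesis on $q$, yielding the pathwise bound $(J_t^x(\lambda))^q \leq e^{qB_1'T}\,\mathcal{Z}_t^{(q\alpha)}$. For $(J_t^{x,\epsilon}(\lambda))^q$ one uses in addition (i) $\xi_s^\epsilon = \int_0^1 b'((1-\theta)X_s^x(\lambda) + \theta X_s^{x+\epsilon}(\lambda))\,d\theta \leq B_1' - B_2'\phi_s^\epsilon$, and (ii) Jensen's inequality on $([0,1],d\theta)$ which yields $(\psi_s^\epsilon)^2 \leq \phi_s^\epsilon$ and thus absorbs the $(q-1)$-correction whenever $q\geq 1$; for $0 < q < 1$ this correction is non-positive and simply dropped (alternatively, one first treats $q=1$ and concludes by concavity of $y\mapsto y^q$, i.e. $\EE[J^q]\leq (\EE J)^q$). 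In both cases the pathwise bound becomes $(J_t^{x,\epsilon}(\lambda))^q \leq e^{qB_1'T}\widetilde{\mathcal{Z}}_t$.

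\textbf{Expectations, the $L^q$ limit, and main obstacle.} Both $\mathcal{Z}^{(q\alpha)}$ and $\widetilde{\mathcal{Z}}$ are non-negative local martingales, hence supermartingales with expectation bounded by $1$; taking expectations in the pathwise bounds and summing yields \eqref{eq:Jx alpha2 and eq:Jxepsilon alpha}. Rigour is ensured by localising at $\tau_M := \inf\{t : X_t^x(\lambda) + X_t^{x+\epsilon}(\lambda) > M\}$, applying the bound on $[0, t\wedge\tau_M]$ and sending $M\to\infty$ via Fatou's lemma, the positive- and negative-moment estimates of Corollary~\ref{cor:flux} (available thanks to \eqref{eq:Lambda2}) guaranteeing that $\tau_M\to\infty$ almost surely. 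The continuity \eqref{eq:Lalphaconvergence q} is then immediate from the identity $X_t^{x+\epsilon}(\lambda) - X_t^x(\lambda) = \epsilon J_t^{x,\epsilon}(\lambda) \geq 0$:
\begin{align*}
\EE\bigl[|X_t^{x+\epsilon}(\lambda) - X_t^x(\lambda)|^q\bigr] = \epsilon^q\,\EE\bigl[(J_t^{x,\epsilon}(\lambda))^q\bigr] \leq 2\epsilon^q e^{qB_1'T},
\end{align*}
which vanishes as $\epsilon\to 0$. The main delicate point is the sign analysis of $q - 1$ in the $J^{x,\epsilon}$ exponent, where Jensen's inequality $(\psi_s^\epsilon)^2\leq\phi_s^\epsilon$ goes in the right direction only when $q\geq 1$; the rest is a standard localisation argument resting on Corollary~\ref{cor:flux}.
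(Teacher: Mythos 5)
Your proof is correct and follows essentially the same route as the paper's: the Dol\'eans--Dade exponential representation of $J^x(\lambda)$ and $J^{x,\epsilon}(\lambda)$, the bound $\xi^\epsilon_s \le B_1'-B_2'\phi_s^\epsilon$ from \ref{C2:H_poly'}, the inequality $(\psi_s^\epsilon)^2\le\phi_s^\epsilon$ combined with the condition on $q$ to make the drift exponent nonpositive, the supermartingale bound on the remaining stochastic exponential, and finally $X_t^{x+\epsilon}(\lambda)-X_t^x(\lambda)=\epsilon J_t^{x,\epsilon}(\lambda)\ge 0$ for \eqref{eq:Lalphaconvergence q}. The only place you are more explicit than the paper is the sign discussion for $0<q<1$ (a case the paper's own argument treats in exactly the same implicit way), so there is no substantive difference.
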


\begin{lem}\label{lem:ConvergenceJ}
Assume \ref{C2:H1},  \ref{C2:H_loclip'}, \ref{C2:H_poly'}  and
\begin{equation}\label{eq:Lambda3}
\max\{\tfrac12,~2(\alpha-1),\overline{\gamma}_{(1)}\}\leq\tfrac12 + \tfrac{B^\lambda_2}{\sigma^2},~~~\mbox{and }~~\;(2\alpha-1)\lambda+\tfrac{\alpha^2}{2} 5 \leq \tfrac{B_2'}{\sigma^2},\tag{\bf $\Lambda$.3}
\end{equation}
for the constants  $B_2,B_2',\sigma,\alpha,\overline{\gamma}_{(1)}$ as  in \ref{C2:H_loclip'} and \ref{C2:H_poly'}. Then, for all $t\in[0,T]$, \[\lim_{\epsilon\rightarrow0}\EE\big[\left|J_t^x(\lambda)-J_t^{x,\epsilon}(\lambda)\right|^2\big]=0.\]
\end{lem}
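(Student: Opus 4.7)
Set $D_t^\epsilon := J_t^x(\lambda)-J_t^{x,\epsilon}(\lambda)$. By subtracting \eqref{eq:derivativeSDE} from \eqref{eq:Jxe SDE}, $D^\epsilon$ solves a linear SDE of the form
\begin{equation*}
D_t^\epsilon \;=\; \int_0^t D_s^\epsilon\,\mu_s^{x,\lambda}\,ds \;+\; \int_0^t D_s^\epsilon\,\sigma_s^{x,\lambda}\,dW_s \;+\; R_t^\epsilon,
\end{equation*}
with $\mu_s^{x,\lambda}=b'(X_s^x(\lambda))+\lambda\sigma^2(2\alpha-1)(X_s^x(\lambda))^{2(\alpha-1)}$, $\sigma_s^{x,\lambda}=\alpha\sigma (X_s^x(\lambda))^{\alpha-1}$, and $R_t^\epsilon$ consisting of three terms of the shape $\int_0^t J_s^{x,\epsilon}(\lambda)\bigl[\xi_s^\epsilon-b'(X_s^x(\lambda))\bigr]ds$, its Brownian analogue in $\psi_s^\epsilon-\alpha(X_s^x(\lambda))^{\alpha-1}$, and its drift analogue in $\phi_s^\epsilon-(2\alpha-1)(X_s^x(\lambda))^{2(\alpha-1)}$. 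The aim is to show $\sup_{t\in[0,T]}\EE[(D_t^\epsilon)^2]\to 0$.

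\paragraph{Step 1 (pointwise control of the coefficient residues).}
Since $X_s^x(\lambda)+\theta\epsilon J_s^{x,\epsilon}(\lambda)=(1-\theta)X_s^x(\lambda)+\theta X_s^{x+\epsilon}(\lambda)$ is a convex combination of two positive quantities, the locally Lipschitz bound \eqref{eq:defLocallyLips} applied to $b'$ (which is $(\overline{\gamma}_{(1)},\underline{\gamma}_{(1)})$-locally Lipschitz by \ref{C2:H_loclip'}) yields
\begin{equation*}
|\xi_s^\epsilon-b'(X_s^x(\lambda))|\leq C\,\Phi_s^{\epsilon}\,|X_s^{x+\epsilon}(\lambda)-X_s^x(\lambda)|,
\end{equation*}
where $\Phi_s^{\epsilon}:=1+(X_s^x(\lambda))^{\overline{\gamma}_{(1)}}+(X_s^{x+\epsilon}(\lambda))^{\overline{\gamma}_{(1)}}+(X_s^x(\lambda))^{-\underline{\gamma}_{(1)}}+(X_s^{x+\epsilon}(\lambda))^{-\underline{\gamma}_{(1)}}$. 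Analogous estimates follow for $\psi_s^\epsilon-\alpha(X_s^x(\lambda))^{\alpha-1}$ and $\phi_s^\epsilon-(2\alpha-1)(X_s^x(\lambda))^{2(\alpha-1)}$ by the mean-value theorem applied to $y\mapsto y^{\alpha-1}$ and $y\mapsto y^{2\alpha-1}$, with the corresponding positive exponents $\alpha-2$ and $2(\alpha-1)$ replacing $\overline{\gamma}_{(1)}$ (and the negative parts vanishing when these exponents are non-negative; otherwise they reduce to powers controlled by Lemma \ref{lem:Negative}).

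\paragraph{Step 2 (variance estimate via It\^o and Gronwall).}
Applying It\^o's formula to $(D_t^\epsilon)^2$ along a standard localisation $\tau_M\uparrow\infty$, taking expectations and using Young's inequality to separate $D_s^\epsilon$ from the remainder $R_s^\epsilon$ gives, for each $t\in[0,T]$,
\begin{equation*}
\EE\bigl[(D_{t\wedge\tau_M}^\epsilon)^2\bigr]\;\leq\; C\int_0^{t}\EE\bigl[(D_{s\wedge\tau_M}^\epsilon)^2\,\Lambda_s\bigr]\,ds \;+\; C\,\mathcal E_\epsilon(t),
\end{equation*}
with $\Lambda_s := 1+|b'(X_s^x(\lambda))|+(X_s^x(\lambda))^{2(\alpha-1)}$ and
\begin{equation*}
\mathcal E_\epsilon(t)=\int_0^t \EE\Bigl[(J_s^{x,\epsilon}(\lambda))^2 (\Phi_s^\epsilon)^2\,|X_s^{x+\epsilon}(\lambda)-X_s^x(\lambda)|^2\Bigr]\,ds.
\end{equation*}
By successive H\"older inequalities (using exponents whose product inverses sum to $1$), the finiteness of the positive and negative moments of $X^x(\lambda)$, $X^{x+\epsilon}(\lambda)$ and $J^{x,\epsilon}(\lambda)$ guaranteed by Corollary \ref{cor:flux} and Lemma \ref{lem:Jxepsilon_Jx_bound} under \ref{eq:Lambda3} (the constraint $(2\alpha-1)\lambda+\tfrac{\alpha^2}{2}\cdot 5\leq B_2'/\sigma^2$ buying moments up to order $5$ for $J^{x,\epsilon}(\lambda)$), combined with the $L^q$-continuity \eqref{eq:Lalphaconvergence q}, yield $\mathcal E_\epsilon(t)\to 0$ as $\epsilon\to 0$, uniformly on $[0,T]$.

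\paragraph{Step 3 (stochastic Gronwall with an exponential weight).}
The remaining difficulty is that $\Lambda_s$ is unbounded. To absorb it, one represents $D^\epsilon$ by the variation-of-constants formula for the linear SDE in $D^\epsilon$: denoting by $\mathcal Z_t$ the solution of $d\mathcal Z_t=\mathcal Z_t(\mu_t^{x,\lambda}dt+\sigma_t^{x,\lambda}dW_t)$ with $\mathcal Z_0=1$, one has $D_t^\epsilon=\mathcal Z_t\int_0^t \mathcal Z_s^{-1}\,dR_s^\epsilon+\text{It\^o correction}$. Then $\EE[(D_t^\epsilon)^2]$ is bounded via Cauchy--Schwarz by $\EE[\mathcal Z_t^2 \mathcal Z_s^{-2}]^{1/2}$ times $L^2$-norms of the integrand of $R_s^\epsilon$. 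The condition \ref{eq:Lambda3} is precisely engineered so that the stochastic exponentials $\mathcal Z_t^{\pm q}$ have uniformly bounded expectations on $[0,T]$: the extra polynomial drift $\int_0^t (X_s^x(\lambda))^{2(\alpha-1)}ds$ produced when exponentiating is controlled by the exponential moment estimate of Lemma \ref{lem:Exponentials} applied to the effective coefficient $B_2^\lambda=B_2-\lambda\sigma^2$.

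\paragraph{Main obstacle.}
The delicate part is Step 3: the coefficients of the linear SDE for $D^\epsilon$ are neither bounded nor in $L^\infty([0,T];L^p)$, so a naive Gronwall is not available. Everything hinges on the uniform-in-$\epsilon$ control of moments and exponential moments carried over from Corollary \ref{cor:flux} and Lemma \ref{lem:Exponentials}, whose validity precisely requires \ref{eq:Lambda3} to hold with room enough (the exponent $5$ in $\frac{\alpha^2}{2}\cdot 5$, in particular, is what allows a combination of Cauchy--Schwarz and Young sufficient to separate $J^{x,\epsilon}$, $\Phi^\epsilon$ and $|X^{x+\epsilon}-X^x|$ in $\mathcal E_\epsilon$).
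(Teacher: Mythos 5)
Your Steps 1--2 follow the paper's own route: the same decomposition of $D^\epsilon=\mathcal{E}^{x,\epsilon}$ into a linear part driven by the coefficients evaluated at $X^x(\lambda)$ plus a residue multiplying $J^{x,\epsilon}(\lambda)$, the same It\^o-plus-localisation computation for $\EE[(D^\epsilon_{t\wedge\tau_M})^2]$, and the same use of Lemma~\ref{lem:Jxepsilon_Jx_bound} and the $L^q$-continuity \eqref{eq:Lalphaconvergence q} to show that the residue contributes $O(\epsilon)$. (Minor point: the second part of \eqref{eq:Lambda3} with the factor $5=q-1$ buys moments of $J^{x,\epsilon}(\lambda)$ up to order $6$, not $5$; the proof needs up to $\EE[(J^{x,\epsilon}_t)^6]$ after Cauchy--Schwarz.) The genuine gap is in your Step 3. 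You correctly identify that the coefficient $\Lambda_s$ multiplying $(D_s^\epsilon)^2$ is unbounded, but the resolution you propose --- variation of constants with the stochastic exponential $\mathcal{Z}$, then Cauchy--Schwarz against $\EE[\mathcal{Z}_t^2\mathcal{Z}_s^{-2}]$ --- is not available under the stated hypotheses. Bounding $\EE[\mathcal{Z}_s^{-2}]$ (equivalently, negative moments of $J^x(\lambda)$) requires an upper bound on $-b'(X_s)$, and \ref{C2:H_poly'} only bounds $b'$ from above; the lower bound coming from \eqref{eq:poly_bi} produces a term $\exp\{C\int_0^t (X_s^x(\lambda))^{\overline{\gamma}_{(1)}+1}ds\}$ whose exponential moment is not controlled by Lemma~\ref{lem:Exponentials} (that lemma only handles the power $2(\alpha-1)$, and only for $\mu$ in a range tied to $B_2^\lambda$ that \eqref{eq:Lambda3} does not guarantee). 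So Step 3, as written, does not close.

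The point you are missing is that no exponential-moment machinery is needed: after It\^o's formula, one should group \emph{all} the terms multiplying $(\mathcal{E}_s^{x,\epsilon})^2$, namely
\begin{equation*}
\big(\mathcal{E}_{s}^{x,\epsilon}\big)^2\Big[\,2\lambda\sigma^2(2\alpha-1)\,(X_s^x(\lambda))^{2(\alpha-1)}+2\,b'(X_s^x(\lambda))+2\alpha^2\sigma^2\,(X_s^x(\lambda))^{2(\alpha-1)}\Big],
\end{equation*}
and observe that \ref{C2:H_poly'} gives $b'(x)\leq B_1'-B_2'x^{2(\alpha-1)}$, so the whole bracket is bounded above by $2B_1'$ as soon as $(2\alpha-1)\lambda+\alpha^2\leq B_2'/\sigma^2$ --- which is exactly what the second condition in \eqref{eq:Lambda3} guarantees (with room to spare). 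The unbounded positive contributions are absorbed by the negative superlinear part of $b'$, the Gronwall inequality then applies with a \emph{constant} rate, and one concludes $\EE[(\mathcal{E}^{x,\epsilon}_{t\wedge\tau_M})^2]\leq C\epsilon$ before letting $M\to\infty$ and $\epsilon\to 0$. This one-sided (monotonicity) use of \ref{C2:H_poly'} is the whole point of that hypothesis here, and it replaces your entire Step 3.
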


\subsubsection{Proof of Proposition \ref{prop:interchang}}
To simplify notation we omit the dependence on $\lambda$ in the processes and
 prove the result for $h\equiv0$. In order to prove the interchange between expectation and operator $\frac{\partial}{\partial x}$ we must show the equality
\begin{equation}\label{eq:limitInterchange}
\lim_{\epsilon\rightarrow0}\tfrac1\epsilon\EE\big[\Phi(X_t^{x+\epsilon})\exp\{Y_t^{x+\epsilon}\}-\Phi(X_t^x)\exp\{Y_t^{x}\}\big]=\EE\big[(\Phi'(X_t^x)J_t^x+\Phi(X_t^x)\tfrac{d Y_t^x}{dx})\exp\{Y_t^{x}\}\big],
\end{equation}
introducing the process $(Y_t^x:=\int_0^t g(X_s^x)ds;\; 0\leq t\leq T)$,
with derivative $\frac{d Y_t^x}{dx}=\int_0^t g'(X_s^x) J_s^x ds$.
Following the decomposition \eqref{eq:decompo}, we rewrite the difference
\begin{align}\label{eq:ABepsilon}
&\EE\big[\big|\frac{1}{\epsilon}( \Phi(X_t^{x+\epsilon})\exp\big\{Y_t^{x+\epsilon}\big\}-\Phi(X_t^x)\exp\big\{Y_t^{x}\big\}) -\left(\Phi'(X_t^x)J_t^x+\Phi(X_t^x)\tfrac{d Y_t^x}{dx}\right)\exp\big\{Y_t^{x}\big\}\big|\big] \nonumber \\
&\leq \EE\big[\big|(J_t^{x,\epsilon}\exp\big\{Y_t^{x+\epsilon}\big\}-J_t^x\exp\big\{Y_t^{x}\big\})\mbox{$\int_0^1$} \Phi'\left(X_t^x+\theta\epsilon J_t^{x,\epsilon}\right)d\theta\big|\big] \nonumber \\
&\quad\quad+\EE\big[\big|J_t^x\exp\big\{Y_t^{x}\big\}\mbox{$\int_0^1$}\left(\Phi'\left(X_t^x+\theta\epsilon J_t^{x,\epsilon}\right)-\Phi'(X_t^x)\right)d\theta\big|\big] \nonumber \\
&\quad\quad+\EE\big[\big|\Phi(X_t^x)\left(\tfrac{Y_t^{x+\epsilon}-Y_t^x}\epsilon-\tfrac{d Y_t^x}{dx}\right)\mbox{$\int_0^1$}\exp\big\{Y_t^x+\theta(Y_t^{x+\epsilon}-Y_t^{x})\big\}d\theta\big|\big] \nonumber \\
&\quad\quad+\EE\big[\big|\Phi(X_t^x)\tfrac{dY_t^{x}}{dx} \mbox{$\int_0^1$}
\left(\exp\big\{Y_t^x+\theta(Y_t^{x+\epsilon}-Y_t^{x})\big\}-\exp\{Y_t^x\}\right)d\theta\big|\big] \nonumber \\
& \quad : =  \EE\big[ |A^\epsilon_t|\big] + \EE\big[ |B^\epsilon_t|\big] + \EE \big[|C^\epsilon_t|\big] + \EE \big[|D^\epsilon_t|\big],
\end{align}
and we analyze separately the limit, when $\epsilon$ tends to 0, of each term in the right-hand side of \eqref{eq:ABepsilon}.
Notice that $g$ being bounded from above, all the exponential terms above are bounded.

Since $|\Phi'|$ and $\EE[|J_t^x\exp\{Y_t^x\}|]$ are bounded, the second term  $\EE[|B^\epsilon_t|]$ is uniformly integrable.
Moreover, $\Phi'$ is continuous,  and, under the hypotheses \eqref{eq:Lambda1}, according to Lemma \ref{lem:Jxepsilon_Jx_bound},  $X_t^{x+\epsilon}$ converges in $L^4$ and then in probability to $X_t^x$ when $\epsilon$ tends to 0. Therefore, we can apply the Lebesgue dominated convergence Theorem, obtaining (up to a subsequence still denoted by $\epsilon$) that $\lim_{\epsilon\rightarrow0} \EE[|B^\epsilon_t|]=0$.

Similarly, since $|\Phi|$ and $\EE[|\tfrac{d Y_t^x}{dx}|]\leq \EE[\int_0^t|g'(X_s^x)J_s^x|ds]$ are  bounded according to $\EE[|(J_s^x)^2|]$ and $\EE[|(X_s^x)^{2\rho_2}|]$, the sequence $D^\epsilon_t$ is uniformly integrable. Then, by the convergence in probability of $Y_t^{x+\epsilon}$ towards $Y_t^x$ obtained from the following bound
\begin{align*}
\EE\big[|Y_t^{x+\epsilon}-Y_t^x|\big] &= \mbox{$\epsilon\,\EE\big[\big|\int_0^t J_s^{x,\epsilon}\int_0^1g'(X_s^x+\epsilon\theta J_s^{x,\epsilon})d\theta ds\big|\big]$}\\
&\leq\mbox{$ C\epsilon\int_0^t\EE^{1/2}\big[|J_s^{x,\epsilon}|^2\big]~\EE^{1/2}\big[1+|X_s^{x+\epsilon}|^{2\rho_2}+|X_s^{x}|^{-2\rho_3}\big]ds$},
\end{align*}
 we obtain (again up to a subsequence) that $\lim_{\epsilon\rightarrow0} \EE[|D^\epsilon_t|]=0$.

For $\EE [|A^\epsilon_t|]$,  we use Cauchy-Schwartz inequality, from which we get
\begin{align*}
\EE[| A^\epsilon_t|]
&\leq\EE\big[\left|J_t^{x,\epsilon}-J_t^x\right|
\exp\big\{Y_t^{x+\epsilon}\big\}\mbox{$\int_0^1$}\left|\Phi'\left(X_t^x+\theta\epsilon J_t^{x,\epsilon}\right)\right|
d\theta\big]\\
&\qquad+\EE\brac{J_t^x\left|\exp\big\{Y_t^{x+\epsilon}\big\}-\exp\big\{Y_t^{x}\big\}\right|
\mbox{$\int_0^1$} \left|\Phi'\left(X_t^x+\theta\epsilon J_t^{x,\epsilon}\right)\right| d\theta}\\
&\leq C\left(\EE\big[\mbox{$\int_0^1$} \left|\Phi'\left(X_t^x+\theta\epsilon J_t^{x,\epsilon}\right)\right|^2 d\theta\big]\right)^{1/2}\left(\EE\big[\left|J_t^{x,\epsilon}-J_t^x\right|^2\big]\right)^{1/2}\\
&\qquad +\left(\EE\big[\mbox{$\int_0^1$} \left|\Phi'\left(X_t^x+\theta\epsilon J_t^{x,\epsilon}\right)\right|^2 d\theta\big]\right)^{1/2}\left(\EE\brac{\left(J_t^x\exp\big\{ Y_t^{x+\epsilon}\big\}-J_t^x\exp\big\{Y_t^{x}\big\}\right)^2}\right)^{1/2}\\
&\leq C\EE^{1/2}\big[\left|J_t^{x,\epsilon}-J_t^x\right|^2\big]
+C\EE^{1/2}\big[\left(J_t^x\exp\big\{ Y_t^{x+\epsilon}\big\}-J_t^x\exp\big\{Y_t^{x}\big\}\right)^2\big].
\end{align*}
Therefore, by Lemma \ref{lem:ConvergenceJ} and Lebesgue's Theorem,  $\EE [|A^\epsilon_t|]$ converges to 0 when $\epsilon$ tends to~0.

Finally, for $\EE[|C^\epsilon_t|]$,
\begin{align*}
&\EE[|C^\epsilon_t|]  \leq C \EE\big[\big|\tfrac{Y_t^{x+\epsilon}-Y_t^x}\epsilon-\tfrac{d Y_t^x}{dx}\big|\big]\leq C\;\mbox{$\int_0^t$}\EE\big[\left|\tfrac{g(X_s^{x+\epsilon})-g(X_s^x)}\epsilon-g'(X_s^x)J_s^x\right|\big]ds\\
&\leq C\mbox{$\int_0^t$}\left\{\EE\big[\big|(J_s^{x,\epsilon}-J_s^x)\mbox{$\int_0^1$} g'(X_s^x+\epsilon\theta J_s^{x,\epsilon})d\theta\big|\big]+\EE\big[\big|J_s^x\mbox{$\int_0^1$}\left(g'(X_s^x+\epsilon\theta J_s^{x,\epsilon})-g'(X_s^x)\right)d\theta\big|\big]\right\}ds,
\end{align*}
and we conclude that $\lim_{\epsilon\rightarrow0} \EE[|C^\epsilon_t|]=0$ with similar arguments to those used for $A^\epsilon_t$.

Coming back to \eqref{eq:ABepsilon}, we obtain the convergence to zero, up to a subsequence $\{\epsilon_k\}_{k\geq1}$, of its right-hand side. Therefore, by uniqueness of the limit, we deduce the convergence in \eqref{eq:limitInterchange}.

The condition \eqref{eq:Lambda1} makes the intersection of Conditions \eqref{eq:Lambda2} and \eqref{eq:Lambda3} with the highest $p$-moment order needed to obtain the convergence in \eqref{eq:limitInterchange}. When $h$ is not reduce to zero,  it is sufficient for the proof  to control  
$$\sup_{0\leq s,t\leq T}\EE[|g'(X_t^x)h(X_s^x)|^2 + |h'(X_t^x)|^2],$$
 by the moments  $\sup_{0\leq t\leq T}\EE[|X_t^x|^{2(\rho_2+\rho_4)} + |X_t^x|^{2\rho_0}]$ which are finite when
$\max\{\rho_0, \rho_2 + \rho_4\}\leq\frac12 + \frac{B^\lambda_2}{\sigma^2}$.

\subsection{Change of measure}\label{sec:changemeasure}
Considering a generic expression coming from the application of Proposition \ref{prop:interchang},  typically of the form
\begin{align*}
\mbox{$\frac{\partial v}{\partial x} (t,x)=\EE\big[\exp\{\int_0^t g(X_s^x(\lambda)) ds\} \ 6\Phi'(X_t^x(\lambda))J_t^x(\lambda)\big],$}
\end{align*}
we introduce the change of probability measure that allows to remove the term $J_t^x(\lambda)$ in the expression above.

Let us consider the process $(B_t^{\lambda+\alpha};\;0\leq t\leq T)$ defined as  $B_t^{\lambda+\alpha}=W_t-\alpha \sigma\int_0^t(X_s^x(\lambda))^{\alpha-1}ds$.
Then, using Lemma \ref{lemJ} below and Girsanov's Theorem,  we can construct the probability measure $\mathbb{Q}^{\lambda+\alpha}$ under which $(B_t^{\lambda+\alpha};\;0\leq t\leq T)$ is a standard Brownian motion, by introducing the Radon-Nikodyn density
\begin{equation}\label{eq:densityQ}
\tfrac{d\QQ^{\lambda+\alpha}}{d\PP}\Big|_{\Ff_t}=\tfrac1{\mathcal{Z}_t^{(\lambda,\lambda+\alpha)}},\quad
\mbox{$\mathcal{Z}_t^{(\lambda,\lambda+\alpha)}:=\exp\{-\tfrac{\alpha^2\sigma^2}{2}\int_0^t{\left(X_s^x(\lambda)\right)^{2(\alpha-1)}}ds-\alpha \sigma\int_0^t{X_s^x(\lambda)}^{\alpha-1}dB^{\lambda+\alpha}_s\}.$}
\end{equation}
Lemma \ref{lemJ} below gives a sufficient condition for the process $\mathcal{Z}^{(\lambda,\lambda+\alpha)}$ to be martingale for a given $\lambda\geq0$.
From the explicit form of the process $J_t^x(\lambda)$ in \eqref{eq:expoJlambda}, we recognize
\begin{align*}
J_t^x(\lambda)= &\mbox{$\exp\{\int_0^t\big[b'(X_s^x(\lambda))+\lambda \sigma^2(2\alpha-1)X_s^x(\lambda)^{2(\alpha-1)}\big] ds\}$}\\
&\qquad\times \mbox{$\exp\{\int_0^t\tfrac{\alpha^2 \sigma^2}2 X_s^x(\lambda)^{2(\alpha-1)}ds +  \alpha \sigma\int_0^t X_s^x(\lambda)^{\alpha-1}dB_s^{\lambda + \alpha}\}.$}
\end{align*}
Hence, by \ref{C2:H_poly'} and \eqref{eq:Lambda1},
\begin{align*}
\mbox{$ J_{t}^x(\lambda)\,\mathcal{Z}_{t}^{(\lambda,\lambda+\alpha)} = \exp \{ \int_0^{t}\big[b'(X_s^x(\lambda)) +\lambda\sigma^2(2\alpha -1)X_s^x(\lambda)^{2(\alpha-1)}\big] ds\} \leq \exp\big\{B_1'~T\big\}.$}
\end{align*}
Moreover, we can easily check the identity  $
\textit{Law}^{\QQ^{\lambda+\alpha}}\left(X^x\left(\lambda\right)\right)=\textit{Law}^{\PP}\left(X^x\left(\lambda+\alpha\right)\right)$,
so that  $\tfrac{\partial v}{\partial x}$ can be rewrite
\begin{align*}
&\tfrac{\partial v}{\partial x} (t,x)=\\
&\quad\mbox{$\EE\big[\exp\{\int_0^t \big[g(X_s^x(\lambda+\alpha))  + b'(X_s^x(\lambda +\alpha)) +\lambda\sigma^2(2\alpha -1)X_s^x(\lambda+\alpha)^{2(\alpha-1)}\big]  ds\}\Phi'(X_t^x(\lambda+\alpha))\big]$}.
\end{align*}
The following lemma is a direct consequence of the Novikov's criterion whose fulfillment is ensured by applying Corollary~\ref{cor:flux}.
\begin{lem}\label{lemJ}
Assume \ref{C2:H1}, \ref{C2:H_loclip} and \ref{C2:H_poly'}. Assume in addition that $B_2,\alpha$ and $\sigma$ in \ref{C2:H_poly'} satisfy
\begin{align}\label{hypo:constraint}
\begin{aligned}
\mbox{ if } b(0)=0,&~~ \quad  \alpha \leq \tfrac{1}{2} + \tfrac{B_2^\lambda}{\sigma^2}, \\
\mbox{ if } b(0)>0,&~~\quad  \tfrac32< \alpha, \quad\mbox{ and }\quad  {\alpha^2} \leq 2 B_2^\lambda.
\end{aligned}
\end{align}
Then  the process $(M_t^x(\lambda);\;0\leq t\leq T)$ defined by
\begin{equation*}
\mbox{$M_t^x(\lambda)=\exp\{\alpha \sigma\int_0^t(X_s^x(\lambda))^{\alpha-1}dW_s-\tfrac{\alpha^2 \sigma^2}2\int_0^t(X_s^x(\lambda))^{2(\alpha-1)}ds\}$}
\end{equation*}
is a $\mathbb{P}$- martingale.
\end{lem}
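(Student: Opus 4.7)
The approach is a direct application of Novikov's criterion. Writing $M^x(\lambda)$ as the Dol\'eans--Dade exponential $\mathcal{E}\bigl(\alpha\sigma\int_0^\cdot(X_s^x(\lambda))^{\alpha-1}\,dW_s\bigr)$, it is automatically a continuous nonnegative local martingale, hence a supermartingale. To upgrade it to a true $\mathbb{P}$-martingale on $[0,T]$, by Novikov it suffices to verify
$$
\mathbb{E}\!\left[\exp\!\left\{\tfrac{\alpha^2\sigma^2}{2}\int_0^T(X_s^x(\lambda))^{2(\alpha-1)}\,ds\right\}\right]<+\infty.
$$
Thus the entire proof reduces to an exponential moment estimate on the integrated functional $\int_0^T (X_s^x(\lambda))^{2(\alpha-1)}\,ds$ of the drift-perturbed flow, and this is exactly the quantity controlled by Corollary~\ref{cor:flux} under \ref{C2:H1}, \ref{C2:H_loclip} and \ref{C2:H_poly'}.

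Applying Corollary~\ref{cor:flux} with $\mu=\tfrac{\alpha^2\sigma^2}{2}$, the remaining task is a bookkeeping check that the admissible range for $\mu$ there matches the constraints \eqref{hypo:constraint}. When $b(0)=0$, the admissible range is $\mu\leq(\sigma^2+2B_2^\lambda)^2/(8\sigma^2)$; substituting $\mu=\alpha^2\sigma^2/2$ and taking square roots (permitted since $\sigma^2+2B_2^\lambda>0$) rewrites this as $2\alpha\sigma^2\leq\sigma^2+2B_2^\lambda$, i.e.\ $\alpha\leq\tfrac12+\tfrac{B_2^\lambda}{\sigma^2}$, which is exactly the first line of \eqref{hypo:constraint}. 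When $b(0)>0$, the corollary demands $\alpha>\tfrac32$ (imposed directly in \eqref{hypo:constraint}) together with $\mu<B_2^\lambda\sigma^2$, equivalent to $\alpha^2<2B_2^\lambda$, which matches the strict version of the second line of \eqref{hypo:constraint}.

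The only subtle point, and hence the main (essentially only) obstacle, is the boundary equality $\alpha^2=2B_2^\lambda$ in the case $b(0)>0$, where Corollary~\ref{cor:flux} supplies only the strict inequality. I would handle it by exploiting the uniform-in-$\mu$ nature of the upper bound noted at the end of Lemma~\ref{lem:Exponentials}: applying the corollary along a sequence $\mu_n\uparrow\alpha^2\sigma^2/2$ yields a family of uniformly bounded exponential moments, so that monotone convergence delivers finiteness of the exponential moment at the boundary as well. Once this exponential moment is shown to be finite, Novikov's criterion immediately implies that $M^x(\lambda)$ is a $\mathbb{P}$-martingale on $[0,T]$, completing the proof.
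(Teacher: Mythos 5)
Your proof is correct and follows exactly the paper's route: the paper's entire argument is the one-line observation that the lemma is a direct consequence of Novikov's criterion, whose hypothesis is verified by applying Corollary~\ref{cor:flux} with $\mu=\tfrac{\alpha^2\sigma^2}{2}$ — precisely your bookkeeping. Your extra care with the boundary case $\alpha^2=2B_2^\lambda$ when $b(0)>0$ (monotone convergence along $\mu_n\uparrow\alpha^2\sigma^2/2$ using the uniform-in-$\mu$ bound) actually goes beyond the paper, which silently passes over the mismatch between its non-strict hypothesis and the strict inequality $\mu<B_2^\lambda\sigma^2$ required in the corollary.
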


\subsection{Proof of Proposition \ref{prop:KolPDE}} 	

Let us first note that the hypotheses considered in Proposition \ref{prop:KolPDE}, in particular \ref{C2:H5}, allow to apply Proposition \ref{prop:interchang} up to  $\lambda=3\alpha$.

\paragraph{Estimates on $u$ and $\tfrac{\partial u}{\partial t}$. }
The uniform boundedness of $u(t,x)=\EE[f(X^x_{T-t}(0))]$
is an immediate consequence of the bounded of $f$.  Applying It\^o's formula and since $X^x(0)$ has finite $2\alpha$-th moment,
\begin{align*}
u(t,x)&=f(x)+ \,\int_0^{T-t}\EE[ b(\qs(0)) \, f'(\qs(0))]\, ds \\
& \quad+  \sigma \,\EE[\int_0^{T-t}  (\qs(0))^{\alpha}\, f'(\qs(0))dW_s]+\frac{\sigma^2}{2} \int_0^{T-t}\EE[(\qs(0))^{2\alpha}f''(\qs(0))]ds\\
&=f(x)+\int_0^{T-t}\EE[ b(\qs(0)) \, f'(\qs(0))]\, ds +\frac{\sigma^2}{2} \int_0^{T-t}\EE[(\qs(0))^{2\alpha}f''(\qs(0))]ds.
\end{align*}
From this expression, we can deduce that $\tfrac{\partial u}{\partial t}$ is continuous in $[0,T]\times \RR^+$ with
\begin{align*}
\left| \tfrac{\partial u}{\partial t}\right|(t,x)
&\leq  C\;\left(\EE[|b\left(X_{T-t}^x(0)\right)|]+\EE[|X_{T-t}^x(0)|^{2\alpha}]\right),
\end{align*}
where $C$ is a positive constant depending on $\alpha, \sigma, b,\|f^{(i)}\|_\infty$ for $i=0,1,2$. The $2(\alpha-1)$-locally Lipschitz continuity of the drift $b$ gives us
\[\EE[|b(X_{T-t}^x(0))|]\leq C(1+\EE[|X_{T-t}^x(0)|^{2\alpha-1}]).\]
Applying Corollary \ref{cor:flux} for the control of $\sup_{t \in[0,T]}\EE|X_t^x|^{2\alpha}]$ granted by the condition $\alpha \leq \frac{1}{2} +\frac{B_2}{\sigma^2}$ in \ref{C2:H5}, we obtain
\begin{equation*}
\mbox{$ \sup_{t\in [0,T]}\left|\tfrac{\partial u}{\partial t}\right| (t,x)  \leq C\, \left(1+x^{2\alpha}\right).$}
\end{equation*}
\paragraph{Estimates on $\tfrac{\partial u}{\partial x}$ and $\tfrac{\partial^2 u}{\partial x^2}$. } The differentiability up to order $4$ of $x\mapsto u(t,x)$ will rely on the iterative use of Proposition \ref{prop:interchang} and on rewriting (whenever its is necessary) the function $\frac{\partial^{j} u}{\partial x^{j}}(t,x)$, $j=0,1,2,3$, as
\begin{align}\label{eq:DerivativeRewritten}
 \tfrac{\partial^{j} u}{\partial x^{j}}(t,x) &= \mbox{$\EE\big[f_j(X_{T-t}^x(j\alpha))\exp\big\{\int_0^{T-t}g_j(X_s^x(j\alpha))ds\big\}\big]$}\nonumber\\
&\quad+\mbox{$\int_0^{T-t}\EE\big[h_j(s,X_{s}^x(j\alpha))\exp\big\{\int_0^{s}g_j(X_r^x(j\alpha))dr\big\}\big]ds$},
\end{align}
for some continuous differentiable functions $g_j$ and $h_j$ with locally bounded spatial derivatives in $[0,+\infty)$, with $g_j$ bounded from above, and  $f_j$ some bounded continuously differentiable functions with bounded derivative.

In order to prove the identity \eqref{eq:du/dx2} for $\tfrac{\partial u}{\partial x}$, we apply Proposition \ref{prop:interchang} for $f_0=f$ and $g_0=h_0\equiv0$ and a first change of measure.  So we need  Condition \eqref{eq:Lambda1} to be satisfied for $\rho_i=0$ and $\lambda =0$ and we need also the hypotheses of Lemma \ref{lemJ} satisfied for  $\lambda =0$.  From \eqref{eq:du/dx2}, we immediately get that
\begin{align}\label{eq:upper_first}
\left|\tfrac{\partial u}{\partial x}\right|(t,x) = \left|\EE[f'(X_{T-t}^x(0))J_{T-t}^x(0)]\right|\leq C \|f'\|_\infty.
\end{align}

Next from \eqref{eq:du/dx2},  we identify the form \eqref{eq:DerivativeRewritten} with $f_1=f'$, $g_1=b'(x)$ (bounded from above, and with $|g'_1(x)|\leq C(1 + |x|^{\overline{\gamma}_{(2)} +1} +|x|^{-\underline{\gamma}_{(2)}})$ and $h(x)\equiv0$. Applying Proposition \ref{prop:interchang} again  (with \eqref{eq:Lambda1} to be satisfied for $\rho_2= {\overline{\gamma}_{(2)} +1}$ and $\lambda =\alpha$) we obtain that $\frac{\partial u}{\partial x}$ is continuously differentiable in $x$ with derivative given by
\begin{align}\label{eq:D2Before}
\frac{\partial^2 u}{\partial x^2}(t,x)&= \mbox{$ \EE\big[\exp\big\{\int_0^{T-t}b^{\prime}(X_s^x(\alpha))ds\big\}f^{(2)}(X_{T-t}^x(\alpha))J_{T-t}^x(\alpha)\big]$} \\
&\quad+ \mbox{$ \EE\big[\exp\big\{\int_0^{T-t}b^{\prime}(X_s^x(\alpha))ds\big\}f^{\prime}(X_{T-t}^x(\alpha))\int_0^{T-t}b^{(2)}(X_s^x(\alpha))J_{s}^x(\alpha)ds\big].$} \nonumber
\end{align}
Notice that by means of the Markov property and time homogeneity of the process $(X_s^x(\alpha);0\leq s \leq T-t)$ we have
\begin{align}\label{eq:MarkovProp}
\begin{aligned}
& \mbox{$ \EE\big[f^{\prime}(X_{T-t}^x(\alpha))\exp\big\{\int_s^{T-t}b^{\prime}(X_r^x(\alpha))dr\big\}\big| {\Ff_s}\big]$} \\
&= \mbox{$ \EE\big[f^{\prime}(X_{T-t-s}^y(\alpha))\exp\big\{\int_0^{T-t-s}b^{\prime}(X_r^y(\alpha))dr\big\}\big]\big|_{y=X_s^x(\alpha)}=\frac{\partial u}{\partial x}(t+s,X_s^x(\alpha)),$}
\end{aligned}
\end{align}
where the last equality is obtained from \eqref{eq:du/dx2}. Then, we get
\begin{align*}
\begin{array}{l}
\int_0^{T-t}\EE\big[\exp\big\{\int_0^{T-t}b^{\prime}(X_r^x(\alpha))dr\big\}f^{\prime}(X_{T-t}^x(\alpha))b^{(2)}(X_s^x(\alpha))J_{s}^x(\alpha)\big]ds\\
=\int_0^{T-t}\EE\big[\exp\big\{\int_0^{s}b^{\prime}(X_r^x(\alpha))dr\big\}b^{(2)}(X_s^x(\alpha))\frac{\partial u}{\partial x}(t+s,X_s^x(\alpha))J_{s}^x(\alpha)\big]ds.
\end{array}
\end{align*}
Substituting the last equality in \eqref{eq:D2Before},
\begin{align}\label{eq:D2After}
\frac{\partial^2 u}{\partial x^2}(t,x)&=\EE\big[f^{(2)}(X_{T-t}^x(\alpha))\exp\big\{\int_0^{T-t}b^{\prime}(X_s^x(\alpha))ds\big\}J_{T-t}^x(\alpha)\big]\\
&\quad+\int_0^{T-t}\EE\big[\exp\big\{\int_0^{s}b^{\prime}(X_r^x(\alpha))dr\big\}b^{(2)}(X_s^x(\alpha))\frac{\partial u}{\partial x}(t+s,X_s^x(\alpha))J_{s}^x(\alpha)\big]ds. \nonumber
\end{align}
Introducing the change of measure $\tfrac{d \mathbb{Q}^{2\alpha}}{d\mathbb{P}}\big|_{\mathcal{F}_t}:= \frac{1}{\mathcal{Z}_t^{(\alpha,2\alpha )}}$ where  $\mathcal{Z}^{(\alpha,2\alpha)}$ is  defined in \eqref{eq:densityQ} (under the conditions \eqref{hypo:constraint}  and  \eqref{eq:Lambda1} applied for $\lambda =\alpha$), we can observe that (assuming  $2B'_2 \geq \alpha (2\alpha -1) \sigma^2$  in \eqref{eq:Lambda1})
\begin{align}\label{eq:D2After_bis}
\mbox{$
\exp\{\int_0^{t}b^{\prime}(X_s^x(\alpha))ds\}J_{t}^x(\alpha)\,\mathcal{Z}_{t}^{(\alpha,2\alpha)} = \exp\{ \int_0^{t}\big(2b'(X_s^x(\alpha)) +\alpha\sigma^2(2\alpha -1)X_s^x(\alpha)^{2(\alpha-1)}\big) ds\} \leq C.$}
\end{align}
So changing the measure in \eqref{eq:D2After}, with the observation that  $\textit{Law}^{\mathbb{Q}^{2\alpha }}(X^x(\alpha))=\textit{Law}^\mathbb{P}(X^x(2\alpha ))$, and by the boundedness of the functions $\frac{\partial u}{\partial x}$ and $f^{(2)}$  we obtain
\begin{align*}
\mbox{$
\big|\tfrac{\partial^2 u}{\partial x^2}\big|(t,x)\leq C\; \EE\big[1+\int_0^{T-t}|b^{(2)}(X_s^x(2\alpha))|ds\big]\leq C\big(1+\sup_{s\in[0,T]}\EE[|X_s^x(2\alpha)|^{\overline{\gamma}_{(2)}+1}]$}\\
\mbox{$+\sup_{s\in[0,T]}\EE[|X_s^x(2\alpha)|^{-\underline{\gamma}_{(2)}}]\big).$}
\end{align*}
We then apply Corollary \ref{cor:flux} to conclude on the estimate for the second derivative:
\begin{align}\label{eq:upper_second}
\big|\tfrac{\partial^2 u}{\partial x^2}\big|(t,x)\leq~C(1+x^{\overline{\gamma}_{(2)}+1}+x^{-\underline{\gamma}_{(2)}}),
\end{align}
under the condition that $\overline{\gamma}_{(2)}+1 \leq 1 + \frac{2B_2^{2\alpha}}{\sigma^2}$.

We end this proof by iterating the derivative estimations. This (technical) step is postponed to Appendix  \ref{sec:app:proof_FK}.

\section{Proof of  Proposition \ref{prop:Weak rate} }\label{sec:Proofs}

Introducing the notation
\begin{align*}
\overline{b}(t,x,y)=\left(x-b(0)\dt\right)\tfrac{\left(b(y)-b(0)\right)}{y} \quad \mbox{and} \quad  \overline{\sigma}(t,x,y)=\sigma\left(x-b(0)\dt\right) y^{\alpha-1},
\end{align*}
for which we have
\[\overline{b}(\eta(t),\overline{X}_{\eta(t)},\overline{X}_{\eta(t)})= b(\overline{X}_{\eta(t)}) - b(0),\qquad\overline{\sigma}(\eta(t),\overline{X}_{\eta(t)},\overline{X}_{\eta(t)})= \sigma \overline{X}_{\eta(t)}^\alpha,\]
we can rewrite the dynamics \eqref{eq:ContExpscheme} as
\begin{equation}\label{eq:ContExpschemebis}
d\overline{X}_t=  \big(b(0) +\overline{b}(t,\overline{X}_t,\overline{X}_{\eta(t)})  \big)   dt   +  \overline{\sigma}(t,\overline{X}_t,\overline{X}_{\eta(t)})dW_t,~\overline{X}_0=x.
\end{equation}
We associate to it, the differential operator
\begin{align*}
\Ld_{(t, (y,\eta(t)))} f(t,x) = \big\{\tfrac{\partial f}{\partial t}+\left(b(0)+\overline{b}\right)\tfrac{\partial f}{\partial x}+ \tfrac12\overline{\sigma}^2\tfrac{\partial^2 f}{\partial x^2}\big\} (t,x,y).
\end{align*}
Then, applying It\^{o}'s formula to the $C^{1,4}$ function $u$ along $\overline{X}$ in the time interval $[0,T]$, we obtain
\begin{align*}
\begin{aligned}
& \EE[f(X_T)-f(\overline{X}_{T})]=u(0,x)-\EE [u(T,\overline{X}_T)]
=\sum_{k=1}^N\EE[u(t_{k-1},\overline{X}_{t_{k-1}})-u(t_{k},\overline{X}_{t_{k}})] \\
& =-\sum_{k=1}^N\EE\big[\int_{t_{k-1}}^{t_k}\Ld_{(s, (\overline{X}_{\eta(s)},\eta(s)))} u (s,\overline{X}_s,\overline{X}_{\eta(s)}) ds\big] -\sum_{k=1}^N\EE\big[\int_{t_{k-1}}^{t_k}\overline{\sigma}(s, \overline{X}_{\eta(s)},\overline{X}_\eta(s)) \frac{\partial u}{\partial x}(s,\overline{X}_s)dW_s\big].
\end{aligned}
\end{align*}
Lemma \ref{leq:Schememoments} under  \ref{C2:H5} allows to control the $2p$-th moments of the exp-ES process $\overline{X}_t$ up to the order $2p :=6\alpha +2\overline{\overline{\beta}} \vee (\overline{\overline{\beta}} +2\alpha)$. By Proposition \ref{prop:KolPDE} we have for ach $k=1,\ldots,N$
\begin{equation*}
\mbox{$
\EE\big[\int_{t_{k-1}}^{t_{k}} \big(\overline{\sigma}\frac{\partial u}{\partial x}\big)^2(s,\overline{X}_s,\overline{X}_{\eta(s)}) ds\big]
\leq C \sup_{t\in[0,T]}\EE[\overline{X}_{t}^{2\alpha}]<+\infty.
$}
\end{equation*}
Moreover, since $u$ is solution to the Cauchy problem \eqref{eq:KolPDE}, we decompose the error in two contributions: 
\begin{align}\label{eq:dif}
&\mbox{ $\EE[f(X_T)-f(\overline{X}_{T})]= \sum_{k=1}^N\EE\big[\int_{t_{k-1}}^{t_k} {\tfrac{\partial u}{\partial x}(s,\overline{X}_s)\big(b(\overline{X}_s)-b(0)-\overline{b}(s,\overline{X}_s,\overline{X}_{\eta(s)})\big)} $}\nonumber\\
&\hspace{1.8in}+{\tfrac{1}{2}\tfrac{\partial^2 u}{\partial x^2}(s,\overline{X}_s)\big(\sigma^2\overline{X}_s^{2\alpha}-\overline{\sigma}^2(s,\overline{X}_s,\overline{X}_{\eta(s)})\big)}\,ds\big] \nonumber \\
&\hspace{1.2in}  =:\sum_{k=1}^N\EE\big[\int_{t_{k-1}}^{t_k} \big( I_1(s,\eta(s)) + \tfrac{1}{2} I_2(s,\eta(s))\ \big) ds\big].
\end{align}

Notice that the functions $\overline{b}$ and $\overline{\sigma}$ are continuously differentiable with respect to $x$, and piecewise continuously differentiable with respect to $t$ on each subintervals  $[t_k, t_{k+1})$. These linear functions in $x$ and $t$ produce constant values as derivatives, only parametrized by $y$: for all $t\in(0,T)$, for all $(\theta,x,y)\in(\eta(t),t)\times(0,+\infty)\times(0,+\infty)$, 
\begin{align*}
\begin{array}{ll}
\dfrac{\partial \overline{b}}{\partial \theta}(\theta,x,y) = -b(0) \dfrac{\left(b(y)-b(0)\right)}{y}, & \qquad \dfrac{\partial\overline{\sigma}}{\partial \theta } (\theta,x,y)  = - \sigma b(0) y^{\alpha-1},\\
 \dfrac{\partial \overline{b}}{\partial x}(\theta,x,y) = \dfrac{\left(b(y)-b(0)\right)}{y} , & \qquad\dfrac{\partial\overline{\sigma}}{\partial x} (\theta,x,y) =\sigma y^{\alpha-1}.
 \end{array}
\end{align*}

Then, observing that $I_i(\eta(s),\eta(s))= 0$, we apply It\^{o}'s  formula a second time in the interval $[\eta(s),s]$ and we obtain the two following decompositions for each $s\in[t_{k-1},t_k]$ with $k=1,\ldots,N$:
\begin{align*}
\begin{array}{l}
\EE[I_1]=\EE\big[\int_{\eta(s)}^s\big\{\tfrac{\partial^2u}{\partial t \partial x} \left(b-b(0)-\overline{b}\right)+  \tfrac{\partial \overline{b}}{\partial t}\tfrac{\partial u}{ \partial x}\big\}(\theta,\overline{X}_\theta,\overline{X}_{\eta(\theta)})\, d\theta\big]\\
\quad \qquad +\EE\big[\int_{\eta(s)}^s \big\{\big(b(0)+\overline{b}\big)\big(\tfrac{\partial^2u}{\partial x^2} \left(b-b(0)-\overline{b}\right) +(b' - \tfrac{\partial \overline{b}}{\partial x})\tfrac{\partial u}{\partial x} \big)\big\}(\theta,\overline{X}_\theta, \overline{X}_{\eta(\theta)})\,d\theta\big]\\
\quad \qquad +\EE \big[\int_{\eta(s)}^s\big\{ \overline{\sigma}\big(\tfrac{\partial^2u}{\partial x^2} (b-b(0)-\overline{b})+ \big(b'-
\tfrac{\partial \overline{b}}{\partial x} \big)\tfrac{\partial u}{\partial x}\big)\big\}
(\theta,\overline{X}_\theta, \overline{X}_{\eta(\theta)})\, dW_\theta\big]\\
\qquad \qquad +\EE\big[\int_{\eta(s)}^s\big\{\tfrac{1}{2}\overline{\sigma}^2
\big(\tfrac{\partial^3u}{\partial x^3} (b-b(0)-\overline{b})+\big(b'-\tfrac{\partial \overline{b}}{\partial x} \big)\tfrac{\partial^2 u}{\partial x^2}+b^{(2)}\tfrac{\partial u}{\partial x}\big)\big\}(\theta,\overline{X}_\theta, \overline{X}_{\eta(\theta)})\;d\theta\big]\\
\qquad  =:\;\EE[I_1^{1}]+ \EE[I_1^2]+\EE[I_1^3]+ \EE[I_1^4],
\end{array}
\end{align*}
\begin{align*}
\begin{array}{l}
\EE\;\big[I_2\big]
 =
\EE\big[\int_{\eta(s)}^s\big\{\tfrac{\partial^3 u}{\partial t \partial x^2}(\sigma^2\overline{X}_\theta^{2\alpha}
- \overline{\sigma}^2)
+2\sigma b(0)   \overline{X}_{\eta(\theta)}^{\alpha-1}  \overline{\sigma} \tfrac{\partial^2 u}{\partial x^2}\big\}(\theta,\overline{X}_\theta, \overline{X}_{\eta(\theta)}) \,d\theta\big]\\
\qquad \qquad +\EE\big[\int_{\eta(s)}^s\left(b(0)+\overline{b}\right)
\tfrac{\partial}{\partial x}\big\{\tfrac{\partial^2u}{\partial x^2}
\big[\sigma^2 \overline{X}_\theta^{2\alpha}- \overline{\sigma}^2\big]
\big\}
(\theta,\overline{X}_\theta,\overline{X}_{\eta(\theta)}) d\theta\big]\\
\qquad \qquad +\EE\big[\int_{\eta(s)}^s  \big\{ \overline{\sigma} \frac{\partial}{\partial x}\big\{\frac{\partial^2u}{\partial x^2}
\big[\sigma^2 \overline{X}_\theta^{2\alpha}- \overline{\sigma}^2\big]
\big\}\big\}(\theta,\overline{X}_\theta, \overline{X}_{\eta(\theta)})\,dW_\theta\big]\\
\qquad \qquad +\tfrac{\sigma^2}2\EE\big[\int_{\eta(s)}^s\left(\overline{X}_\theta- b(0)\delta(\theta) \right)^2\overline{X}_{\eta(\theta)}^{2\alpha-2}\frac{\partial^2}{\partial x^2}\big\{\frac{\partial^2u}{\partial x^2}
\big[\sigma^2 \overline{X}_\theta^{2\alpha}- \overline{\sigma}^2\big]
\big\}(\theta,\overline{X}_\theta, \overline{X}_{\eta(\theta)})\;d\theta\big]\\
\qquad  =:\EE[I_2^{1}]+ \EE[I_2^2]+  \EE[I_2^3]+\frac{\sigma^2}2 \EE[I_2^4].
\end{array}
\end{align*}
We use again the backward Kolmogorov PDE \eqref{eq:KolPDE} to compute the time derivatives
 \begin{align*}%\label{eq:dudtdx}
\tfrac{\partial^2 u}{\partial t \partial x}(t,x)
&=-b'(x)\tfrac{\partial u}{\partial x}(t,x)-b(x)\tfrac{\partial^2 u}{\partial x^2}(t,x)-\sigma^2\alpha x^{2\alpha-1}\tfrac{\partial^2 u}{\partial x^2}(t,x)-\tfrac{\sigma^2}2x^{2\alpha}\tfrac{\partial^3 u}{\partial x^3}(t,x).
\\
\tfrac{\partial^3 u}{\partial t \partial x^2}(t,x)& =\big\{-b^{(2)}\tfrac{\partial u}{\partial x}-\left(b+2\sigma^2\alpha x^{2\alpha-1}\right)\tfrac{\partial^3 u}{\partial x^3}-\left(2b'+\sigma^2\alpha(2\alpha-1) x^{2\alpha-2}\right)\tfrac{\partial^2 u}{\partial x^2} -\tfrac{\sigma^2}2x^{2\alpha}
\tfrac{\partial^4 u}{\partial x^4}\big\}(t,x).
\end{align*}
From \ref{C2:H_loclip'} we have for all $x\geq0$ and $i=1,2,3,4$
\begin{align}\label{eq:bar_b}
\tfrac{|b(x)-b(0)|}{x}&\leq C(1+x^{2\alpha-2}).
\end{align}
\indent The key of the proof is to upper bound each $I^k_j$ by combining the estimates of the derivatives of $u$, the polynomial growth of the drift and diffusion coefficients and its derivatives,  with upper-bounds of  moments of the exp-ES process obtained from Lemma \ref{leq:Schememoments}.  By considering the following Young inequality  for arbitrary $m,n\geq 0$,
\begin{align}\label{eq:M_nm}
|\overline{X}_\theta|^m \ |\overline{X}_{\eta(\theta)}|^n&\leq C \sup_{r\in[0,T]}|\overline{X}_r|^{m+n},
\end{align}
we get
\begin{equation}\label{eq:I_j^k}
\EE [|I_j^k|]\leq C \big(1+\sup_{0\leq \theta\leq T}\EE[\overline{X}_\theta^{\beta_{k,j}}]\big)(s-\eta(s)), 
\end{equation}
for all $j=1,2$, $k=1,2,4$ and some $\beta_{k,j}\in[0,2\beta]$. Then, substituting \eqref{eq:I_j^k} in \eqref{eq:dif} we recover the rate of order one for the weak approximation error:
\begin{align*}
\left|\EE[f(X_T)-f(\overline{X}_{T})]\right|\leq C\;\sum_{k=1}^N\int_{t_{k-1}}^{t_k}(s-\eta(s))ds\leq C \;\Dt.
\end{align*}

We detail  the analysis of the first term $|I_1^1|$:
\begin{align*}
\EE[| I_1^1|]&\leq  \mbox{$ \EE\big[\int_{\eta(s)}^s\big\{\left|\tfrac{\partial^2u}{\partial t \partial x}\right| (\theta,\overline{X}_\theta)\left(|b(\overline{X}_{\theta})-b(0)|+|\overline{b}(\theta,\overline{X}_\theta,\overline{X}_{\eta(\theta)})|\right)$} \\
&\hspace{2in}\mbox{$ +b(0)\left|\tfrac{\partial u}{ \partial x}\right|(\theta,\overline{X}_\theta)\tfrac{|b(\overline{X}_{\eta(\theta)})-b(0)|}{\overline{X}_{\eta(\theta)}}\big\}d\theta\big],$}
\end{align*}
where, from Proposition \ref{prop:KolPDE} and \eqref{eq:poly_bi}, \eqref{eq:bar_b}, we have
\begin{align*}
\left|\tfrac{\partial u}{ \partial x}\right|(\theta,\overline{X}_\theta)\tfrac{|b(\overline{X}_{\eta(\theta)})-b(0)|}{\overline{X}_{\eta(\theta)}}
\leq C \tfrac{|b(\overline{X}_{\eta(\theta)})-b(0)|}{\overline{X}_{\eta(\theta)}} &\leq C\big(1+\overline{X}_{\eta(\theta)}^{2\alpha-2}\big),\\
|b(\overline{X}_{\theta})-b(0)|+|\overline{b}(\theta,\overline{X}_\theta,\overline{X}_{\eta(\theta)})|
&
%+++\leq|b(\overline{X}_{\theta})-b(0)|+\overline{X}_{\theta}\left|\frac{b(\overline{X}_{\eta(\theta)})-b(0)}{\overline{X}_{\eta(\theta)}}\right|
~
\leq C \overline{X}_{\theta}\big(1+\overline{X}_{\theta}^{2\alpha-2}+\overline{X}_{\eta(\theta)}^{2\alpha-2}\big),
\end{align*}
and
\begin{align*}
& \big|\tfrac{\partial^2u}{\partial t \partial x}\big|(t,x) \leq|b'| |\tfrac{\partial u}{\partial x}|(t,x)+\left(|b(x)|+\sigma^2\alpha x^{2\alpha-1}\right)|\tfrac{\partial^2 u}{\partial x^2}|(t,x)+\frac{\sigma^2}2x^{2\alpha}|\tfrac{\partial^3 u}{\partial x^3}|(t,x)\\
&\leq C\big\{1+x^{\overline{\gamma}_{(1)}+1}+x^{-\underline{\gamma}_{(1)}}+\left(1+ x^{2\alpha-1}\right)(1+x^{\overline{\gamma}_{(2)}+1}+x^{-\underline{\gamma}_{(2)}})+(x^{\overline{\beta} +2\alpha }+x^{2\alpha -\underline{\beta}})\big\},
\end{align*}
Remaining with the biggest $\pm$ exponent (using from Lemma \ref{lem:LocallyLipschitz} that  $\underline{\gamma}_{(i)}\leq \underline{\gamma}_{(i+1)}$, $i=1,2,3$),
\begin{align}\label{eq:exemple_control}
x\big|\tfrac{\partial^2u}{\partial t \partial x}\big|(t,x) &\leq
C\big\{x^{\overline{\beta} +2\alpha +1 }+x^{(1-\underline{\gamma}_{(2)})\wedge (2\alpha+1 -\underline{\beta})}\big\},
\end{align}
where $\overline{\beta}=2(\overline{\gamma}_{(2)}+1)\vee(1+\overline{\gamma}_{(3)})$ and  $\underline{\beta}=2\underline{\gamma}_{(2)} \vee\underline{\gamma}_{(3)}\vee (\underline{\gamma}_{(2)}+3-2\alpha)$.
Therefore, we get
\begin{align*}
\EE [|I_1^1|]&\leq C \EE\big[\int_{\eta(s)}^s\big(1+\overline{X}_{\eta(\theta)}^{2\alpha-2}+\overline{X}_{\theta}^{4\alpha-1+\overline{\beta}}+\overline{X}_{\eta(\theta)}^{2\alpha-2}\overline{X}_{\theta}^{2\alpha+1+\overline{\beta}}
+\overline{X}_{\theta}^{(1-\underline{\gamma}_{(2)})\wedge (2\alpha+1 -\underline{\beta})} \ \big)d\theta\big].
\end{align*}
Since we do not have a priori control of negative moments of $\overline{X}$, we must impose $\underline{\gamma}_{(2)}\leq 1$ and $\underline{\gamma}_{(3)}\leq 2\alpha+1$. Thus, from \eqref{eq:M_nm}, we obtain as desired 
\begin{align*}
\EE [|I_1^1|]&\leq  \mbox{$ C\big(1+\sup_{r\in[0,T]}\EE\big[\overline{X}_{r}^{4\alpha-1+\overline{\beta}}\big]\big)~(s-\eta(s)).$} 
\end{align*}

The remaining terms can be bounded similarly. Explicitly, we get  the following bounds
\begin{align*}
\EE[|I_1^3|^2]&\leq  \mbox{$ C\big(1+\sup_{r\in[0,T]}\EE\big[\overline{X}_{r}^{6\alpha+2\overline{\gamma}_{(2)}} + \overline{X}_{r}^{2-2\underline{\gamma}_{(1)}}\big] \big)~(s-\eta(s))$}  ,\\
\EE[|I_2^3|^2]&\leq  \mbox{$ C\big(1+\sup_{r\in[0,T]}\EE\big[\overline{X}_{r}^{6\alpha+2{\overline{\beta}}} + \overline{X}_{r}^{4-2\underline{\gamma}_{(2)}} + \overline{X}_{r}^{6-2\underline{\beta}}\big]\big)~(s-\eta(s)),$} 
\end{align*}
that ensure that the stochastic integrals are martingales, and
\begin{align*}
\EE[|I_1^2|]&\leq  \mbox{$ C\big(1+ \sup_{r\in[0,T]}\EE[ \overline{X}_{r}^{4\alpha-2+\overline{\gamma}_{(2)}} + \overline{X}_{r}^{-\underline{\gamma}_{(1)}}+ \overline{X}_{r}^{1-\underline{\gamma}_{(2)}}]\big)~(s-\eta(s)), $} \\
\EE[|I_1^4|]&\leq  \mbox{$ C\big(1+\sup_{r\in[0,T]}\EE [ \overline{X}_{r}^{4\alpha-1+\overline{\beta}} + \overline{X}_{r}^{2-\underline{\gamma}_{(2)}-\underline{\gamma}_{(1)}}+ \overline{X}_{r}^{3-\underline{\beta}} ] \big)~(s-\eta(s)),$} \\
\EE[|I_2^1|]&\leq  \mbox{$ C\big(1+\sup_{r\in[0,T]}\EE[ \overline{X}_{r}^{4\alpha+\overline{\overline{\beta}}} + \overline{X}_{r}^{1-\underline{\gamma}_{(2)}} + \overline{X}_{r}^{2\alpha+2-\underline{\underline{\beta}}} + \overline{X}_{r}^{2-\underline{\beta}} ] \big)~(s-\eta(s)),$} \\ 
\EE[|I_2^2|]&\leq  \mbox{$ C\big( 1+ \sup_{r\in[0,T]} \EE[\overline{X}_{r}^{4\alpha-1 +{\overline{\beta}}} + \overline{X}_{r}^{1-\underline{\gamma}_{(2)}} + \overline{X}_{r}^{2-\underline{\beta}} ] \big)~(s-\eta(s)),$} \\
\EE[|I_2^4|]&\leq  \mbox{$  C\big(1+\sup_{r\in[0,T]}\EE [\overline{X}_{r}^{4\alpha+\overline{\overline{\beta}}} + \overline{X}_{r}^{2-\underline{\gamma}_ \}
{(2)}} + \overline{X}_{r}^{4-\underline{\underline{\beta}}} +\overline{X}_{r}^{3-\underline{\beta}} ]\big)~(s-\eta(s)).$} 
\end{align*}
In the previous inequalities, we observe that \ref{C2:H4_poly} eliminates all possible negative moments in the $I^i_j$:  for $|I_1^2|$,  \ref{C2:H4_poly} imposes $\underline{\gamma}_{(1)}=0$. Similarly, for $|I_2^1|$ and $|I_2^4|$ and the definition of $\underline{\beta}$, $\underline{\underline{\beta}}$ in Proposition \ref{prop:KolPDE},  \ref{C2:H4_poly} imposes $\underline{\gamma}_{(2)}\leq 1,~\underline{\gamma}_{(3)}\leq 2$ and $\underline{\gamma}_{(4)}\leq 4$, respectively. Further, the terms $|I_2^1|$ and $|I_2^3|$ contain the highest moments to be controlled, $4\alpha +\overline{\overline{\beta}}$ and $6\alpha+2\overline{\beta}$, both are less than the moment order $6\alpha +2\overline{\overline{\beta}} \vee (\overline{\overline{\beta}} +2\alpha) $ imposed by \ref{C2:H5}.

\appendix
\section{Proof of Lemmas \ref{lem:Jxepsilon_Jx_bound} and \ref{lem:ConvergenceJ}}\label{sec:app:proof_moments}
To simplify notation,  we omit the dependence on $\lambda$ in the processes $X^{x}, X^{x,\epsilon},J^{x},J^{x,\epsilon}$.
\paragraph{Proof of lemma \ref{lem:Jxepsilon_Jx_bound}. }
 We consider first the process  $J_t^{x,\epsilon}=\tfrac{X_t^{x+\epsilon}-X_t^x}{\epsilon}$ satisfying the linear SDE \eqref{eq:Jxe SDE}.
From Corollary \ref{cor:flux} with Condition \ref{eq:Lambda2},
\begin{align*}
\mbox{
$\EE\big[|\phi_t^\epsilon|^2\big] \leq\EE\big[\int_0^1(X_t^x+\epsilon\theta J_t^{x,\epsilon})^{4(\alpha-1)}d\theta\big]\leq C\;\{\EE[\left(X_t^x\right)^{4(\alpha-1)}]+\EE[\left(X_t^{x+\epsilon}\right)^{4(\alpha-1)}]\}<  C ,$}
\\
\mbox{
$\EE\big[|\psi_t^\epsilon|^2\big]\leq\EE\big[\int_0^1(X_t^x+\epsilon\theta J_t^{x,\epsilon})^{2(\alpha-1)}d\theta\big]\leq C\;\{\EE[\left(X_t^x\right)^{2(\alpha-1)}]+\EE[\left(X_t^{x+\epsilon}\right)^{2(\alpha-1)}]\}< C.$}
\end{align*}
Similarly, using also the $2(\alpha-1)$-locally Lipschitz property of $b$ in \ref{C2:H_loclip}
\begin{align*}
\EE\big[|\xi_t^\epsilon|^2\big]&\leq\EE\big[\big|\tfrac{b\left(X_t^{x+\epsilon}\right)-b\left(X_t^{x}\right)}{X_t^{x+\epsilon}-X_t^{x}}\big|^2\big]\leq C\;\left(1+\EE[\left(X_t^x\right)^{4(\alpha-1)}]+\EE[\left(X_t^{x+\epsilon}\right)^{4(\alpha-1)}]\right)\leq C.
\end{align*}
Therefore, $(\int_0^t \psi^\epsilon_s d\theta dW_s;\;0\leq t\leq T)$ is a square integrable martingale, Equation \eqref{eq:Jxe SDE} admits a unique strong solution given by the following exponential form (see e.g. \cite[Thm V.52]{Protter-04})
\begin{align}\label{eq:Jxepsilon exponential}
\mbox{$
J_t^{x,\epsilon}=
\exp\big\{\int_0^t\xi_s^\epsilon ds+\left(2\alpha-1\right)\lambda \sigma^2\int_0^t\phi_s^\epsilon ds+\alpha\sigma\int_0^t\psi_s^\epsilon dW_s -\tfrac{\alpha^2\sigma^2}{2}\int_0^t\left(\psi_s^\epsilon\right)^2ds\big\}.
$}
\end{align}
In turn, $J_t^{x,\epsilon}\geq 0$  yields the increasing property of the flow, $X_t^{x+\epsilon}\geq X_t^{x}$, and the increasing property of the map $\theta\mapsto \left(X_t^x+\epsilon\theta J_t^{x,\epsilon}\right)$ in $[0,1]$, from which we obtain the following relation
\begin{align}\label{eq:psi phi inequalit2}
\begin{aligned}
(X_t^x)^{2(\alpha-1)}\leq\phi_t^\epsilon&\leq (X_t^{x+\epsilon})^{2(\alpha-1)},\\
(X_t^x)^{\alpha-1}\leq\psi_t^\epsilon&\leq (X_t^{x+\epsilon})^{\alpha-1}.
\end{aligned}
\end{align}

From the exponential form  \eqref{eq:Jxepsilon exponential}, for all $q>0,$
\begin{align*}\label{eq:Jxepsilon alpha}
(J_t^{x,\epsilon})^q & =   \mbox{$ \exp\big\{(2\alpha-1)q\lambda \sigma^2\int_0^t\phi_s^\epsilon ds+q\int_0^t\xi_s^\epsilon ds+\alpha q \sigma\int_0^t\psi_s^\epsilon dW_s - \tfrac{\alpha^2\sigma^2}{2}q \int_0^t(\psi_s^\epsilon)^2 ds\big\} $} \nonumber\\
&\leq \exp\{qtB_1'\} \mbox{$ \exp\big\{q\sigma^2\int_0^t\big((2\alpha-1)\lambda-\tfrac{B_2'}{\sigma^2}\big)\phi_s^\epsilon ds+\left(q-1\right)\tfrac{q\sigma^2\alpha^2}2\int_0^t(\psi_s^\epsilon)^2 ds\big\} $} \\
&\hspace{1.2in} \times \mbox{$ \exp\big\{
\alpha q \sigma\int_0^t\psi_s^\epsilon dW_s -\tfrac{\alpha^2\sigma^2}2q^2 \int_0^t\left(\psi_s^\epsilon\right)^2ds\big\} $} ,
\end{align*}
where the last inequality is obtained from  \ref{C2:H_poly'}:
$\xi_t^\epsilon\leq\;B_1'-B_2'\phi_t^\epsilon$.
Since $(\psi_t^\epsilon)^2\leq \phi_t^\epsilon$,  by choosing $q>0$ such that $(2\alpha-1)\lambda+\tfrac{\alpha^2}{2}(q-1)\leq\tfrac{B_2'}{\sigma^2}$,  we get
\begin{align*}
(J_t^{x,\epsilon})^q \leq \exp\{qB_1't\} \ \exp\big\{\alpha q \sigma\int_0^t\psi_s^{\epsilon} dW_s-\tfrac{\alpha^2 q^2 \sigma^2}2\int_0^t\left(\psi_s^{\epsilon}\right)^{2} ds\big\}.
\end{align*}
Since the process in the right hand-side is a supermartingale, we have for all $t\in[0,T]$,
\[\EE\big[(J_t^{x,\epsilon})^q\big]\leq \exp\{qB_1'T\},\]
and from this, we deduce that
$\EE\big[|X_t^{x+\epsilon}-X_t^x|^q \big] \leq \epsilon^q \exp\{qB_1'T\}$ tends to $0$ with $\epsilon$.

The exponential form \eqref{eq:expoJlambda} allows to replicate the computation above for the process $J^x$, and conclude
similarly that for all $t\in[0,T]$, $\EE\big[(J_t^{x})^q\big]\leq \exp\{qB_1'T\}$.

\paragraph{Proof of Lemma \ref{lem:ConvergenceJ}. }
 Consider the difference $\mathcal{E}_t^{x,\epsilon}:=J_t^x-J_{t}^{x,\epsilon},$
which can be rewritten, using the SDEs \eqref{eq:derivativeSDE} and \eqref{eq:Jxe SDE}, as
\begin{align*}
\mathcal{E}_t^{x,\epsilon}
&= \mbox{$ \lambda\sigma^2(2\alpha-1)\int_0^t J_s^{x,\epsilon}\brac{(X_s^x)^{2(\alpha-1)}-\phi_s^\epsilon}ds +\lambda\sigma^2(2\alpha-1)\int_0^t ( \mathcal{E}_s^{x,\epsilon})(X_s^x)^{2(\alpha-1)}ds $} \\
&\quad+ \mbox{$ \int_0^t J_s^{x,\epsilon}\brac{b'(X_s^x)-\xi_s^\epsilon}ds +\int_0^t (\mathcal{E}_t^{x,\epsilon})b'(X_s^x)ds$} \\
&\quad+ \mbox{$ \alpha\sigma\int_0^t\brac{(X_s^x)^{\alpha-1}-\psi_s^\epsilon}J_s^{x,\epsilon}dW_s+\alpha\sigma\int_0^t (\mathcal{E}_t^{x,\epsilon})(X_s^x)^{\alpha-1}dW_s.$}
\end{align*}
Introducing the stopping time $\tau_M:=\{0\leq t\leq T:J_t^x-J_t^{x,\epsilon}\geq M\}$, with $M>0$,
It\^{o}'s formula yields
\begin{align}\label{eq:Jxepsilon-Jx}
\EE\big[|\mathcal{E}_{t\wedge\tau_M}^{x,\epsilon}|^2\big]&
= \mbox{$ 2\lambda\sigma^2(2\alpha-1)\EE\brac{\int_0^{t\wedge\tau_M} \mathcal{E}_{s}^{x,\epsilon} J_s^{x,\epsilon}\brac{(X_s^x)^{2(\alpha-1)}-\phi_s^\epsilon} +\left(\mathcal{E}_{s}^{x,\epsilon}\right)^2(X_s^x)^{2(\alpha-1)}ds} $} s\nonumber \\
&\quad+2\ \mbox{$ EE\brac{\int_0^{t\wedge\tau_M}{ \mathcal{E}_{s}^{x,\epsilon} J_s^{x,\epsilon}\brac{b'(X_s^x)-\xi_s^\epsilon} + \left(\mathcal{E}_{s}^{x,\epsilon}\right)^2 b'(X_s^x)}ds}$}  \nonumber \\
&\quad +\alpha^2\sigma^2 \mbox{$ \EE\brac{\int_0^{t\wedge\tau_M} \Big(\brac{(X_s^x)^{\alpha-1}-\psi_s^\epsilon}J_s^{x,\epsilon}+\mathcal{E}_{s}^{x,\epsilon}(X_s^x)^{\alpha-1}\Big)^2ds} $} .
\end{align}
Using \eqref{eq:psi phi inequalit2}, there exists a non-negative constant $C$ independent on $\epsilon$ and $M$ such that
\begin{align*}
\phi_t^\epsilon-(X_t^x)^{2(\alpha-1)}&\leq (X_t^{x+\epsilon})^{2(\alpha-1)}-(X_t^x)^{2(\alpha-1)}\leq  \epsilon \, C J_t^{x,\epsilon} \left(|X_t^{x+\epsilon}|^{2\alpha -3} + |X_t^{x}|^{2\alpha -3} \right)\\
\psi_t^\epsilon-(X_t^x)^{\alpha-1}&\leq (X_t^{x+\epsilon})^{\alpha-1}-(X_t^x)^{\alpha-1}\leq  \epsilon \,C J_t^{x,\epsilon} \left(|X_t^{x+\epsilon}|^{\alpha -2} +|X_t^{x}|^{\alpha -2}\right),
\end{align*}
and
\begin{align}\label{eq:T1}
\begin{aligned}
\EE\big[\big|(J_t^{x,\epsilon})^2((X_t^x)^{2(\alpha-1)}-\phi_t^\epsilon)\big|\big]
&\leq  \epsilon \, C \EE\big[|(J_t^{x,\epsilon})^3
\left(|X_t^{x+\epsilon}|^{2\alpha -3} + |X_t^{x}|^{2\alpha -3} \right)|\big], \\
\EE\big[\big|J_t^{x,\epsilon}J_t^{x}((X_t^x)^{2(\alpha-1)}-\phi_t^\epsilon)\big|\big]
 &\leq  \epsilon \, C \EE\big[|(J_t^{x,\epsilon})^2J_t^{x}
\left(|X_t^{x+\epsilon}|^{2\alpha -3} + |X_t^{x}|^{2\alpha -3} \right)|\big], \\
\EE\big[|(J_t^{x,\epsilon})^2 ((X_t^x)^{(\alpha-1)}-\psi_t^\epsilon)^2|\big]
&\leq   \epsilon \, C \EE[\left(J_t^{x,\epsilon}\right)^3\left(|X_t^{x+\epsilon}|^{2\alpha -3} + |X_t^{x}|^{2\alpha -3} \right)] .
\end{aligned}
\end{align}
Condition \eqref{eq:Lambda3} on $B'_2$ allows to bound up to $\EE[\left(J_t^{x,\epsilon}\right)^6]$, and on $B^\lambda_2$ allows to bound up to $\EE[|X_t^{x}|^{4(\alpha -1)}]$. Similarly, from the $(\overline{\gamma}_{(1)},\underline{\gamma}_{(1)})$-locally Lipschitz property of $b'$, there exists a constant $C\geq0$ independent on $\epsilon$ and $M$ such that
\begin{align}\label{eq:T3}
 \EE\big[\left|(J_t^{x,\epsilon})^2\left(b'(X_t^x)-\xi_t^\epsilon\right)\right|\big]
&= \EE\big|(J_t^{x,\epsilon})^2\mbox{$\int_0^1$} \left(b'(X_t^x)-b'\left(X_t^{x}(\lambda)+\theta\epsilon\;J_t^{x,\epsilon}(\lambda)\right)\right)d\theta\big|
\\
& \leq \mbox{$\int_0^1$} \big[\EE\left| (J_t^{x,\epsilon})^2 \left|b'(X_t^x)-b'(X_t^{x}+\epsilon\theta J_t^{x,\epsilon})\right| \right|\big] d\theta \nonumber\\
& \leq \epsilon \, C\EE\big[\big| \left(J_t^{x,\epsilon}\right)^2 (1+\left(X_t^{x+\epsilon}\right)^{\overline{\gamma}_{(1)}}+\left(X_t^{x}\right)^{-\underline{\gamma}_{(1)}}) \big|\big], \\ \nonumber
\EE\big[\left|J_t^{x,\epsilon}J_t^{x}\left(b'(X_t^x)-\xi_t^\epsilon\right)\right|\big]&\leq\epsilon \ C\EE\big[\big| J_t^{x,\epsilon}J_t^{x} (1+\left(X_t^{x+\epsilon}\right)^{\overline{\gamma}_{(1)}}+\left(X_t^{x}\right)^{-\underline{\gamma}_{(1)}}) \big|\big], \nonumber
\end{align}
and $\EE[|X_t^{x}|^{2\overline{\gamma}_{(1)}}]$ is bounded under \eqref{eq:Lambda3}.

In \eqref{eq:Jxepsilon-Jx}, summing separately the three terms multiplying
 $(\mathcal{E}_{s}^{x,\epsilon})^2$, using \ref{C2:H_poly'} and next the Condition  \ref{eq:Lambda3} on $B'_2$  we get
\begin{align*}
& \left(\mathcal{E}_{s}^{x,\epsilon}\right)^2\big[ 2\lambda\sigma^2(2\alpha-1)(X_t^x)^{2(\alpha-1)}
+ 2  b'(X_s^x)  +2\alpha^2\sigma^2 (X_s^x)^{2(\alpha-1)}\big] \\
& \leq \left(\mathcal{E}_{s}^{x,\epsilon}\right)^2\big[ 2\lambda\sigma^2(2\alpha-1)(X_t^x)^{2(\alpha-1)}
+ 2  B'_1 - 2 B'_2 (X_s^x)^{2(\alpha -1)}  +\alpha^2\sigma^2 (X_s^x)^{2(\alpha-1)}\big] \leq  2  B'_1 \left(\mathcal{E}_{s}^{x,\epsilon}\right)^2.
\end{align*}
Coming back to \eqref{eq:Jxepsilon-Jx} using inequalities \eqref{eq:T1} and \eqref{eq:T3}, and $\mathcal{E}^{x,\epsilon} \leq J_t^{x,\epsilon} + J_t^{x}$,
\begin{align*}
\mbox{$\EE\big[|\mathcal{E}_{t\wedge\tau_M}^x|^2\big] \leq C \epsilon + 2 B'_1 \int_0^{t} \EE\big[|\mathcal{E}_{s\wedge\tau_M}^x|^2\big] ds,$}
\end{align*}
and applying Gronwall's Lemma. From which we obtain $\EE(\mathcal{E}_{t\wedge\tau_M}^{x,\epsilon})^2 \leq C~ {\epsilon}$ for all $t\in[0,T]$. We end this proof by taking limits $M\rightarrow+\infty$ and $\epsilon\rightarrow0$.

\section{ 
Final step for the proof of Proposition \ref{prop:KolPDE}}\label{sec:app:proof_FK}

We define for some non-negative integers $n,m$, the function $\widetilde{b}_{n,m}$ as
\[\widetilde{b}_{n,m}(x) = n b'(x)+m\alpha\sigma^2(2\alpha-1)x^{2(\alpha-1)},\quad~\forall x\geq0,\]
with, for $j=0,1,2,3$, and using the fact that $2\alpha-1 \leq \overline{\gamma}_{(j+1)} + j$ (see \ref{C2:H_loclip'}),
\begin{align}\label{eq:btilde}
\begin{aligned}
|\widetilde{b}^{(j)}_{n,m}|(x)&\leq C_{n,m} (1+ x^{-(\underline{\gamma}_{(j+1)} \vee (j -2(\alpha-1))}+x^{\overline{\gamma}_{(j+1)}+1}),\\
|\widetilde{b}^{(j)}_{n,0}|(x)&\leq C_n (1+ x^{-\underline{\gamma}_{(j+1)}}+x^{\overline{\gamma}_{(j+1)}+1}),
\end{aligned}
\end{align}
with the help of Lemma \ref{lem:LocallyLipschitz},
From \eqref{eq:D2After} and \eqref{eq:D2After_bis}, we get
\begin{align}\label{eq:d2u/dx2}
\tfrac{\partial^2 u}{\partial x^2}(t,x)&= \mbox{$ \EE\big[f^{(2)}(X_{T-t}^x(2\alpha))\exp\big\{\int_0^{T-t}\widetilde{b}_{2,1}(X_s^x(2\alpha))ds\big\}\big]$} \\
&\quad+ \mbox{$ \int_0^{T-t}\EE\big[\exp\big\{\int_0^{s}\widetilde{b}_{2,1}(X_s^x(2\alpha))ds\big\}\widetilde{b}_{1,0}^{\prime}(X_s^x(2\alpha))\tfrac{\partial u}{\partial x}(t+s,X_s^x(2\alpha))\big]ds. $}\nonumber
\end{align}
We identify \eqref{eq:d2u/dx2} with the form \eqref{eq:DerivativeRewritten} with
\begin{align*}
&f_2=f^{(2)} ~\mbox{ bounded}, \\
&g_2 = \widetilde{b}_{2,1} ~ \mbox{ bounded from above (assuming  $2B'_2 \geq \alpha (2\alpha -1) \sigma^2$)}~\mbox{  with } \\
&\qquad \qquad  |g_2'|(x) \leq C (1+x^{- (\underline{\gamma}_{(2)}\vee 3 -2\alpha )} + x^{\overline{\gamma}_{(2)} +1}), \\
& h_2=\widetilde{b}_{1,0}'\tfrac{\partial u}{\partial x}~\mbox{  with, using \eqref{eq:upper_first}}, \quad  |h_2|(x)\leq (1+x^{(\overline{\gamma}_{(2)} + 1)} + x^{-\underline{\gamma}_{(2)}}).
\end{align*}
Using \eqref{eq:btilde}  on $\widetilde{b}$, and the control of $\tfrac{\partial^2 u}{\partial x^2}$ in \eqref{eq:upper_second}, we determine the powers involved in the upper bound of $|h'_2|$ (that will coincide with the moments to bound for the control of $\tfrac{\partial^3 u}{\partial x^3}$) by evaluating
\begin{align*}
&\widetilde{b}_{3,1}^{\prime}(x)\tfrac{\partial^2 u}{\partial x^2}(\cdot,x) +
\widetilde{b}_{1,0}^{(2)}(x)  \\
&\quad \preceq (1+ x^{-(\underline{\gamma}_{(2)} \vee (3 -2\alpha))}+x^{\overline{\gamma}_{(2)}+1})(1+x^{\overline{\gamma}_{(2)}+1}+x^{-\underline{\gamma}_{(2)}}) + x^{-\underline{\gamma}_{(3)}}+x^{\overline{\gamma}_{(3)}+1}\preceq  1+x^{\overline{\beta}}
+ x^{\underline{\beta}},
\end{align*}
(using the Hardy symbol $\preceq$ as asymptotic notation) and hence $|h_2'|(x)\leq (1+x^{\overline{\beta}} + x^{-\underline{\beta}}
)$,  with
\begin{align*}
\overline{\beta} :=  2(\overline{\gamma}_{(2)}+1)\vee (\overline{\gamma}_{(3)}+1), \mbox{ and }  \underline{\beta} :=2\underline{\gamma}_{(2)}\vee ( \underline{\gamma}_{(2)} +3 -2\alpha) \vee  \underline{\gamma}_{(3)}.
\end{align*}
Therefore, we apply Proposition \ref{prop:interchang} with $\rho_0=\overline{\beta}$,  $\rho_2=\rho_4=\overline{\gamma}_{(2)}+1$, that must satisfy Condition \ref{eq:Lambda1} for $\lambda = 2\alpha$:
\begin{align}\label{eq:D3Before}
\tfrac{\partial^3 u}{\partial x^3}(t,x) &= \mbox{$ \EE\big[\exp\big\{\int_0^{T-t}\widetilde{b}_{2,1}(X_s^x)ds\big\}\big(f^{(3)}(X_{T-t}^x)J_{T-t}^x+f^{(2)}(X_{T-t}^x)\int_0^{T-t}\widetilde{b}_{2,1}'(X_s^x)J_{s}^xds\big)\big]$}  \nonumber \\
&\quad+ \mbox{$ \int_0^{T-t}\EE\big[\exp\big\{\int_0^{s}\widetilde{b}_{2,1}(X_s^x)ds\big\}\widetilde{b}_{1,0}^{\prime}(X_s^x)\tfrac{\partial u}{\partial x}(t+s,X_s^x)\int_0^s\widetilde{b}_{2,1}^{\prime}(X_r^x)J_r^xdr\big]ds$}  \nonumber \\
&\quad+ \mbox{$\int_0^{T-t}\EE\big[\exp\big\{\int_0^{s}\widetilde{b}_{2,1}(X_s^x)ds\big\}\widetilde{b}_{1,0}^{\prime}(X_s^x)\tfrac{\partial^2 u}{\partial x^2}(t+s,X_s^x)J_s^x\big]ds$}  \nonumber \\
&\quad+\mbox{$ \int_0^{T-t}\EE\big[\exp\big\{\int_0^{s}\widetilde{b}_{2,1}(X_s^x)ds\big\}\widetilde{b}_{1,0}^{(2)}(X_s^x)\tfrac{\partial u}{\partial x}(t+s,X_s^x)J_s^x\big]ds$} ,
\end{align}
where we write $X^x$ and $J^x$ for $X^x(2\alpha)$ and $J^x(2\alpha)$.

\paragraph{Estimates on $\tfrac{\partial^3 u}{\partial x^3}$ and $\tfrac{\partial^4 u}{\partial x^4}$. }
We apply the same technique as for the second derivative, namely, we rewrite the second and fourth terms of the sum in \eqref{eq:D3Before}, using the Markov property and time homogeneity of the process $(X_s^x(2\alpha);0\leq s \leq T-t)$ for the second term in $f^{(2)}$ in \eqref{eq:D3Before}:
\begin{align*}\label{eq:MarkovProp2}
& \mbox{$ \EE\big[f^{(2)}(X_{T-t}^x)\exp\big\{\int_s^{T-t}\widetilde{b}_{2,1}(X_s^x)ds\big\}\big|\Ff_s\big]$} \\
& = \mbox{$ \EE\big[f^{(2)}(X_{T-t-s}^y)\exp\big\{\int_0^{T-t-s}\widetilde{b}_{2,1}(X_r^y)dr\big\}\big]\big|_{y=X_s^x} $} \\
&=\tfrac{\partial^2 u}{\partial x^2}(t+s,X_s^x)
  \mbox{$-\int_0^{T-t-s}\EE\big[\exp\big\{\int_0^{r}\widetilde{b}_{2,1}(X_u^y)du\big\}\widetilde{b}_{1,0}^{\prime}(X_r^y)\tfrac{\partial u}{\partial x}(t+s+r,X_r^y)\big]\big|_{y=X_s^x}dr$} \\
&=\tfrac{\partial^2 u}{\partial x^2}(t+s,X_s^x)
 \mbox{$ -\int_s^{T-t}\EE\big[\exp\big\{\int_s^{r}\widetilde{b}_{2,1}(X_u^x)du\big\}\widetilde{b}_{1,0}^{\prime}(X_r^x))\tfrac{\partial u}{\partial x}(t+r,X_r^x)\big|\Ff_s\big]dr$} .
\end{align*}
We also use an integration by part in the second line of  \eqref{eq:D3Before}:
\begin{align*}
&\mbox{$ \EE\big[\int_0^{T-t}\big(\exp\big\{\int_0^{s}\widetilde{b}_{2,1}(X_r^x )dr\big\}\widetilde{b}_{1,0}^{\prime}(X_s^x )\tfrac{\partial u}{\partial x}(t+s,X_s^x )\big)\int_0^s\widetilde{b}_{2,1}^{\prime}(X_r^x )J_r^x dr\;ds\big]$} \\
&= \mbox{$ \EE\big[\int_0^{T-t}\big(\int_s^{T-t}\exp\big\{\int_0^{r}\widetilde{b}_{2,1}(X_u^x )du\big\}\widetilde{b}_{1,0}^{\prime}(X_r^x )\tfrac{\partial u}{\partial x}(t+r,X_r^x )dr\big)\widetilde{b}_{2,1}^{\prime}(X_s^x )J_s^x ds\big],$}
\end{align*}
where again we write $X^x$ and $J^x$ for $X^x(2\alpha)$ and $J^x(2\alpha)$. Then, substituting in \eqref{eq:D3Before} we get
\begin{align*}
\tfrac{\partial^3 u}{\partial x^3}(t,x) =& \mbox{$ \EE\big[\exp\big\{\int_0^{T-t}\widetilde{b}_{2,1}(X_s^x(2\alpha))ds\big\}f^{(3)}(X_{T-t}^x(2\alpha))J_{T-t}^x(\alpha)\big]$} \\
& \quad  \mbox{$ +\int_0^{T-t}\EE\big[\exp\big\{\int_0^{s}\widetilde{b}_{2,1}(X_r^x(2\alpha))dr\big\}\big(\widetilde{b}_{3,1}^{\prime}(X_s^x(2\alpha))\tfrac{\partial^2 u}{\partial x^2}(t+s,X_s^x(2\alpha)) $} \\
& \qquad\qquad\qquad\qquad\qquad+\widetilde{b}_{1,0}^{(2)}(X_s^x(2\alpha))\tfrac{\partial u}{\partial x}(t+s,X_s^x(2\alpha))\big)J_s^x(2\alpha)\big]ds.
\end{align*}
We consider the change of measure $\mathbb{Q}^{3\alpha}$ through the density $\mathcal{Z}_t^{(2\alpha,3\alpha)}$ (assuming \eqref{hypo:constraint} with $\lambda=2\alpha$, for which we observe that
\[
\mbox{$\exp\big\{\int_0^{t}\widetilde{b}_{2,1}(X_s^x(2\alpha))ds\big\}J_{t}^x(2\alpha )\mathcal{Z}_{t}^{(2\alpha,3\alpha)}=\exp\big\{ \int_0^{t}\widetilde{b}_{3,3}(X_s^x(2\alpha))ds \big\}.$}\]
Therefore, using again that $\textit{Law}^{\mathbb{Q}^{3\alpha}}(X^x(2\alpha))=\textit{Law}^{\PP}(X^x(3\alpha))$,  we  obtain
\begin{align}\label{eq:d3udx2}
\tfrac{\partial^3 u}{\partial x^3}(t,x)
= &  \mbox{$ \EE\big[\exp\big\{\int_0^{T-t}\widetilde{b}_{3,3}(X_s^x(3\alpha))ds\big\}f^{(3)}(X_{T-t}^x(3\alpha))\big]$}  \nonumber\\
& + \mbox{$ \int_0^{T-t}\EE\big[\exp\big\{\int_0^{s}\widetilde{b}_{3,3}(X_r^x(3\alpha))dr\big\}\big(\widetilde{b}_{3,1}^{\prime}(X_s^x(3\alpha))\tfrac{\partial^2 u}{\partial x^2}(t+s,X_s^x(3\alpha)) $} \nonumber \\
& \qquad\qquad\qquad+\widetilde{b}_{1,0}^{(2)}(X_s^x(3\alpha))\tfrac{\partial u}{\partial x}(t+s,X_s^x(3\alpha))\big)\big]ds.
\end{align}
Notice that $\widetilde{b}_{3,3}$ is bounded from above assuming $B'_2 \geq \alpha \sigma^2(2\alpha -1)$. By means of the boundedness of $\tfrac{\partial u}{\partial x}$ and $f^{(i)}$, we stay with
\begin{align*}
\mbox{$ \big|\tfrac{\partial^3 u}{\partial x^3}\big|(t,x) \leq C\big(1+\int_0^{T-t}\EE\big[|\widetilde{b}_{3,1}^{\prime}(X_s^x(3\alpha))\tfrac{\partial^2 u}{\partial x^2}(t+s,X_s^x(3\alpha))+\widetilde{b}_{1,0}^{(2)}(X_s^x(3\alpha))|\big]ds\big).$}
\end{align*}
Now using Corollary \ref{cor:flux}, with  $\overline{\beta}\leq 1 + \tfrac{2B_2^{3 \alpha}}{\sigma^2}$, we get
\begin{align}\label{eq:upper_third}
\sup_{t\in[0,T]}\big|\tfrac{\partial^3 u}{\partial x^3}\big|(t,x)\leq C\;(1+x^{-\overline{\beta}}+ x^{\underline{\beta}}).
\end{align}

In order to apply Proposition \ref{prop:interchang} a last time, we  identify in \eqref{eq:d3udx2}  the form \eqref{eq:DerivativeRewritten} with
\begin{align*}
& f_3(x)=f^{(3)}(x),~\mbox{ bounded,}\\
& g_3(x)=\widetilde{b}_{3,3}(x),~\mbox{bounded from above when $B_2' \geq \alpha \sigma^2(2\alpha-1)$}, \mbox{  with }  \\
& \qquad\qquad  \qquad |g_3'|(x) \leq C (1+x^{\overline{\gamma}_{(2)} + 1} + x^{-(\underline{\gamma}_{(2)} \vee (3-2\alpha) )}), \\
& h_3(\cdot;x)=\widetilde{b}_{3,1}'(x)\tfrac{\partial^2 u}{\partial x^2}(\cdot,x)+\widetilde{b}_{1,0}^{(2)}(x)\tfrac{\partial u}{\partial x}(\cdot,x),
~\mbox{ with}\quad |h_3|(\cdot;x)\leq C(1+x^{\overline{\beta}} +x^{- \underline{\beta}}).
\end{align*}
Again, using \eqref{eq:upper_first}, \eqref{eq:upper_second} and \eqref{eq:upper_third}, we estimate  the powers involved in the expression on $|h'_3|$ (that will coincide with the moments to bound for the control of $\tfrac{\partial^4 u}{\partial x^4}$) by evaluating
\begin{align*}
&\widetilde{b}_{3,1}^{\prime}(x)\tfrac{\partial^3 u}{\partial x^3}(\cdot,x) +
\widetilde{b}_{1,0}^{(3)}(x) + \widetilde{b}_{3,1}^{(2)} (x)\tfrac{\partial^2 u}{\partial x^2}(\cdot,x)\\
&
\preceq (1+ x^{-(\underline{\gamma}_{(2)} \vee (3 -2\alpha))}+x^{\overline{\gamma}_{(2)}+1})  \ (1+x^{\overline{\beta}}+x^{-\underline{\beta}})\
+  x^{-\underline{\gamma}_{(4)}}+x^{\overline{\gamma}_{(4)}+1} \\
& \quad
+ (1+x^{\overline{\gamma}_{(2)}+1}+x^{-\underline{\gamma}_{(2)}})(1+ x^{-(\underline{\gamma}_{(3)} \vee (4 -2\alpha))}+x^{\overline{\gamma}_{(3)}+1})\\
& \preceq(1+x^{\overline{\overline{\beta}}}+x^{-\underline{\underline{\beta}}}),
\end{align*}
with
\begin{align*}
\overline{\overline{\beta}} & := \big\{\overline{\beta} + (\overline{\gamma}_{(2)}+1) \big\}  \vee \big\{ \overline{\gamma}_{(2)}+ \overline{\gamma}_{(3)}+2 \big\} \vee (\overline{\gamma}_{(4)}+1)\\
& \quad = 3(\overline{\gamma}_{(2)}+1)\vee ( \overline{\gamma}_{(2)} + \overline{\gamma}_{(3)}+2) \vee (\overline{\gamma}_{(4)}+1),
\\
\underline{\underline{\beta}}  & := \big\{\big(\underline{\gamma}_{(2)} \vee (3 -2\alpha)\big)  + \underline{\beta} \big\}\vee \underline{\gamma}_{(4)} \vee \big\{\underline{\gamma}_{(2)} + \big(\underline{\gamma}_{(3)} \vee (4 -2\alpha)\big)\big\}\\
& \quad = \big\{\big(\underline{\gamma}_{(2)} \vee (3 -2\alpha)\big)  + (2\underline{\gamma}_{(2)}\vee ( \underline{\gamma}_{(2)} +3 -2\alpha) \vee  \underline{\gamma}_{(3)}) \big\}\vee \big\{\underline{\gamma}_{(2)} + \big(\underline{\gamma}_{(3)} \vee (4 -2\alpha)\big)\big\}\vee \underline{\gamma}_{(4)}.
\end{align*}
Then, assuming  $\overline{\overline{\beta}} \leq \tfrac12+\tfrac{B_2^{3\alpha}}{\sigma^2}$, we apply Proposition \ref{prop:interchang},
  obtaining $u\in\mathcal{C}^{1,4}([0,T]\times\RR^+)$. Using the Markov property and the time homogeneity of the process $(X_s^x(3\alpha);0\leq s \leq T-t)$, we deduce the following form  (with $X_s^x$ understood as $X_s^x(3\alpha)$)
\begin{align}\label{eq:D4After1}
\begin{aligned}
\tfrac{\partial^4 u}{\partial x^4}(t,x)
= & \mbox{$ \EE\big[\exp\big\{\int_0^{T-t}\widetilde{b}_{3,3}(X_s^x)ds\big\}f^{(4)}(X_{T-t}^x)J_{T-t}^x\big] $}\\
& + \mbox{$\int_0^{T-t}\EE\big[\exp\big\{\int_0^{s}\widetilde{b}_{3,3}(X_s^x)ds\big\}\big(\widetilde{b}_{4,1}^{(2)}(X_s^x)\tfrac{\partial^2 u}{\partial x^2}(t+s,X_s^x)$} \\
& \qquad +\widetilde{b}_{1,0}^{(3)}(X_s^x)\tfrac{\partial u}{\partial x}(t+s,X_s^x)+\widetilde{b}_{6,4}'(X_s^x)\tfrac{\partial^3 u}{\partial x^3}(t+s,X_s^x)\big)J_s^x\big]ds.
\end{aligned}
\end{align}
Considering the change of measure $\mathbb{Q}^{4\alpha}$ with density $\mathcal{Z}_t^{(3\alpha,4\alpha)}$, we have
$$
\mbox{$ \exp\big\{\int_0^{T-t}\widetilde{b}_{3,3}(X_s^x)ds\big\}J_{T-t}^x(3\alpha )\mathcal{Z}_{T-t}^{(3\alpha,4\alpha)}=\exp\big\{ \int_0^{T-t}\widetilde{b}_{4,6}(X_s^x(3\alpha))ds\big\}\leq C$},$$
with $\widetilde{b}_{4,6}(x) = 4 b'(x) + 6\alpha \sigma^2(2\alpha -1) x^{2(\alpha-1)}$ bounded from above according to
$$\widetilde{b}_{4,6}(x) \leq  4 B_1^\prime - 4 B_2^\prime x^{2(\alpha-1)} + 6\alpha \sigma^2(2\alpha -1) x^{2(\alpha-1)}, $$
and the assumption that $B_2^\prime > \tfrac{6}{4}\alpha \sigma^2(2\alpha -1)$. Therefore we start to bound $|\tfrac{\partial^4 u}{\partial x^4}|$ with
\begin{align*}%\label{eq:D4After}
\big|\tfrac{\partial^4 u}{\partial x^4}\big| (t,x)
\leq C\big( 1 + \sup_{s\in[0,T]} \EE\big|\big\{\widetilde{b}_{4,1}^{(2)}\tfrac{\partial^2 u}{\partial x^2}(t+s)
+\widetilde{b}_{1,0}^{(3)}\tfrac{\partial u}{\partial x}(t+s)+\widetilde{b}_{6,4}'\tfrac{\partial^3 u}{\partial x^3}(t+s)\big\}(X_s^x(4\alpha)) \big|\big).
\end{align*}
Combining this with the previous polynomial bounds for the derivatives and the control of moments for the process $X^x(4\alpha)$ in Corollary \ref{cor:flux}, under \ref{C2:H5}, we get
\begin{align*}\mbox{$
\big|\tfrac{\partial^4 u}{\partial x^4}\big|(t,x)
\leq C\;(1+x^{\overline{\overline{\beta}}} + x^{-\underline{\underline{\beta}}}). $}
\end{align*}

We have obtained that $u\in\mathcal{C}^{1,4}([0,T]\times[0,+\infty))$ with partial derivatives satisfying \eqref{eq:PDE bounds1}. In view of the polynomial growth property of the maps $x\mapsto\frac{\partial u}{\partial x}(t,x)$, $\tfrac{\partial^2 u}{\partial x^2}(t,x)$, $b(x)$, $x^\alpha$ and the appropriate control of the $\overline{\overline{\beta}}$-th moment of the flow,  one can easily adapt the proof in Friedman \cite[Ch. 5, Th 6.1]{Friedman} to show that $u(t,x)$ solves the  Kolmogorov PDE \eqref{eq:KolPDE}.

We end this proof by reporting the conditions required  on $B_2, B'_2$, $\sigma$, $\alpha$, $\overline{\gamma}_{(i)}$, $\underline{\gamma}_{(i)}$  in order to get all the controls to be applied in  the previous steps, the combination of which forming  \ref{C2:H5}:

$\bullet$ At most we used the upper-bound on the moment $\sup_{t\in[0,T]}(\EE|(X_t(4\alpha))^{\overline{\overline{\beta}}}|+\EE|(X_t(3\alpha))^{2\overline{\overline{\beta}}}|)$, by applying Corollary \ref{cor:flux} with  the double constrain that
$\tfrac{\sigma^2}{2} \big( \overline{\overline{\beta}}  + 8\alpha -1\big)  \leq B_2$ and $\tfrac{\sigma^2}{2} \big( 2\overline{\overline{\beta}}  + 6\alpha -1\big)  \leq B_2$,
knowing that $ \overline{\overline{\beta}} := 3(\overline{\gamma}_{(2)}+1)\vee ( \overline{\gamma}_{(2)} + \overline{\gamma}_{(3)}+2) \vee (\overline{\gamma}_{(4)}+1)$.

$\bullet$ We also have to justify the use of Girsanov transform, by applying Lemma \ref{lemJ} under the sufficient condition that
\begin{align*}
\begin{array}{ll}
\mbox{ if } b(0)=0,&~~ \quad  \tfrac{\sigma^2}{2}(7\alpha -1)  \leq {B_2}, ~~\mbox{already satisfied} \\
\mbox{ if } b(0)>0,&~~\quad  \tfrac32< \alpha, \quad\mbox{ and }\quad  \tfrac{\sigma^2}{2} \big(6 \alpha + \tfrac{\alpha^2}{\sigma^2}\big) \leq  B_2.
\end{array}
\end{align*}

$\bullet$ We have bounded the terms involving  $J_t$ coming after the Girsanov transform, and at most  the term 
$$\exp\big\{\int_0^{T-t}\widetilde{b}_{3,3}(X_s^x)ds\big\} \ J_{T-t}^x(3\alpha ) \ \mathcal{Z}_{T-t}^{(3\alpha,4\alpha)},$$
by assuming $B_2^\prime > \sigma^2\alpha(3 \alpha -\tfrac{3}{2})$.

$\bullet$ Finally, we considered the necessary condition on $B_2'$ in order to apply Proposition \ref{prop:interchang} up to $\lambda=3\alpha$:
${B_2'}\geq \sigma^2\alpha( \tfrac{17}{2} \alpha -3 )$.

\section{Proof of Proposition \ref{prop:XMoments} and related Lemmas}\label{sec:appendix:proof_wellposedness}

\paragraph{Proof of Proposition \ref{prop:XMoments}. }
From the Lamperti-type transformation $\widetilde{X}=X_t^{-2(\alpha-1)}$, with $\alpha>1$,  the well-posedness of \eqref{eq:IntroSDE} essentially relies on the existence and uniqueness of a positive solution to the one-dimensional SDE
\begin{equation}\label{SDEAux1}
d\widetilde{X}_t=\widetilde{b}(\widetilde{X}_t)dt - 2\sigma(\alpha-1)\sqrt{\widetilde{X}_t}~d{W}_t,~~\widetilde{X}_0=x^{-2(\alpha-1)},
\end{equation}
where  the drift function $\widetilde{b}$,  defined as
\[\widetilde{b}(x)=(\alpha-1)\left(\sigma^2(2\alpha-1)-
2{{x}^{1 + \frac{1}{2(\alpha -1)}}b({x}^{- \frac{1}{2(\alpha-1)}})}\right),\]
is a $(\overline{\gamma}, \underline{\gamma})$-locally Lipschitz, for some strictly positive  $\overline{\gamma}$ and $\underline{\gamma}$. Then pathwise uniqueness holds for the solution of \eqref{SDEAux1}  (see e.g. Ikeda-Watanabe~\cite[Theorem 3.1]{IkedaWatanabe}).

\paragraph{Feller test for non explosion. } From the regularity of the coefficients of \eqref{SDEAux1}, a weak solution up to an explosion time exists (in the sense of Definition 5.1 in Karatzas-Shreve~\cite{KarShr-88}).  Considering
\[
S_{n}:=\inf\big\{0\leq t: \widetilde{X}_t \notin (\tfrac{1}{n}, n)\big\}, \quad S = \lim_{n\rightarrow+\infty} S_n,
\]
we use a Feller test  to show that $\PP(S=+\infty) = 1$, or equivalently that $v(0^+):=\lim_{x\rightarrow 0^+}v(x) = v(+\infty):=\lim_{x\rightarrow +\infty}v(x) = +\infty$, where $x\mapsto v(x)$ is defined from the scale function $x \mapsto p(x)$ as
\begin{align*}
p(x)&=\int_c^x\exp\{-\int_c^z \tfrac{\widetilde{b}(y)}{2\sigma^2(\alpha-1)^2y} dy\} \ dz ,\quad\quad v(x) = \int_c^x p'(y)\big(\int_c^y\tfrac{dz}{2\sigma^2(\alpha-1)^2~p'(z)z}\big)dy,
\end{align*}
with $c>0$ (e.g. \cite[Theorem 5.29]{KarShr-88}). From the  $2(\alpha-1)$-locally Lipschitz continuity of $b$,  we have, for all  $x>0$,
\[b(x^{-\frac1{2(\alpha-1)}})\geq b(0)- Cx^{-\frac1{2(\alpha-1)}}(1+x^{-1}).
\]
Multiplying both sides of the inequality by $-x^{\tfrac{1}{2(\alpha-1)}}$, it follows that
\begin{align*}
\frac{\widetilde{b}(x)}{(\alpha-1)x}  = \frac{\sigma^2(2\alpha-1)}{x}-2x^{\tfrac{1}{2(\alpha-1)}}b(x^{-\tfrac1{2(\alpha-1)}})
\leq \frac{(2\alpha-1)\sigma^2+2C}{x}+2C-2b(0)x^{\tfrac1{2(\alpha-1)}},
\end{align*}
and
\begin{align}\label{eq:bTilde growth}
\widetilde{b}(x)\leq (\alpha-1)\big(2C+(2\alpha-1)\sigma^2+2C x -2b(0)~x^{\frac{2\alpha-1}{2(\alpha-1)}}\big).
\end{align}
In the same way, using \ref{C2:H_poly}, we get the lower bounds
\[
\frac{(2\alpha-1)\sigma^2+2B_2}{x}-2B_1-2b(0) x^{\frac1{2(\alpha-1)}}\leq \frac{\widetilde{b}(x)}{(\alpha-1)x},
\]
\begin{align*}
\text{and} \quad\widetilde{b}(x) \geq  (\alpha-1)(2\alpha-1)\sigma^2+2B_2 -2B_1 (\alpha -1) x -2b(0) (\alpha -1) x^{\frac{2\alpha -1}{2(\alpha-1)}}.
\end{align*}
From the lower and upper bounds on $\frac{\tilde{b}(x)}{x(\alpha-1)}$, we derive the following estimates for $p(0^+)$ and $p(+\infty)$:
\begin{align}\label{eq:bounds p tilde}
\begin{aligned}
&p(0^+)= -\int_0^c\exp\{\int_z ^c\tfrac{\widetilde{b}(y)}{2\sigma^2(\alpha-1)^2y} dy\}\ dz \leq -\int_0^c\exp\big\{\beta(B_2)\log(\tfrac{c}{z})\big\}dz = -\infty,\\
&p(+\infty)\geq \int_c^\infty\exp\{-\beta(C)\log(\tfrac{z}{c})+\frac{2b(0)}{\sigma^2(2\alpha-1)}(z^{\tfrac{2\alpha-1}{2\alpha-2}}
-c^{\frac{2\alpha-1}{2\alpha-2}})-\tfrac{C}{\sigma^2(\alpha-1)}(z -c)\} \ dz,
\end{aligned}
\end{align}
where $a\mapsto\beta(a)=\tfrac{{2a}/{\sigma^2}+2\alpha-1}{2(\alpha-1)}> 1$ for $a\geq 0$. Therefore we have that $p(0^+)=-\infty$  implies $v(0^+)=+\infty$ (see e.g \cite{KarShr-88}, Problem 5.27).

To check that $v(+\infty)=+\infty$, we distinguish the  two cases: $b(0)>0$ and $b(0)=0$ in the upper-bound \eqref{eq:bounds p tilde}.  If $b(0)>0$, the dominant term in \eqref{eq:bounds p tilde} corresponds to the integral of $z\mapsto \exp\{\frac{2b(0)}{\sigma^2(2\alpha-1)}z ^{\frac{2\alpha-1}{2\alpha-2}}\},$ from which we obtain $p(+\infty)=+\infty$ implying $v(+\infty)=+\infty$. If $b(0)=0$, we must analyze the behavior of $v$ at infinity, for which we have
\begin{align*}
v(+\infty)&\simeq \int_c^\infty\int_c^yz^{-1}\exp\{-\int_z^y\frac{\widetilde{b}(\theta)}{2\sigma^2(\alpha-1)^2\theta} d\theta\}\ dz dy\\
&\quad \geq \int_c^\infty\int_c^yz^{-1} \exp\{- \beta(C)\log(\frac{y}{z}) - \frac{C}{\sigma^2(\alpha-1)}(y-z)\} \ dz dy \\
&\quad \geq \int_c^\infty\int_c^yz^{-1} \exp\{\widetilde{\beta} \log(\frac{z}{y}) - \frac{C}{\sigma^2(\alpha-1)}(y-z)\} \ dz dy \\
&\qquad = \int_c^\infty y^{-\widetilde{\beta}}\exp \{-\frac{C y}{\sigma^2(\alpha-1)}\}
\int_c^y z^{\widetilde{\beta}-1} \exp\{\frac{C z}{\sigma^2(\alpha-1)}\} \ dz dy,
\end{align*}
where we are allowed to choose $\widetilde{\beta}> \beta(C)+1$ such that $\widetilde{\beta}-2>0$.
Applying an integration by parts on the inner integral, we have
\begin{align*}
&\int_c^yz^{\widetilde{\beta}-1} \exp\{\frac{C z}{\sigma^2(\alpha-1)}\} \ dz dy \\
& = \frac{\sigma^2(\alpha-1)}{C} \big[y^{\widetilde{\beta}-1 } \exp\{\frac{C y }{\sigma^2(\alpha-1)}\} -c^{\widetilde{\beta}-1 } \exp\{\frac{C c }{\sigma^2(\alpha-1)}\} - \int_c^y (\beta-1) z^{\widetilde{\beta}-2}   \exp\{\frac{C z}{\sigma^2(\alpha-1)}\} \ dz\big]
\end{align*}
and consequently
\begin{align*}
&v(+\infty)  \\
& \mbox{$\geq \frac{\sigma^2(\alpha-1)}{C} \big[ \int_c^\infty\frac{dy}{y}\big[ 1 -    \frac{c^{\widetilde{\beta}-1 }}{y^{\widetilde{\beta}-1}} \exp\{-\frac{C (y-c)}{\sigma^2(\alpha-1)}\} \big] - \int_c^\infty \frac{dy}{y^2} \int_c^y (\beta-1) \frac{z^{\widetilde{\beta}-2}}{y^{\widetilde{\beta}-2}} \exp\{-\frac{C (y -z)}{\sigma^2(\alpha-1)}\} \ dz \big].$}
\end{align*}
The second integral being  finite and the first one diverging  to
$+\infty$, we can conclude that $v(+\infty)=+\infty$ and thus there exists a unique strictly positive strong solution to \eqref{SDEAux1} for all $t\in[0,T]$.  Using reversely the Lamperti transformation, this immediately implies  that  $X_t={\widetilde{X}_t}^{\frac1{2(1-\alpha)}}$ satisfies the SDE \eqref{eq:IntroSDE} on $(0,T]$ and the pathwise uniqueness of strictly positive solution is also granted.
\paragraph{Moment controls for the solution to \eqref{eq:IntroSDE}. }
Applying It\^{o}'s formula to the stopped process $({X}_{t\wedge\tau_M};0\leq t\leq T)$ with $\tau_M=\inf\{t\in[0,T]:{X}_t\geq M\},$ and, using \ref{C2:H_poly}, we get: for all $p\geq\tfrac12$
\begin{align*}
&\EE[X_{t\wedge \tau_M}^{2p}]= x^{2p}+2p\EE\brac{\int_0^{t\wedge \tau_M}X_s^{2p-1}\big\{b(X_s)
+(2p-1)\tfrac{\sigma^2}{2}X_s^{2\alpha-1}\big\}ds}\\
&\mbox{$\leq x^{2p}+2p\int_0^t\left\{B_1 \EE [X_{s\wedge \tau_M}^{2p}]
+b(0) \EE[X_{s\wedge \tau_M}^{2p-1}] +\tfrac12\left(\sigma^2(2p-1)-2B_2\right)\EE[ X_{s\wedge \tau_M}^{2p+2\alpha-2}]\right\}ds.$}
\end{align*}
Then, for all $p$ such that $1\leq 2p\leq \tfrac{2B_2}{\sigma^2}+1$, the last term is non-positive and so, for some constant $C$ depending on the parameters $p,B_i, b(0)$ and $T$, for all $0\leq t\leq T$, we get
\begin{align*}
\EE[X_{t\wedge \tau_M}^{2p}]&\leq x^{2p}+C\int_0^t\EE[X_{s\wedge \tau_M}^{2p}]ds.
\end{align*}
From Gronwall's inequality, we conclude on the $2p$\,th-moment control of $X$. We extend the result for an arbitrary exponent $0\leq 2p\leq 1 $, by simply applying H\"{o}lder's inequality.

\paragraph{Proof of Lemma \ref{lem:Negative}.  }
Let $\widetilde{X}$ be the previous Lamperti transform,  solution to \eqref{SDEAux1}. By means of the It\^o's lemma (for simplification, we omit the localization argument previously used in the proof of Proposition \ref{prop:XMoments}) and the upper bound \eqref{eq:bTilde growth}, we have for all $t\in[0,T]$, and for all power $q>0$
\begin{align*}
\EE[\widetilde{X}_{t}^{q}]
= & x^{-2(\alpha-1)q}+q\EE\big[\int_0^{t}\widetilde{X}^{q-1}_s(2(\alpha-1)^2\sigma^2(q-1)+\widetilde{b}(\widetilde{X}_s))ds\big]\\
&\leq x^{-2(\alpha-1)}+q(\alpha-1)\int_0^{t}\EE\big[\widetilde{X}^{q-1}_s\left(2(\alpha-1)\sigma^2~q+\varphi(\widetilde{X}_s) \right)\big] ds,
\end{align*}
where $x\mapsto \varphi(x):= 2C+\sigma^2+ 2C x - 2b(0) x^{\frac{2\alpha-1}{2\alpha-2}}$ achieves its maximum at the point $x^* = (\tfrac{2 C (\alpha-1)}{b(0)(2\alpha-1)})^{2\alpha-2}$. It follows that
\begin{align}\label{eq:estimationNegative}
\EE[\widetilde{X}_{t}^{q}]&\leq x^{-2(\alpha-1)q}+(\alpha-1)\varphi(x^*)\int_0^{t}q\EE[\widetilde{X}^{q-1}_s] ds+2(\alpha-1)^2\sigma^2\int_0^{t}q^2\EE[\widetilde{X}^{q-1}_s] ds.
\end{align}
By Gronwall's lemma, we deduce the finiteness of all positive moments of $\widetilde{X}$, and retrieving $X$ through the transformation $X_t={\widetilde{X}_t}^{\frac1{2(1-\alpha)}}$, we  conclude on the lemma.

\paragraph{Proof of Lemma  \ref{lem:Exponentials}. }
When $b(0)=0$, we consider CIR process $Y$, unique strictly positive  strong solution to the SDE
\begin{equation}\label{SDEAux}
dY_t=(\alpha-1)\big\{\big[\sigma^2(2\alpha-1)+2B_2\big]-2B_1Y_t\big\}dt-2\sigma(\alpha-1)\sqrt{ Y_t} dW_t,~~Y_0=x^{2(1-\alpha)}
\end{equation}
(the strict positivity is granted as two times the drift at point zero is greater than the square of the diffusion parameter, see e.g. \cite{BD15}). Applying the classical comparison principle for one-dimensional SDEs (see  e.g \cite[Chp.VI, Th 1.1]{IkedaWatanabe}), we have
\begin{align*}
\widetilde{X}_{t}\geq Y_{t},~~\mbox{or equivalently},~~ X_t\leq Y_t^{\frac1{2(1-\alpha)}}, \quad\PP-\mbox{a.s.}\forall t\in[0,T].
\end{align*}
The bound  \eqref{ExpMoments0} directly follows from the application of Lemma A.2 in \cite{BD15} (with $\nu=\frac{\sigma^2+2B_2}{2\sigma^2(\alpha-1)}$).

We consider now the case $b(0)>0$. By Itô's formula
\begin{equation*}
\mu\int_0^tX_s^{2\alpha-2}ds = \frac{2\mu}{\sigma^2}\left(\int_0^t\frac{b(X_s)}{X_s}ds+\sigma\int_0^tX_s^{\alpha-1}dW_s-\log\left(\frac{X_t}{x}\right)\right).
\end{equation*}
Using the growth condition of $b$ in \ref{C2:H_poly}, we get
\begin{align*}
\begin{aligned}
& \EE\big[\exp\{\mu\int_0^tX_s^{2\alpha-2}ds\}\big]
= \EE\big[\left(\frac{X_t}{x}\right)^{-\frac{2\mu}{\sigma^2}}\exp\{\frac{2\mu}{\sigma^2}\int_0^t\frac{b(X_s)}{X_s}ds+\frac{2\mu}{\sigma^2}\int_0^tX_s^{\alpha-1}dW_s\}\big]\\
&\leq C~ \EE\big[\left(\frac{X_t}{x}\right)^{-\frac{2\mu}{\sigma^2}}
\exp\{\frac{2\mu b(0)}{\sigma^2}\int_0^t\frac{1}{X_s}ds\}
\exp\{-\frac{2\mu B_2}{\sigma^2}\int_0^t X_s^{2(\alpha-1)}ds+\frac{2\mu}{\sigma^2}\int_0^tX_s^{\alpha-1}dW_s\}\big].
\end{aligned}
\end{align*}
Hence, applying H\"{o}lder's inequality for  $q,p$ such that $1=\frac{1}{q} +\frac{1}{p}+\frac\mu{B_2 \sigma^2}$,  we have
\begin{align*}
\EE\big[\exp\{\mu\int_0^tX_s^{2\alpha-2}ds\}\big]
&\leq C(1+x^{\frac{2\mu}{\sigma^2}})~\EE^{\frac{1}{p}}\big[\exp\{\frac{2\mu b(0)~p}{\sigma^2}\int_0^t\frac{1}{X_s}ds\}\big]\\
&\hspace{0.5in}\times\EE^{\frac{\mu}{B_2\sigma^2}}\big[\exp\{-2 B_2^2\int_0^t X_s^{2(\alpha-1)}ds+2B_2\int_0^tX_s^{\alpha-1}dW_s\}\big]\\
&\leq C(1+x^{\frac{2\mu}{\sigma^2}})~\EE^{\frac{1}{p}}\big[\exp\{\frac{2\mu b(0)~p}{\sigma^2}\int_0^t\frac{1}{X_s}ds\}\big]
\end{align*}
(using the  introduced notation $\EE^\beta$).
Expanding the last term using series and using Jensen's inequality, we have for all $t\in[0,T]$ and all $\mu< B_2\sigma^2$
\begin{align}\label{eq:expomoments}
\begin{aligned}
\EE\big[\exp\{\mu\int_0^tX_s^{2\alpha-2}ds\}\big]&\leq C(1+x^{\frac{2\mu}{\sigma^2}})\EE^{\frac{1}{p}}\Big[\sum_{k\geq 0}\frac{1}{k!}{\left(\theta\int_0^t\frac{1}{X_s}ds\right)^k}\Big] \\
&\leq C(1+x^{\frac{2\mu}{\sigma^2}})\Big(\sum_{k\geq 0}\frac{(\theta~T)^k}{k!}\sup_{t\in[0,T]}\EE[X_t^{-k}]\Big)^{\frac{1}{p}}
\end{aligned}
\end{align}
where $\theta = \frac{2\mu b(0) p}{\sigma^2}$ and the inequality in \eqref{eq:expomoments} is obtained from the Fubini-Tonelli Theorem.

It remains to prove the finiteness of the series in the inequality above.
Yet estimate on the negative moments of $X$ in Lemma \ref{lem:Negative} (or positive moments for $\widetilde{X}$) does not permit to directly conclude  as we need to explicit the way the order $k$ appears in the estimation. To overcome this difficulty we come back to estimate \eqref{eq:estimationNegative} and balance the dependence on the power as follows: for all $t\in[0,T]$ and all power $q>1$, by
Young's inequality, we have
$\widetilde{X}_t^{q-1}\leq \frac1q +\widetilde{X}_t^q$ and $q\widetilde{X}_t^{q-1}\leq q^{q-1} +\widetilde{X}_t^q$ from which we deduce that for any $a>0$ and $b>0$
\[
a q\EE[\widetilde{X}_t^{q-1}] +  b q^2\EE[\widetilde{X}_t^{q-1}]
\leq a + a q\EE[\widetilde{X}_t^q] +  b q\left( q^{q-1}+\EE[\widetilde{X}_t^q]\right)=a + b q^{q}+ (a+b)q\EE[\widetilde{X}_t^q].
\]
Applying these  to the right hand side of \eqref{eq:estimationNegative} gives
\[
\EE[\widetilde{X}_{t}^{q}]\leq x^{-2(\alpha-1)q}
+a_1 + 2(\alpha-1)^2\sigma^2 q^q
+  a_2 (\alpha-1) q\EE[\widetilde{X}_t^q],
\]
where $a_1 =(\alpha-1)\varphi(x^*)$,   $a_2=\sigma^2(2\alpha-2)+\varphi(x^*)$. Next, applying Gronwall's inequality, we get
\begin{align*}
\sup_{t\in[0,T]}\EE[\widetilde{X}_{t}^{q}]&\leq
\left(x^{-2(\alpha-1)q} + a_1 + 2(\alpha-1)^2\sigma^2 q^q\right)
\exp\{a_2 (\alpha-1) q T\},
\end{align*}
and this bound can be extended for arbitrary $q>0$ using Jensen's inequality. The estimation of the $(-k)$-th moment for $X$ is then  obtained from the transformation $X_t^{-k}={\widetilde{X}_t}^{\frac{k}{2(\alpha-1)}}$:
\begin{equation}\label{eq:NegativesM estimation}
\sup_{t\in[0,T]}\EE[{X}_{t}^{-k}]\leq \Big(x^{-k}+a_1+2(\alpha-1)^2\sigma^2\big(\frac{k}{2(\alpha-1)}\big)^{\frac{k}{2(\alpha-1)}}\Big)\exp\{a_2~\frac{k}{2} T\}.
\end{equation}
Coming back to \eqref{eq:expomoments}, we use the above estimate as follows
\begin{align}\label{eq:seriesEstim}
\begin{aligned}
&\sum_{k=0}^{+\infty}\frac{1}{k!}\left(\theta\,T \right)^k\sup_{t\in[0,T]}\EE[X_t^{-k}]\\
&\leq \sum_{k=0}^{+\infty} \frac{1}{k!}\big(\theta\,T e^{\frac{a_2}{2}T} \big)^k(x^{-k}+a_1)+2(\alpha-1)^2\sigma^2\sum_{k=0}^{+\infty}\frac{1}{k!}\big(\theta\,T e^{\frac{a_2}{2}T}\big)^k \big(\frac{k}{2\alpha-2}\big)^{\frac{k}{2(\alpha-1)}}\\
&\leq\exp\{\frac{\theta\,T e^{\frac{a_2}{2}T}}{x}\}+a_1\exp\{{\theta\,T e^{\frac{a_2}{2}T}}\}+2(\alpha-1)^2\sigma^2\sum_{k=0}^{+\infty}\beta(k),
\end{aligned}
\end{align}
by setting $\beta(k)=\tfrac{1}{k!}\left(\tfrac{\theta~T \exp\{\frac{a_2}{2}T\}}{(2\alpha-2)^{1/(2\alpha-2)}}\right)^k ~k^{\frac{k}{2(\alpha-1)}}$. We observe that
\begin{align*}
\lim_{k\rightarrow+\infty}\frac{\beta(k+1)}{\beta(k)}&=\frac{\theta\,T e^{\frac{a_2}{2}T}}{(2\alpha-2)^{\frac{1}{2(\alpha-1)}}}\lim_{k\rightarrow\infty}\left(\frac{k+1}{k}\right)^{\frac{k}{2(\alpha-1)}}~\frac1{(k+1)^{\frac{2\alpha-3}{2(\alpha-1)}}},
\end{align*}
which converges to zero when $\alpha>\frac32$, and thus
$\sum_{k=0}^{\infty}\beta(k)$ is finite.
This ends the proof by substituting \eqref{eq:seriesEstim} in \eqref{eq:expomoments}.

\section{Some complementary numerical experiments}\label{appendix:numerical}

We complement Section \ref{sec:Numeric}  by testing the obtained theoretical convergence  rate through a larger set of model cases  \eqref{eq:proto_CEV_with_const}, where the parameters of the model satisfy all the assumptions, and some other cases where not all the hypotheses are fulfilled.  In particular we illustrate the fact that hypothesis  \ref{C2:H5} do not correspond to a necessary condition.

We also report on the comparison between exp-ES and other proposed schemes of the literature (see Section \ref{sec:comparison}).

\paragraph{Numerical parameters.  }For all the presented numerical experiments,  we consider a unit terminal time $T=1$, the initial condition $x=1$ and the time step $\Delta t=1/2^p$, for $p=1,\ldots, 10$. In addition, the simulation of the ensemble average of the scheme $\EE f(\overline{X}_T)$ is performed by a Monte Carlo approximation, according to $n=10^5$ independent trajectories.

\paragraph{Test functions. }Along this section, we consider four different test functions, not all bounded,
\[f(x)=x,\;x^2,\;\frac1x,\;\exp(-x^2).\]

\paragraph{Model cases. }Denoting $\kappa$  as the left-handside of \eqref{eq:constraint_a1}  or \eqref{eq:constraint_a32}, we consider the following cases, determined by the data $(B_0, B_1,B_2,\sigma,\alpha)$
\begin{center}
\begin{tabular}{lll}
{\bf Case 1}\qquad
$(0,0,2,\tfrac{1}{10},\tfrac32)$
&  $dX_t = -2X_t^{2} dt+ \frac{X_t^{3/2}}{10} dW_t$
&\qquad $\kappa > 1.95$ \\
\blue{{\bf Case 2}\qquad %
$(0,0,3,1,\tfrac54)$ }
&\blue{   $dX_t = -3X_t^{{3}/{2}} dt+ X_t^{{5}/{4}} dW_t $}
&\qquad\blue{ $\kappa < -3$ }\\
\blue{{\bf Case 3}\qquad %
$(0,0,1,1,\tfrac32)$}
&\blue{   $dX_t = -X_t^{2} dt+\sigma X_t^{3/2} dW_t$}
&\qquad\blue{ $\kappa < -3$} \\
\blue{ {\bf Case 4}\qquad %
$(1,1,\tfrac{2}{5},\tfrac{1}{10},3)$ }
&\blue{  $dX_t =(1+X_t -\tfrac{2}{5} X_t^{5} )dt+\tfrac{X_t^{3}}{10} dW_t$}
&\qquad\blue{$\kappa < -4$ }\\
{\bf  Case 5}\qquad $(0,0,10,\tfrac12,\tfrac98)$
&  $dX_t = -10X_t^{5/4} dt+ \frac{X_t^{9/8}}{2} dW_t $
&\qquad $\kappa>8$ \\
\blue{{\bf  Case 6}\qquad  $(0,0,\frac{1}{100},\frac{1}{10},\tfrac54)$ } &\blue{ $dX_t = -\frac{X_t^{2}}{100} dt+\frac{X_t^{5/4}}{10} dW_t$}
&\qquad\blue{ $\kappa < 0$ }\\
{\bf  Case 7}\qquad $(0,0,\frac{2}{5},\frac{1}{10},3)$
&  $dX_t = -\frac{2}{5} X_t^{5} dt+\frac{X_t^{3}}{10} dW_t$
&\qquad $\kappa > 0.2$
\end{tabular}
\end{center}
with three of them, {\bf Cases 2, 3} and {\bf 6},  that are not satisfying \ref{C2:H5}.

The model in {\bf Case 3} satisfies assumptions of Theorem 2.1 in \cite{Kloeden} that states that  the approximated moments by the  Euler-Maruyama scheme and the strong $L^p$-error associated to moment-approximations diverges. Also in \cite{Kloeden}, the authors prove the divergence in the weak sense for  the $p$-th moments of the Euler-Maruyama scheme in that case.

\paragraph{Computation of the reference values. }
From Proposition \ref{prop:XMoments} and Lemma \ref{lem:Negative}, it is easy to check that for {\bf Case 1} to {\bf Case 7},  the parameters guarantee the finiteness of the expectation $\EE f(X_T)$ for each test function in the set $\{x,x^2,\frac1x,\exp(-x^2)\}$.

For both test functions $f(x)=x$ and $f(x)=x^2$,   it is possible to compute the reference values of $\EE[X_T]$ and $\EE[X_T^2]$, for {\bf Case 1} to {\bf Case 6}, through Lamperti transformation leading to CIR process for which a closed form for negative moments is available (see details in  \cite[Chapter 1]{kerlyns}). Thus, the first and the second moment reduces to
\begin{align*}
\EE[X_T]&= \tfrac{\left(4\sigma(\alpha-1)\right)^{\frac{1}{1-\alpha}}}{\Gamma(\frac1{2\alpha-2})}\int_0^1r^{\tfrac{1}{2(\alpha-1)}-1}(1-r)^{-\frac{B_2}{\sigma^2(\alpha-1)}}\exp\big\{-\tfrac{r}{2\sigma^2(\alpha-1)^2}\big\}dr,\\
\EE[X_T^2]&= \tfrac{\left(2\sigma^2(\alpha-1)^2\right)^{\frac{1}{1-\alpha}}}{\Gamma(\frac1{\alpha-1})}\int_0^1r^{\tfrac{1}{\alpha-1}-1}(1-r)^{-\frac{\sigma^2+2B_2}{2\sigma^2(\alpha-1)}}\exp\big\{-\tfrac{r}{2\sigma^2(\alpha-1)^2}\big\}dr.
\end{align*}
The choices of the power $\alpha$ in the Cases listed above are made in order to correspond to some explicit values of $\Gamma(\frac1{2\alpha-2})$ and  $\Gamma(\frac1{\alpha-1})$.

For the other cases, $f(x)=\frac1x$ and $f(x)=\exp(-x^2)$, we compute the reference values based on a Monte Carlo method combined with  the scheme exp-ES \eqref{eq:DecomposednumericScheme2}
\[\EE[f(X_T)]\approx\frac{1}{n_0}\sum_{i=1}^{n_0}f(\overline{X}_T(\omega_i,\Delta t_{\text{ref}}))\]
with $n_0=10^6$ or $10^7$, depending on the case,  and  $\Delta t_{\text{ref}} =2^{-14}$.

\medskip
Numerical results are shown in Table \ref{tab:C2experiment1-sup}. Except for {\bf Case 6} in red cells and {\bf Case 3} with test function $f(x)=x^2$,   we can observe the rate of convergence of order one in all the rows corresponding to the selection of bounded/unbounded test functions: the error is divided by 2 when going from left to right,  even if some saturation can be observed for the smallest error values $(p=8,9,10)$ when Monte Carlo error starts to be dominant.   This behavior is also confirmed in Figure \ref{im:C2experiment1}, illustrating the results obtained in Table \ref{tab:C2experiment1} in a log-log scale, with an additional line (in black) representing the reference convergence order 1.  This confirms that our proofs can certainly be extended for a larger class of test functions, and model parameters.

In particular, we highlight the parameter {\bf Case 3} $(0,0,1,1,3/2)$ that converges weakly with order one for $f(x) = x, \frac1x, \exp\{-x^2\}$, even if \ref{C2:H5} is not fulfilled, and even moreover   we know that the classical Euler-Maruyama scheme is  strongly diverging (as stated in \cite{Kloeden}) in this case.

\begin{figure}[H]%[ht!]
\centering
{\includegraphics[width=0.95\textwidth]{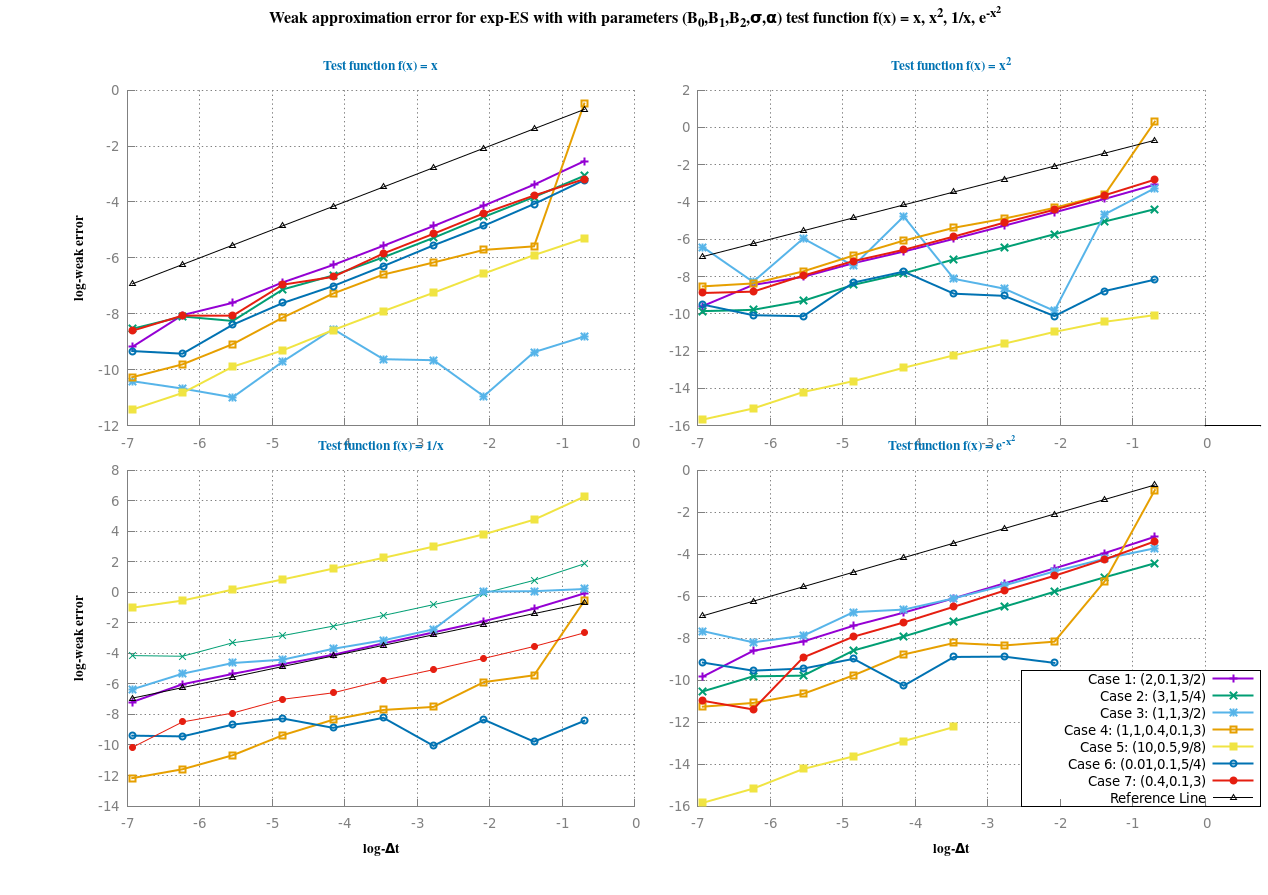}}
%\rule{\textwidth}{.1pt}
\caption{\label{im:C2experiment1-sup} Weak approximation error for the exponential-Euler scheme applied to  \eqref{eq:proto_CEV_with_const}, with  {\bf Case 1  to 7} (in log-log scale), the weak error is compared with the reference slope of order 1 (black).}
\end{figure}

\begin{table}[H]%[ht]
\centering
{\footnotesize{
\begin{tabular}{l| l l l l l l l l}
\toprule
\multicolumn{9}{c}{Weak Error with $\Delta t={2^{-p}}$, for $p=2,\ldots,9$}\\
\hline
\multicolumn{9}{c}{\tabhead{Case 1: $(B_0, B_1, B_2,\sigma,\alpha) = (0,0,2,\frac{1}{10},\frac32)$ and \ref{C2:H5} is valid.}}\\
\hline
\tabhead{{\small Test function}}
& \tabhead{$p=2$} &\tabhead{$p=3$}& \tabhead{$p=4$} & \tabhead{$p=5$} &\tabhead{$p=6$} & \tabhead{$p=7$} & \tabhead{$p=8$}&\tabhead{$p=9$}\\
\midrule
$f(x)=x$ & 3.397e-2& 1.606e-2& 7.756e-3& 3.823e-3& 1.923e-3& 1.033e-3& 4.965e-4&3.199e-4 \\
$f(x)=x^2$ & 2.147e-2& 1.043e-2&5.102e-3 &2.529e-3 &1.277e-3 &6.864e-4 &3.297e-4&2.131e-4\\
$f(x)=\frac1x$ & 3.451e-1& 1.521e-1&7.311e-2 &3.556e-2 &1.678e-2 & 9.122e-3& 4.793e-3& 2.428e-3\\
$f(x)=e^{-x^2}$ & 1.94e-2& 9.378e-3& 4.568e-3&2.258e-3 &1.135e-3 &6.06e-4 &2.874e-4 &1.829e-4\\ \hline
\multicolumn{9}{c}{\cellcolor{blue!20}\tabhead{Case 2: $(B_0, B1, B_2,\sigma,\alpha) =(0,0,3,1, \frac54)$ and \ref{C2:H5} is  not valid.}}\\
\hline
\tabhead{{}}
& \tabhead{$p=2$} &\tabhead{$p=3$}& \tabhead{$p=4$} & \tabhead{$p=5$} &\tabhead{$p=6$} & \tabhead{$p=7$} & \tabhead{$p=8$}&\tabhead{$p=9$}\\
\midrule
$f(x)=x$& 2.179e-2& 1.069e-2& 5.07e-3& 2.529e-3 & 1.32e-3& 8.021e-4& 2.598e-4&3.043e-4\\
$f(x)=x^2$ &6.412e-3 &3.243e-3 &1.582e-3 &8.397e-4 &3.965e-4 &2.148e-4 &9.101e-5 &5.065e-5 \\
$f(x)=\frac1x$ & 2.208& 9.48e-1& 4.503e-1&2.215e-1 &1.1e-1 & 5.93e-2&3.68e-2 & 1.526e-2 \\
$f(x)=e^{-x^2}$ & 6.113e-3&3.07e-3 &1.5e-3 &7.5e-4 &3.65e-4 &1.868e-4 &5.625e-5 &5.439e-5  \\ \hline
\multicolumn{9}{c}{\cellcolor{violet!20}\tabhead{Case 3: $(B_0,B_1,B_2,\sigma,\alpha) =(0,0,1,1, \frac32)$ and \ref{C2:H5} is  not valid.}}\\
\hline
\tabhead{{}} & \tabhead{$p=2$} &\tabhead{$p=3$}& \tabhead{$p=4$} & \tabhead{$p=5$} &\tabhead{$p=6$} & \tabhead{$p=7$} & \tabhead{$p=8$}&\tabhead{$p=9$}\\
\midrule
$f(x)=x$& 2.3e-2& 1.219e-2&5.864e-3 &2.893e-3 & 1.255e-3& 9.507e-4& 3.13e-4& 3.14e-4\\
\cellcolor{red!25} $f(x)=x^2$ \cellcolor{red!25}  &\cellcolor{red!25}  9.408e-3 &\cellcolor{red!25}   5.302e-5 & \cellcolor{red!25}  1.749e-4 & \cellcolor{red!25}  2.956e-4 &\cellcolor{red!25}  8.41e-3 & \cellcolor{red!25}  5.95e-4 &\cellcolor{red!25}  2.574e-3 & \cellcolor{red!25}  2.52e-4  \\
$f(x)=\frac1x$ &1.084 & 1.058& 8.919e-2&4.341e-2 & 2.483e-2& 1.224e-2& 9.783e-3& 4.861e-3  \\
$f(x)=e^{-x^2}$ & 1.485e-2&8.108e-3 &4.162e-3 &2.248e-3 &1.31e-3 &1.164e-3 &3.78e-4 &2.762e-4   \\ \hline
\multicolumn{9}{c}{ \cellcolor{blue!25}  \tabhead{Case 4: $(B_0,B_1,B_2,\sigma,\alpha) = (1,1,\frac25,\frac{1}{10},3)$ and \ref{C2:H5} is not valid.}}\\
\hline
\tabhead{{}}
& \tabhead{$p=2$} &\tabhead{$p=3$}& \tabhead{$p=4$} & \tabhead{$p=5$} &\tabhead{$p=6$} & \tabhead{$p=7$} & \tabhead{$p=8$}&\tabhead{$p=9$}\\
\midrule
$f(x)=x$ & 3.741e-3& 3.292e-3& 2.103e-3& 1.364e-3& 6.915e-4 & 2.936e-4& 1.131e-4& 5.497e-5\\
$f(x)=x^2$ & 2.687e-2&1.332e-2 &7.476e-3 &4.584e-3 &2.302e-3 &1.034e-3 &4.423e-4 &2.312e-4 \\
$f(x)=\frac1x$ &4.374e-3 & 2.804e-3& 5.505e-4& 4.508e-4& 2.383e-4&8.651e-5 & 2.313e-5& 9.249e-6 \\
$f(x)=e^{-x^2}$ &5.027e-3 &2.846e-4 &2.372e-4 &2.666e-4 & 1.545e-4& 5.72e-5 &2.363e-5 &1.529e-5  \\ \hline
\multicolumn{9}{c}{\tabhead{Case 5: $(B_0, B_1, B_2,\sigma,\alpha) = (0,0, 10,\frac{1}{2},\frac98)$ and \ref{C2:H5} is valid.}}\\
\hline
\tabhead{{}}
& \tabhead{$p=2$} &\tabhead{$p=3$}& \tabhead{$p=4$} & \tabhead{$p=5$} &\tabhead{$p=6$} & \tabhead{$p=7$} & \tabhead{$p=8$}&\tabhead{$p=9$}\\
\midrule
$f(x)=x$ & 2.735e-3& 1.413e-3&7.122e-4 & 3.69e-4 & 1.873e-4&9.082e-5 &5.053e-5 &1.979e-5 \\
$f(x)=x^2$ &2.955e-5 &1.718e-5 &9.153e-6 &4.892e-6 &2.511e-6 &1.226e-6 &6.833e-7 &2.82e-7 \\
$f(x)=\frac1x$ & 1.175e+2& 4.523e+1& 2.025e+1&9.639 & 4.765&2.35 & 1.202& 5.874e-1\\
$f(x)=e^{-x^2}$ &2.952e-5 &1.715e-5 &9.128e-6 &4.867e-6 &2.486e-6 &1.201e-6 &6.583e-7 &2.571e-7 \\
\hline
\multicolumn{9}{c}{\cellcolor{red!25}\tabhead{Case 6: $(B_0, B_1,B_2,\sigma,\alpha) =(0,0,\frac1{100},\frac1{10}, \frac54)$ and \ref{C2:H5} is  not valid.}}\\ \hline
\tabhead{{}}
& \tabhead{$p=2$} &\tabhead{$p=3$}& \tabhead{$p=4$} & \tabhead{$p=5$} &\tabhead{$p=6$} & \tabhead{$p=7$} & \tabhead{$p=8$}&\tabhead{$p=9$}\\
\midrule
$f(x)=x$ & 8.51e-5& 1.756e-5& 6.374e-5& 6.616e-5 &1.943e-4 &6.087e-5 & 1.69e-5&2.31e-5 \\
$f(x)=x^2$ &1.538e-4 &4.004e-5 &1.187e-4 &1.334e-4 &4.363e-4 &2.421e-4 &3.981e-5 &4.204e-5  \\
$f(x)=\frac1x$ & 5.64e-5& 2.389e-4& 4.294e-5&2.704e-4 &1.394e-4 &2.57e-4 &1.724e-4 & 7.968e-5 \\
$f(x)=e^{-x^2}$ & 1.623e-5& 1.033e-4&1.396e-4 &1.372e-4 &3.481e-5 &1.251e-4 &7.891e-5 &7.133e-5  \\ \hline
\multicolumn{9}{c}{\tabhead{Case 7: $(B_0,B_1,B_2,\sigma,\alpha) = (0,0,\frac25,\frac{1}{10},3)$ and \ref{C2:H5} is valid.}}\\
\hline
\tabhead{{}}
& \tabhead{$p=2$} &\tabhead{$p=3$}& \tabhead{$p=4$} & \tabhead{$p=5$} &\tabhead{$p=6$} & \tabhead{$p=7$} & \tabhead{$p=8$}&\tabhead{$p=9$}\\
\midrule
$f(x)=x$ & 1.698e-2& 7.814e-3& 3.881e-3& 1.844e-3&9.058e-4 & 4.943e-4&2.266e-4 &8.054e-5\\
$f(x)=x^2$ & 2.608e-2& 1.207e-2&6.018e-3 &2.864e-3 &1.405e-3 &7.688e-4 &3.515e-4 &1.489e-4\\
$f(x)=\frac1x$ & 2.902e-2&1.323e-2 & 6.337e-3&3.156e-3 & 1.387e-3& 9.119e-4& 3.679e-4&2.059e-4 \\
$f(x)=e^{-x^2}$ & 1.436e-2& 6.571e-3&3.232e-3 &1.505e-3 &7.1e-4 &3.609e-4 &1.341e-4 &1.122e-5 \\ 
\bottomrule
\end{tabular}
}}
\caption{\label{tab:C2experiment1-sup} Observed numerical weak error $|\EE[f(X_T)] - \frac{1}{n}\sum_{i=1}^{n}f(\overline{X}_T(\omega_i, 2^{-p}))|$ for {\bf Case 1} to {\bf Case 7}.}
\end{table}

\subsection{Some comparison with other schemes} \label{sec:comparison}
One of the significant advantages of the proposed exp-ES is that it easily addresses the control of the moments of the numerical approximation (see Lemma \ref{leq:Schememoments}).
We seek now to (numerically) observe the stability of the exponential-Euler scheme. To this aim, we compare the  exp-ES with the four others following schemes proposed in the literature:
\begin{itemize}
\item Symmetrized Euler scheme (SES) (see e.g. \cite{BD15} and the reference therein), defined  by
\[\overline{X}_{t_{n+1}}=|\overline{X}_{t_{n}}-B_2\overline{X}_{t_{n}}^{2\alpha-1}\Delta t + \sigma \overline{X}_{t_{n}}^\alpha (W_{t_{n+1}}-W_{t_n})|,\]
which is the closest form of the classical Euler scheme to be applied to SDE \eqref{eq:proto_CEV_with_const}.

\item Symmetrized Milstein scheme (SMS) (see e.g. \cite{B17} and the reference therein), defined by
\[\overline{X}_{t_{n+1}}=|\overline{X}_{t_{n}}-B_2\overline{X}_{t_{n}}^{2\alpha-1}\Delta t + \sigma \overline{X}_{t_{n}}^\alpha (W_{t_{n+1}}-W_{t_n})+\alpha\sigma^2 \overline{X}_{t_{n}}^{2\alpha-1} \left((W_{t_{n+1}}-W_{t_n})^2-\Delta t\right)|.\]
\item Tamed Euler scheme  (TES, see \cite{Kloeden2}), defined  by
\[\overline{X}_{t_{n+1}}=\overline{X}_{t_{n}}-\tfrac{B_2\overline{X}_{t_{n}}^{2\alpha-1}\Delta t}{1+B_2|\overline{X}_{t_{n}}^{2\alpha-1}|\Delta t} + \sigma \overline{X}_{t_{n}}^\alpha (W_{t_{n+1}}-W_{t_n}).\]
\item Stopped tamed Euler scheme (STES, see \cite{Jentzen-b} and the reference therein) defined  by
\[\overline{X}_{t_{n+1}}=\overline{X}_{t_{n}}+\tfrac{-B_2\overline{X}_{t_{n}}^{2\alpha-1}\Delta t+ \sigma \overline{X}_{t_{n}}^\alpha (W_{t_{n+1}}-W_{t_n})}{1+\big(B_2\overline{X}_{t_{n}}^{2\alpha-1}\Delta t+ \sigma \overline{X}_{t_{n}}^\alpha (W_{t_{n+1}}-W_{t_n})\big)^2} {\bf 1}_{\big\{|\overline{X}_{t_n}|<\exp\{\sqrt{|\ln(\Delta t)|}\}\big\}}.\]
\end{itemize}

Results are shown in Figure \ref{im:Comp} and Table \ref{tab:Comp}.
Table \ref{tab:Comp}  reports on the stability of the exp-ES, in comparison with the other  schemes. In particular, we experiment some instability with the tamed schemes when $\Delta t$ is not small enough (marked in Table \ref{tab:Comp} as {-} for the missing values). We also observe abnormally large level of errors for SES ({\bf Case 1}) and SMS ({\bf Cases 1, 3, 5}) when $\Delta t$ is not small enough as well.

In terms of convergence rate, the scheme exp-ES behaves very well, in the average of the other schemes, and even better in {\bf Cases 2, 3, 5}. On the contrary, {bf Case 3} (where the explicit Euler scheme is strongly diverging) is particularly unstable for the SMS and TES, STES. The same behavior with a smaller impact is observed in {bf Case 5}.

\begin{table}[H]
{\footnotesize{
\begin{tabular}{c| l l l l l l l l}
\toprule
&\multicolumn{8}{c}{Observed weak Error with $\Delta t={2^{-p}}$ }\\
\hline
\tabhead{{\small Cases $(B_2,\sigma,\alpha$)}}& \tabhead{$p=1$}
& \tabhead{$p=2$} &\tabhead{$p=3$}& \tabhead{$p=4$} & \tabhead{$p=5$} &\tabhead{$p=6$} & \tabhead{$p=7$} & \tabhead{$p=8$}\\
\midrule
{\small Case 1  $(2,\frac{1}{10},\frac32)$}&\multicolumn{8}{c}{ }\\
\cellcolor{blue!35}{\small\bf{exp-ES}}  & 7.866e-2 & 3.402e-2 & 1.6e-2 & 7.829e-3& 3.939e-3&1.918e-3 &8.92e-4 &4.774e-4 \\
{\bf{ SES}} & 1.557e-4 & 2.866e-4 & 2.573e-4&1.328e-4 &1.747e-4 & 4.402e-4& 4.66e-4&1.602e-4 \\
{\bf{SMS}} & 9.462e-2 &2.272e-2& 3.028e-2&  \cellcolor{red!25} 1.065e+17& 1.480e-2&5.901e-3 &3.103e-3 &9.497e-4  \\
{\bf{STES}} & 6.939e-2 & 5.43e-2 & 2.969e-2& 1.518e-2&7.906e-3 &3.383e-3 &2.111e-3 &6.74e-4 \\
{\bf{ TES }} & 3.419e-2 & 2.12e-2& 1.14e-2 &6.091e-3 &3.109e-3 &1.759e-3 &7.997e-4 &3.626e-4  \\
\hline
{\small Case 2  $(3,1,\frac54)$} & & &  &  & & & & \\
\cellcolor{blue!35}{\small\bf{exp-ES}}& 4.655e-2 & 2.237e-2& 1.05e-2&5.263e-3 &2.872e-3 &1.489e-3 &2.852e-4 &2.458e-4 \\
{\bf{ SES}} &3.815e-1 & 5.788e-2&4.519e-2 &2.219e-2 &1.048e-2 &4.736e-3 &2.434e-3 &1.128e-3 \\
{\bf{ SMS}} &2.813e-1 & 7.577e-2& 3.312e-2&1.59e-2 &7.82e-3 &3.798e-3 &1.86e-3 &9.431e-4 \\
{\bf{STES }} &\cellcolor{red!25} - & \cellcolor{red!25}-&\cellcolor{red!25}- &\cellcolor{red!25}- &6.596e-3 &3.411e-3 &5.594e-4 &1.895e-3 \\
{\bf{ TES }} & \cellcolor{red!25}- &\cellcolor{red!25}- &\cellcolor{red!25}- &\cellcolor{red!25}- & 4.292e-3 & 1.177e-3& 8.831e-4&2.887e-4  \\
\hline
{\small Case 3  $(1,1,\frac32)$}&\multicolumn{8}{c}{ }\\
\cellcolor{blue!35}{\small\bf{exp-ES}}& 1.195e-2 & 6.353e-3&2.94e-3 &1.523e-3 &8.059e-4 &5.286e-4 &3.573e-6 &2.319e-5 \\
{\bf{ SES }} & 6.306e-3 & 6.306e-3& 6.384e-3& 6.571e-3& 6.308e-3& 6.416e-3&6.008e-3&5.995e-3 \\
{\bf{ SMS }} & 5.396e-2& 1.567e-1 & \cellcolor{red!25}   1.841e+6 &1.411e-1 &1.243e-1 &1.178e-1 &1.134e-1 &1.116e-1 \\
{\bf{ STES }} & \cellcolor{red!25}- &\cellcolor{red!25} -&\cellcolor{red!25}- &\cellcolor{red!25}- &\cellcolor{red!25}- &5.979e-3 & 8.053e-3& 1.758e-3\\
{\bf{ TES }} & \cellcolor{red!25}- & \cellcolor{red!25}-& \cellcolor{red!25}-&\cellcolor{red!25}- &\cellcolor{red!25}-& \cellcolor{red!25}-&\cellcolor{red!25}- &  2.715e-3\\ \hline
{\small Case 5  $(10,\tfrac{1}{2},\tfrac98)$}&\multicolumn{8}{c}{ }\\
\cellcolor{blue!35}{\small\bf{exp-ES}}& 4.980e-3 &2.739e-3&1.416e-3 &7.226e-4 &3.707e-4 &1.837e-4 &8.583e-5 &4.565e-5 \\
{\bf{ SES}} & \cellcolor{red!25}24.287 & \cellcolor{red!25}17.878& 4.936e-3& 3.262e-3& 1.746e-3& 9.092e-4&4.598e-4 &2.655e-4 \\
{\bf{ SMS}} & \cellcolor{red!25}25.272 & \cellcolor{red!25}20.77& 4.968e-3& 3.556e-3& 2.156e-3& 1.45e-3& 1.336e-3&1.654e-3 \\
{\bf{STES }} & 5.49e-1&\cellcolor{red!25}- &\cellcolor{red!25}- &2.849e-3 &1.652e-3 & 8.828e-4& 4.905e-4&2.324e-4\\
{\bf{ TES }} & \cellcolor{red!25} - & \cellcolor{red!25} -& \cellcolor{red!25} - &2.252e-3 &1.21e-3 &6.29e-4 &3.17e-4 &1.938e-4  \\
\hline
{\small Case 6  $(\frac{1}{100},\frac{1}{10},\frac54)$}&\multicolumn{8}{c}{ }\\
\cellcolor{blue!35}{\small\bf{exp-ES}}& 4.728e-3 & 5.047e-3&4.656e-3 &4.998e-3 &5.406e-3 &4.959e-3 &4.587e-3 &4.917e-3 \\
{\bf{ SES}} & 9.382e-1 & 7.76e-2& 7.229e-2& 5.053e-2& 3.962e-2&3.5e-2 &3.231e-2 &3.109e-2 \\
{\bf{ SMS}} & 2.816e-1 & 7.682e-2& 3.409e-2& 1.674e-2& 8.629e-3& 4.738e-3& 2.724e-3&1.783e-3 \\
{\bf{ STES }} & 8.355e-4 & 8.966e-4& 6.832e-4&2.909e-4 &1.913e-3 &8.137e-4 &2.142e-4 &3.546e-4 \\
{\bf{ TES }} & 1.582e-4 & 1.222e-3&1.698e-3 &3.282e-4 &7.02e-5 &8.031e-4 &9.383e-4 &9.324e-4  \\
\hline
\bottomrule
\end{tabular}
}}
\caption{\label{tab:Comp} Comparison of the weak approximation error for test function $f(x)=x$. The comparison consider the following numerical schemes: exponential Euler, Symmetrized Euler and  Milstein  schemes, Tamed and Stopped  Tamed Euler schemes. }
\end{table}

\begin{figure}[H]
\centering
\subfigure[Case 1: $(2,\tfrac{1}{10},\tfrac32)$.]{\includegraphics[width=0.49\textwidth]{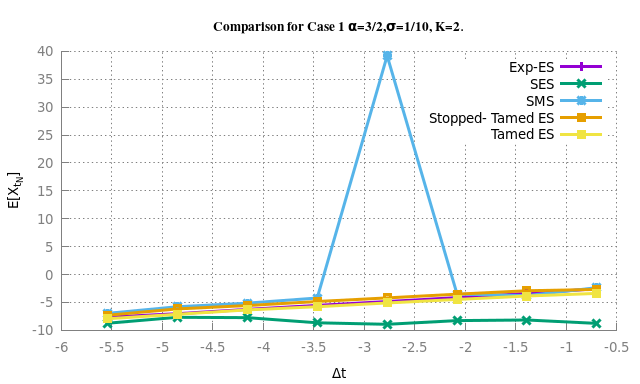}\label{im:comp1}}
\subfigure[Case 2: $(3,1,\frac54)$.]{\includegraphics[width=0.49\textwidth]{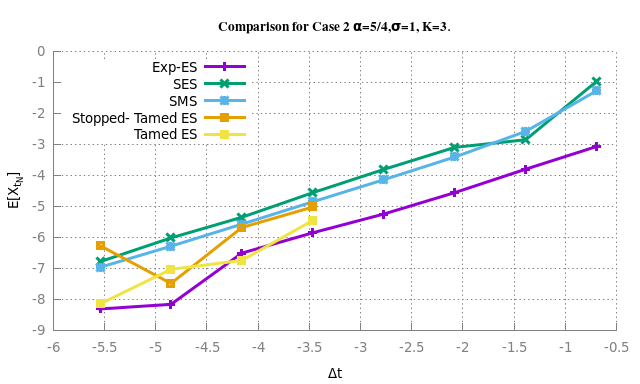}\label{im:comp3}}\\
\subfigure[Case 3 : $(1,1,\tfrac32)$.]{\includegraphics[width=0.49\textwidth]{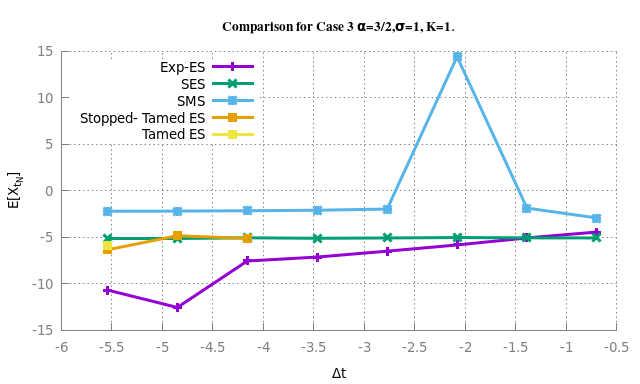}\label{im:comp5}}
\subfigure[Case 5: $(10,\tfrac{1}{2},\tfrac98)$.]{\includegraphics[width=0.49\textwidth]{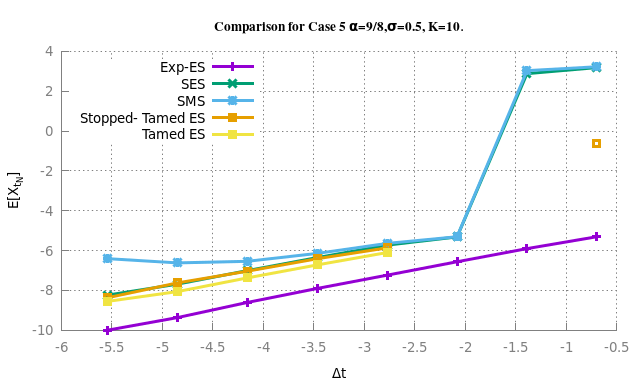}\label{im:comp2}}\\
\subfigure[Case 6: $(\tfrac{1}{100},\tfrac{1}{10},\tfrac54)$.]{\includegraphics[width=0.49\textwidth]{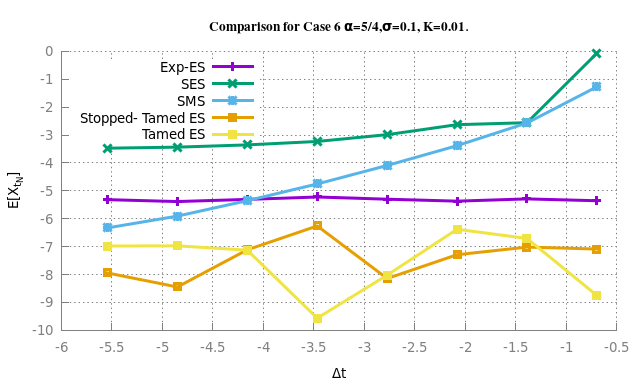}\label{im:comp4}}
\caption{\label{im:Comp} Weak approximation error for the exponential-Euler scheme in {\bf Cases 1, 2, 3, 5, 6} (in log-log scale). The weak error for the   exponential-Euler scheme (purple) is compared with the weak error for the SES (green) and the SMS (blue).}
\end{figure}

\small

\end{document}